\newtheorem{thm}{Theorem}[section]
\newtheorem{cor}[thm]{Corollary}
\newtheorem{lem}[thm]{Lemma}
\newtheorem{exm}[thm]{Example}
\newtheorem{prop}[thm]{Proposition}
\newtheorem{defn}[thm]{Definition}
\newtheorem{rem}[thm]{Remark}
\newtheorem{que}[thm]{Question}
\begin{document}

\begin{center}
{\Large \bf Relative cluster tilting objects in triangulated categories\footnote{Supported by the NSF of China (Grants 11671221) }}

\bigskip

{\large Wuzhong Yang and Bin Zhu}
\bigskip

\end{center}

\def\s{\stackrel}
\def\Longrightarrow{{\longrightarrow}}
\def\A{\mathcal{A}}
\def\B{\mathcal{B}}
\def\C{\mathcal{C}}
\def\D{\mathcal{D}}
\def\T{\mathcal{T}}
\def\R{\mathcal{R}}
\def\S{\mathcal{S}}
\def\H{\mathcal{H}}
\def\U{\mathscr{U}}
\def\V{\mathscr{V}}
\def\W{\mathscr{W}}
\def\X{\mathscr{X}}
\def\Y{\mathscr{Y}}
\def\Z{\mathcal {Z}}
\def\I{\mathcal {I}}
\def\Aut{\mbox{Aut}}
\def\coker{\mbox{coker}}
\def\deg{\mbox{deg}}
\def\dim{\mbox{dim}}
\def\Ext{\mbox{Ext}}
\def\Hom{\mbox{Hom}}
\def\Gr{\mbox{Gr}}
\def\id{\mbox{id}}
\def\Im{\mbox{Im}}
\def\ind{\mbox{ind}}
\def\Int{\mbox{Int}}
\def\ggz{\Gamma}
\def\la{\Lambda}
\def\bz{\beta}
\def\az{\alpha}
\def\gz{\gamma}
\def\da{\delta}
\def\fs{{\mathfrak{S}}}
\def\ff{{\mathfrak{F}}}
\def\zz{\zeta}
\def\thz{\theta}
\def\raw{\rightarrow}
\def\ole{\overline}
\def\cat{\C_{F^m}(\H)}
\def\fun{F_\la}
\def\sttm{\mbox{s}\tau\mbox{-tilt}\la}
\def\wte{\widetilde}
\def \text{\mbox}

\hyphenation{ap-pro-xi-ma-tion}

\newcommand{\add}{\mathsf{add}\hspace{.01in}}
\newcommand{\Fac}{\mathsf{Fac}\hspace{.01in}}
\newcommand{\End}{\operatorname{End}\nolimits}
\renewcommand{\mod}{\mathsf{mod}\hspace{.01in}}
\newcommand{\proj}{\mathsf{proj}\hspace{.01in}}

\begin{abstract}
Assume that $\D$ is a Krull-Schmidt, Hom-finite triangulated category with a Serre functor and a cluster-tilting object $T$. We introduce the notion of relative cluster tilting objects, and $T[1]$-cluster tilting objects in $\D$, which are a generalization of cluster-tilting objects. When $\D$ is $2$-Calabi-Yau, the relative cluster tilting objects are cluster-tilting. Let $\la={\rm End}^{op}_{\D}(T)$ be the opposite algebra of the endomorphism algebra of $T$. We show that there exists a bijection between $T[1]$-cluster tilting objects in $\D$ and support $\tau$-tilting $\la$-modules, which generalizes a result of Adachi-Iyama-Reiten \cite{AIR}. We develop a basic theory on $T[1]$-cluster tilting objects. In particular, we introduce a partial order on the set of $T[1]$-cluster tilting objects and mutation of $T[1]$-cluster tilting objects, which can be regarded as a generalization of `cluster-tilting mutation'. As an application, we give a partial answer to a question posed in \cite{AIR}.
\end{abstract}

\textbf{Key words.} cluster-tilting objects; support $\tau$-tilting modules; relative cluster tilting objects; mutations.
\medskip

\section{Introduction}\label{sect:1}

Cluster-tilting objects in a triangulated category $\D$ were introduced in \cite{BMRRT, BMR, IY, KR, KZ}. When $\D$ is a cluster category or more general, a $2$-Calabi-Yau ($2$-CY for short) triangulated category, they play a crucial role in the categorification of cluster algebras, and they correspond to the clusters \cite{K2}.
\medskip

Cluster algebras were introduced by Fomin and Zelevinsky in \cite{FZ}. There has been a vast literature to establish connections with  representation theory of finite dimensional algebras. Marsh, Reineke and Zelevinsky made a first attempt to understand cluter algebras in terms of the representation theory of quivers in \cite{MRZ}. Immediately following this, Buan, Marsh, Reiten, Reineke and Todorov in \cite{BMRRT, K1} invented cluster categories (see also \cite{CCS} for type $A_n$). This led to develop a theory, namely cluster-tilting theory, and yielded a categorification of acyclic cluster algebras. At the same time, Gei{\ss}, Leclerc and Schr\"{o}er \cite{GLS1, GLS2} studied cluster-tilting objects in module categories over preprojective algebras and gave a categorification of certain cyclic cluster algebras.  Cluster categories and the stable module categories of preprojective algebras of Dynkin quivers are examples of $2$-CY triangulated categories.
\medskip

Cluster-tilting objects have many nice properties. A fruitful theory about them has been developed in last ten years, see for example, \cite{KR}, \cite{BIRS} for cluster-tilting in $2$-CY triangulated categories; \cite{KZ}, \cite{IY}, \cite{B} for cluster-tilting in triangulated categories. One of the important properties of cluster-tilting objects in $2$-CY triangulated categories is that when we remove some direct summand $T_i$ from cluster-tilting object $T=T_1\oplus T_2\oplus \cdots \oplus T_n$ to get $T/T_i=\oplus_{j\neq i}T_j$ (which is called an almost complete cluster-tilting object), then there is exactly one indecomposable object $T_i^*$ such that $T_i^*\ncong T_i$ and $T/T_i\oplus T_i^*$ is a cluster-tilting object, which is called the mutation of $T$ at $T_i$. Mutation of cluster-tilting objects in $2$-CY triangulated categories were defined in \cite{BMRRT, IY} and studied by many authors after them, it corresponds to the mutation of clusters in the categorification of cluster algebras. But the mutation of cluster-tilting objects in triangulated categories which are not $2$-CY, is not always possible, see for example Section II1 in \cite{BIRS}, and see examples in Section \ref{sect:5}. To make mutation always possible, it is desirable to enlarge the class of cluster-tilting objects to get the more general property that almost complete ones always have two complements.
\medskip

Cluster-tilted algebras were introduced by Buan, Marsh and Reiten in \cite{BMR}, which are by definition, the endomorphism algebras of cluster-tilting objects in cluster categories. It was proved that the module category of a cluster-tilted algebra is equivalent to the quotient category of cluster category by the cluster-tilting object \cite{BMR}. One can also study the endomorphism algebra of a cluster-tilting object in a triangulated category, the equivalence above still holds in this general case \cite{KR, KZ, IY}. Under this equivalence, one can ask the problem whether a tilting module over the endomorphism algebra of a cluster-tilting object can be lifted to a cluster-tilting object in the triangulated category. Smith \cite{Smi} and Fu-Liu \cite{FL} proved that it is always true for cluster categories and $2$-CY triangulated categories. Holm-J{\o}rgensen \cite{HJ} and Beligiannis \cite{B} proved it is true when the global dimension of endomorphism algebra is finite. More recently, Adachi-Iyama-Reiten \cite{AIR} introduced the $\tau$-tilting modules for any finite dimensional algebra. Assume that $\D$ is a $2$-CY triangulated category with a cluster-tilting objects $T$. In \cite{AIR}, the authors  established a bijection between cluster-tilting objects in $\D$ and support $\tau$-tilting modules in $\mod\End^{op}_{\D}(T)$ (see also \cite{CZZ, YZZ} for various version of the bijection). Unfortunately, many examples (see for example Section \ref{sect:5}, and see Example \ref{exm:2.15}) indicate that this result does not hold if $\D$ is not 2-CY. It is then reasonable to find a class of objects in $\D$ which correspond to support $\tau$-tilting modules in $\mod\End^{op}_{\D}(T)$ bijectively in more general setting.
\medskip

For these purposes, we introduce the notion of relative cluster tilting objects in a triangulated category $\D$, which are a generalization of cluster-tilting objects. For an object $M$ in $\D$, we use $[M](X, Y)$ to denote the subgroup of $\Hom_{\D}(X, Y)$ consisting of the morphisms from $X$ to $Y$ factoring through $\add M$. In this way, we define an ideal $[M[1]](-, -)$ of $\D$ if $M$ is a cluster-tilting object, which is called ghost ideal of $\D$ in \cite{B}.
\begin{defn}
Let $\D$ be a triangulated category with cluster-tilting objects.
\begin{itemize}
\item An object $X$ in $\D$ is called {\rm relative rigid} if there exists a cluster-tilting object $T$ such that $[T[1]](X, X[1])=0$. In this case, $X$ is also called $T[1]$-{\rm rigid}.    %%fac thr 的定义
\item An object $X$ in $\D$ is called {\rm relative cluster tilting} (respectively, {\rm almost relative cluster tilting}) if there exists a cluster-tilting object $T$ such that $X$ is $T[1]$-rigid and $|X|=|T|$ (respectively, $|X|=|T|-1$), where $|X|$ denotes the number of non-isomorphic indecomposable direct summands of $X$. In this case, $X$ is also called $T[1]$-{\rm cluster tilting} (respectively, {\rm almost} $T[1]$-{\rm cluster tilting}).
\end{itemize}
\end{defn}
For a cluster-tilting object $T$, we introduce a partial order on the set of basic $T[1]$-cluster tilting objects and get the first main result of this paper.
\begin{thm}\label{thm:intro1}(see Theorem \ref{thm:mainthm} and Proposition \ref{prop:twoposets} for details).
Let $\D$ be a triangulated category with a Serre functor and a cluster-tilting object $T$,  and let $\la={\rm End}^{op}_{\D}(T)$. Then there is an order-preserving bijection between
the set of isomorphism classes of basic $T[1]$-cluster tilting objects in $\D$ and the set of isomorphism classes of basic support $\tau$-tilting $\la$-modules.
\end{thm}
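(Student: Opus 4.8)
The plan is to transport the statement across the functor $F=\Hom_{\D}(T,-)\colon\D\to\mod\la$. Recall from \cite{KR,KZ,IY} that $F$ is full, carries $\add T$ onto $\proj\la$ and $\add T[1]$ onto $0$, has kernel ideal exactly $[T[1]](-,-)$, and induces an equivalence $\D/[T[1]]\simeq\mod\la$; since $\Ext^1_{\D}(T,T)=0$ this gives $[T[1]](T',X)=0$ and hence $\Hom_{\la}(FT',FX)\cong\Hom_{\D}(T',X)$ for every $T'\in\add T$ and every $X\in\D$, and it identifies the kernel of $F$ on objects with $\add T[1]$. Given a basic $T[1]$-rigid object $X$, Krull--Schmidt lets us write $X=X_{0}\oplus T'[1]$ with $T'\in\add T$ and $X_{0}$ having no indecomposable summand in $\add T[1]$, the two pieces being determined up to isomorphism. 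The candidate bijection is $\Phi\colon X\mapsto(FX_{0},FT')$, a pair consisting of a $\la$-module and a projective $\la$-module; by \cite{AIR} such pairs correspond bijectively to support $\tau$-tilting modules once one checks that they are support $\tau$-tilting pairs.

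First I would establish the rigidity translation: $X$ is $T[1]$-rigid if and only if $(FX_{0},FT')$ is a $\tau$-rigid pair, that is, $FX_{0}$ is $\tau$-rigid and $\Hom_{\la}(FT',FX_{0})=0$. Choose an $\add T$-resolution $T_{1}\xrightarrow{g}T_{0}\xrightarrow{h}X_{0}\to T_{1}[1]$, available since $T$ is cluster tilting; applying $F$ gives a projective presentation $FT_{1}\xrightarrow{Fg}FT_{0}\to FX_{0}\to0$. Applying $\Hom_{\D}(-,X_{0}[1])$ to the triangle, and using $\Ext^1_{\D}(T,T)=0$, one checks that $[T[1]](X_{0},X_{0}[1])$ is the image of $\Hom_{\D}(T_{1}[1],X_{0}[1])\cong\Hom_{\la}(FT_{1},FX_{0})$ in $\Hom_{\D}(X_{0},X_{0}[1])$, and that this image is the cokernel of $(Fg)^{*}\colon\Hom_{\la}(FT_{0},FX_{0})\to\Hom_{\la}(FT_{1},FX_{0})$; by \cite{AIR} that cokernel is $D\Hom_{\la}(FX_{0},\tau FX_{0})$, hence it vanishes exactly when $FX_{0}$ is $\tau$-rigid. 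The mixed components behave as expected: $[T[1]](T'[1],X_{0}[1])\cong\Hom_{\D}(T',X_{0})\cong\Hom_{\la}(FT',FX_{0})$, while $[T[1]](X_{0},T'[2])$ and $[T[1]](T'[1],T'[2])$ vanish automatically, again because $\Ext^1_{\D}(T,T)=0$. This proves the equivalence.

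Next I would deal with cardinalities and bijectivity. If $X_{0}$ has no $\add T[1]$-summand then $[T[1]](X_{0},X_{0})\subseteq\operatorname{rad}\End_{\D}(X_{0})$, since otherwise a morphism factoring through $\add T[1]$ would restrict to an isomorphism on some indecomposable summand of $X_{0}$, exhibiting that summand as a summand of an object of $\add T[1]$. Therefore $\End_{\la}(FX_{0})=\End_{\D}(X_{0})/[T[1]](X_{0},X_{0})$ has the same idempotents modulo radical as $\End_{\D}(X_{0})$, so $|FX_{0}|=|X_{0}|$, and likewise $F$ is injective on isomorphism classes of $T[1]$-rigid objects without $\add T[1]$-summands. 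Since $|FT'|=|T'|$ and $|\la|=|T|$, the object $X$ is $T[1]$-cluster tilting iff $(FX_{0},FT')$ is a $\tau$-rigid pair with $|FX_{0}|+|FT'|=|\la|$, which by \cite{AIR} is precisely a support $\tau$-tilting pair; thus $\Phi$ is a well-defined injection into basic support $\tau$-tilting $\la$-modules. For surjectivity, given a support $\tau$-tilting pair $(M,P)$ one chooses $X_{0}\in\D$ with no $\add T[1]$-summand and $FX_{0}\cong M$ (possible by density of $F$ and the description of its kernel on objects), chooses $T'\in\add T$ with $FT'\cong P$, and puts $X=X_{0}\oplus T'[1]$; the rigidity translation and the cardinality count then show $X$ is $T[1]$-cluster tilting with $\Phi(X)\cong(M,P)$. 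The Serre functor --- equivalently, the existence of Auslander--Reiten triangles in $\D$ --- enters here, to control such lifts and to run the relative Bongartz-type completion ensuring $X_{0}$ can be taken with the right number of summands. This lifting step is the main obstacle, because $F$ is far from an equivalence on $\D$ itself and the projective ``support'' summands must be matched exactly.

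Finally, order-preservation. The partial order on basic $T[1]$-cluster tilting objects defined in the paper is set up to mirror the Adachi--Iyama--Reiten order $(M,P)\ge(N,Q)\iff\Fac M\supseteq\Fac N$ on support $\tau$-tilting modules; with $\Phi$ and its inverse available, one unwinds both definitions, using that $F$ sends $\add T$-resolutions to projective presentations and carries the relevant generation/$\Fac$ conditions back and forth, to conclude $X\ge Y\iff\Phi(X)\ge\Phi(Y)$. Combined with the bijectivity above this gives the order-preserving bijection of Theorem~\ref{thm:mainthm}, and Proposition~\ref{prop:twoposets} is then the formal assertion that $\Phi$ is an isomorphism of the two posets.
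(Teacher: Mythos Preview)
Your rigidity translation is correct and is a genuinely different route from the paper's. The paper establishes $[T[1]](X',X[1])\simeq D\Hom_\la(\overline{X'},\tau\overline{X'})$ by invoking Serre duality in $\D$: it uses the Palu-type isomorphism $\Hom_{\D/[T]}(\tau_\D^{-1}Y,X)\simeq D[T](X[-1],Y)$ (Lemma~\ref{Palu}) together with the compatibility $\overline{\tau_\D M}\simeq\tau\overline{M}$ from \cite{IY} and the auto-equivalence $\tau_\D^{-1}[1]$ fixing $\add T$. You bypass all of this by reading off a projective presentation $FT_1\to FT_0\to FX_0\to 0$ from the approximation triangle and identifying $[T[1]](X_0,X_0[1])$ with $\mathrm{coker}\big(\Hom_\la(FT_0,FX_0)\to\Hom_\la(FT_1,FX_0)\big)$, which by the standard Auslander--Reiten formula (this is essentially \cite[Proposition~2.4]{AIR}) vanishes iff $FX_0$ is $\tau$-rigid. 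This is cleaner and, notably, does not use the Serre functor on $\D$ at all.

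That last point makes your remark about the Serre functor misplaced. Surjectivity of $\Phi$ needs no ``Bongartz-type completion'' and no Serre duality: once the rigidity translation and the cardinality count $|X_0|=|FX_0|$, $|T'|=|FT'|$, $|\la|=|T|$ are in hand, density of $F$ and Krull--Schmidt immediately lift any support $\tau$-tilting pair $(M,P)$ to a $T[1]$-cluster tilting object $X_0\oplus T'[1]$. So your argument actually shows that the bijection in Theorem~\ref{thm:mainthm}(a)--(c) holds without the Serre-functor hypothesis, which is a small improvement you did not flag; the paper's proof, by contrast, uses $\mathbb{S}$ essentially.

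Your treatment of the order is too sketchy. The substantive content is the paper's Lemma~\ref{lem:suffnece}: for any $M,X\in\D$ one has $X\in M\ast[T[1]]$ if and only if $\overline{X'}\in\Fac\overline{M'}$. This is proved directly from the equivalence $\D/[T[1]]\simeq\mod\la$ and the cluster-tilting property (no Serre functor), and immediately yields Proposition~\ref{prop:twoposets}. You should state and prove this lemma rather than asserting that the two orders ``unwind'' to each other.
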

If $\D$ is a 2-CY triangulated category, then it turns out that $T[1]$-cluster tilting objects are precisely cluster-tilting objects. Thus this theorem improves a result in \cite{AIR}. Furthermore, we introduce mutation of relative cluster tilting objects. The second main result of this paper is the following, which is a generalization of a result in \cite{BMRRT, IY}.%
\begin{thm}\label{thm:intro2}(see Corollary \ref{cor:maincor1} and Theorem \ref{thm:mutation} for details).
Let $\D$ be a triangulated category with a Serre functor and a cluster-tilting object $T$. Then any basic almost $T[1]$-cluster tilting object in $\D$ has exactly two non-isomorphic indecomposable complements, and they are related by exchange triangles.
\end{thm}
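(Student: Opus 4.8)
The statement splits into the ``exactly two complements'' assertion (Corollary~\ref{cor:maincor1}) and the ``related by exchange triangles'' assertion (Theorem~\ref{thm:mutation}). The plan is to deduce the first from the bijection of Theorem~\ref{thm:mainthm} together with the mutation theory of support $\tau$-tilting $\la$-modules of \cite{AIR}, and then to realise the resulting mutation by explicit triangles in $\D$, imitating the $2$-CY constructions of \cite{BMRRT, IY}.

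\emph{Step 1 (two complements).} Let $F=\Hom_\D(T,-)\colon\D\to\mod\la$; note $F(\add T[1])=0$ since $T$ is rigid. Under the correspondence behind Theorem~\ref{thm:mainthm}, a $T[1]$-rigid object $X$ is sent to a $\tau$-rigid pair $(FX,P_X)$, the indecomposable summands of $X$ lying in $\add T[1]$ being exactly those annihilated by $F$ and contributing to the support part $P_X$. Hence a basic almost $T[1]$-cluster tilting object $\ole X$ corresponds to a basic \emph{almost complete} support $\tau$-tilting pair, and completing $\ole X$ by an indecomposable complement $Y$ to a $T[1]$-cluster tilting object corresponds exactly to completing that pair to a support $\tau$-tilting pair (enlarging $F\ole X$ if $Y\notin\add T[1]$, enlarging the support part otherwise). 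By \cite{AIR} a basic almost complete support $\tau$-tilting pair is a direct summand of precisely two basic support $\tau$-tilting pairs; since the bijection of Theorem~\ref{thm:mainthm} is injective, $\ole X$ has precisely two indecomposable complements $X^\ast\not\cong X^{\ast\ast}$.

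\emph{Step 2 (exchange triangles).} Given a complement $X^\ast$, I would take a minimal left $\add\ole X$-approximation $X^\ast\xrightarrow{\,f\,}B_1$ and complete it to a triangle
$$X^\ast\xrightarrow{\ f\ }B_1\xrightarrow{\ g\ }Y\xrightarrow{\ h\ }X^\ast[1],$$
and dually form $Y'\to B_0\to X^\ast\to Y'[1]$ from a minimal right $\add\ole X$-approximation, with $B_0,B_1\in\add\ole X$. The two points to check are that $\ole X\oplus Y$ is again $T[1]$-rigid, and that $Y$ is indecomposable (so that $|\ole X\oplus Y|=|T|$). The first would follow from the octahedral axiom, the vanishing $[T[1]]\bigl(\ole X\oplus X^\ast,(\ole X\oplus X^\ast)[1]\bigr)=0$, and the elementary properties of the ghost ideal $[T[1]]$ established earlier. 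For the second, applying the homological functor $F$ to the triangle and using $F(\add T[1])=0$ identifies the relevant piece with the exchange sequence of the associated support $\tau$-tilting mutation, whose new summand is indecomposable or zero by \cite{AIR}; when $FY=0$ one gets $Y\in\add T[1]$, which is indecomposable because $|\ole X\oplus Y|=|T|$. Granting this, $\ole X\oplus Y$ is $T[1]$-cluster tilting, so by Corollary~\ref{cor:maincor1} it is $\ole X\oplus X^\ast$ or $\ole X\oplus X^{\ast\ast}$; and $Y\not\cong X^\ast$ since $f$ cannot be a split monomorphism ($X^\ast\notin\add\ole X$). Hence $Y\cong X^{\ast\ast}\cong Y'$, giving the exchange triangles
$$X^\ast\to B_1\to X^{\ast\ast}\to X^\ast[1],\qquad X^{\ast\ast}\to B_0\to X^\ast\to X^{\ast\ast}[1].$$

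The main obstacle is the indecomposability claim in Step~2 together with the degenerate cases in which the ``classical'' mutation is unavailable: when $X^\ast\in\add T$ the cone of $f$ may lie in $\add T[1]$ (so that $X^{\ast\ast}\cong X^\ast[1]$), and such cases must be isolated and treated by hand, using that precisely these objects are killed by $F$ and that $[T[1]]$ is completely understood on $\add(T\cup T[1])$. Making the comparison between the triangle in $\D$ and the exchange sequence of \cite{AIR} precise — which summands of $B_0,B_1$ are forced into $\add T[1]$, and why minimality of the approximations is preserved — is the technical heart; once it is in place, both assertions of the theorem follow.
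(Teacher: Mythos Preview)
Your Step~1 is correct and matches the paper's Corollary~\ref{cor:maincor1}(3). The gap is in Step~2, in the indecomposability of $Y$. You propose to deduce it by applying $F=\Hom_\D(T,-)$ and identifying $FY$ with the cokernel of the exchange sequence in $\mod\la$, ``whose new summand is indecomposable or zero by \cite{AIR}''. But \cite{AIR} does \emph{not} prove that this cokernel $Y_\la$ is indecomposable: Theorem~2.30 there (restated here as Theorem~\ref{thm:Bongartz}) only shows $Y_\la$ is a direct sum of copies of an indecomposable, and whether it is itself indecomposable is precisely Question~2.28 of \cite{AIR} (Question~\ref{que:air} here). The paper answers that question for endomorphism algebras of cluster-tilting objects as a \emph{corollary} of Theorem~\ref{thm:mutation} (Corollary~\ref{cor:ques}), so your argument is circular. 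Moreover, even if $FY$ were indecomposable, $Y$ could still acquire extra summands in $\add T[1]$, and your handling of that case (``indecomposable because $|\ole X\oplus Y|=|T|$'') presupposes what you are trying to prove.

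The paper instead establishes indecomposability of $Y$ entirely inside $\D$: Lemma~\ref{lem:minapproximation} upgrades $g$ in $(\star)$ from a minimal left $(\add U)$-approximation to a minimal left $(\add M)$-approximation, and then a direct-sum splitting argument (Lemma~\ref{lem:complement}) forces $Y$ indecomposable and $Y\notin\add M$. A second point you should be aware of: the paper does \emph{not} take both a left and a right approximation of the same complement and show the two cones agree. Which approximation produces the other complement depends on whether $X\in U\ast[T[1]]$; the key Lemma~\ref{lem:facthr} (that the connecting map $h$ factors through $\add T[1]$, which drives all the $T[1]$-rigidity diagram chases) uses this hypothesis. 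Without the case split your rigidity argument via ``the octahedral axiom and elementary properties of the ghost ideal'' does not go through as stated.
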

In the last part of this paper, we give an application. In \cite{AIR}, the authors gave a method to calculate left mutation of support $\tau$-tilting modules by exchange sequences and raised a question about exchange sequences (Question 2.28 in \cite{AIR}). For this question, we first give a relationship between exchange sequences and the exchange triangles in Theorem \ref{thm:intro2} and then give a partial answer.

\medskip

The paper is organized as follows. In Section \ref{sect:2}, we review some elementary definitions and facts that we need to use, including cluster-tilting objects and support $\tau$-tilting modules. In Section \ref{sect:3}, we first give some basic properties of relative cluster tilting objects, then we state and prove our first main result. In Section \ref{sect:4}, we introduce mutation of relative cluster tilting objects and prove our second main result. As an application, we give a partial answer to Question 2.28 in \cite{AIR}. In the last section, we present some examples.
\medskip

We end this section with some conventions. Throughout this article, $k$ is an algebraically closed field and $D=${\rm Hom}$_k(-,k)$ is the $k-$duality. All modules we consider in this paper are left modules. For a finite dimensional algebra $\la$, $\mod\la$ denotes the category of finitely generated left $\la$-modules, and $\proj\la$ denotes the subcategory of $\mod\la$ consisting of projective $\la$-modules. For any triangulated category $\D$, we assume that it is $k$-linear, Hom-finite, and satisfies the Krull-Remak-Schmidt property \cite{H}. In $\D$, we denote the shift functor by $[1]$ and define Ext$_{\D}^i(X, Y)\coloneqq $Hom$_{\D}(X,$ $Y[i])$ for any objects $X$ and $Y$. For simplicity, we use $\D(X, Y)$ or $(X, Y)$ to denote the set of morphisms from $X$ to $Y$ in $\D$. If $\T$ is a subcategory of $\D$, then we always assume that $\T$ is a full subcategory which is closed under taking isomorphisms, direct sums and direct summands. For three objects $M, X$ and $Y$ in $\D$, we denote by $\add M$ the full subcategory of $\D$ consisting of direct summands of direct sum of finitely many copies of $M$, and denote by $[M](X, Y)$ the subgroup of $\Hom_\D(X, Y)$ consisting of morphisms which factor through objects in $\add M$. The quotient category $\D/[M]$ of $\D$ is a category with the same objects as $\D$ and the space of morphisms from $X$ to $Y$ is the quotient of group of morphisms from $X$ to $Y$ in $\D$ by the subgroup consisting of morphisms factor through objects in $\add M$. For two morphisms $f:M\rightarrow N$ and $g:N\rightarrow L$, the composition of $f$ and $g$ is denoted by $gf:M\rightarrow L$.
\section{Preliminaries}\label{sect:2}
In this section, we recall some definitions and results that will be used in this paper.

\subsection{Support $\tau$-tilting modules}
Let $\la$ be a finite dimensional $k$-algebra and $\tau$ be the Auslander-Reiten translation. Support $\tau$-tilting modules were introduced by Adachi, Iyama and Reiten \cite{AIR}, which can be regarded as a generalization of tilting modules.
%(Definitions 0.1, 0.3 in \cite{AIR}).
\begin{defn}Let $(X, P)$ be a pair with $X\in \mod\la$ and $P\in \proj\la$.
\begin{enumerate}
\item We say that $(X, P)$ is {\rm basic} if $X$ and $P$ are basic.
\item $X$ is called $\tau${\rm -rigid} if {\rm Hom}$_{\la}(X,\tau X)=0$. $(X, P)$ is called a $\tau${\rm -rigid pair} if $X$ is $\tau$-rigid and {\rm Hom}$_{\la}(P, X)=0$. %\vskip-2ex
\item $X$ is called $\tau${\rm -tilting} if $X$ is $\tau$-rigid and $|X|=|\la|$.
\item A $\tau$-rigid pair $(X, P)$ is said to be a {\rm support $\tau$-tilting} (respectively, {\rm almost support $\tau$-tilting}) pair if $|X|+|P|=|\la|$ (respectively, $|X|+|P|=|\la|-1$). If $(X, P)$ is a support $\tau$-tilting pair, then $X$ is called a {\rm support $\tau$-tilting module}.
\item Let $(X', P')$ be a pair with $X'\in \mod\la$ and $P'\in \proj\la$. We say that $(X,P)$ is a direct summand of $(X',P')$ if $X$ is a direct summand of $X'$ and $P$ is a direct summand  of $P'$.
\end{enumerate}
%\vskip-2ex  调整文中行距
%\vspace{-1ex}  行距
\end{defn}
Throughout this paper, we denote by $\tau$-tilt$\la$ (respectively, $\sttm$) the set of isomorphism classes of basic $\tau$-tilting (respectively, support $\tau$-tilting) $\la$-modules, and by $\tau$-rigid$\la$ the set of isomorphism classes of basic $\tau$-rigid pairs of $\la$. The following observation is basic in $\tau$-tilting theory.
%\medskip
%(Proposition 2.3 in \cite{AIR}).
\begin{prop}\label{uniqueness}\cite[Proposition 2.3]{AIR}
Let $(X, P)$ be a basic pair with $X\in \mod\la$ and $P=\la e\in \proj\la$, where $e$ is an idempotent of $\la$.
\begin{enumerate}
\item[(a)] $(X, P)$ is a $\tau$-rigid (respectively, support $\tau$-tilting) pair for $\la$ if and only if $X$ is a $\tau$-rigid (respectively, $\tau$-tilting) $(\la/\la e\la)$-module.
\item[(b)] Let $(X, P)$ be a support $\tau$-tilting pair for $\la$. Then $P$ is determined by $X$ uniquely. This means that if $(X, P)$ and $(X, Q)$ are basic support $\tau$-tilting pairs for $\la$, then $P\simeq Q$.
\end{enumerate}
\end{prop}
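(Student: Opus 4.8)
The plan is to deduce both parts from the projective-presentation criterion for $\tau$-rigidity, reducing assertions about pairs over $\la$ to assertions about genuine (support) $\tau$-tilting modules over $B:=\la/\la e\la$. Write $P=\la e$. The starting point is the elementary identification $\Hom_\la(P,X)\cong eX$ together with $eX=0\iff\la e\la\,X=0\iff X\in\mod B$; so the condition $\Hom_\la(P,X)=0$ in the definition of a $\tau$-rigid pair is exactly the requirement that $X$ be a $B$-module. When $P$ is basic one also has $|\la|=|P|+|B|$: writing $e=e_{i_1}+\cdots+e_{i_k}$ with the $e_{i_j}$ primitive orthogonal, $P=\bigoplus_j\la e_{i_j}$ has $k=|P|$ summands while the simple $B$-modules are the $S_i$ with $i\notin\{i_1,\dots,i_k\}$, so $|B|=|\la|-k$. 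Hence $|X|+|P|=|\la|$ is equivalent to $|X|=|B|$, and part (a), in both the $\tau$-rigid and the support-$\tau$-tilting form, reduces to the single claim: \emph{for $X\in\mod B$, $X$ is $\tau_\la$-rigid if and only if $X$ is $\tau_B$-rigid.}

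To prove this comparison I would invoke the well-known description (see \cite{AIR}): over any finite-dimensional algebra $\Gamma$, a module $M$ with minimal projective presentation $Q_1\xrightarrow{q}Q_0\to M\to0$ is $\tau_\Gamma$-rigid if and only if $\Hom_\Gamma(q,M)\colon\Hom_\Gamma(Q_0,M)\to\Hom_\Gamma(Q_1,M)$ is surjective, equivalently $\Hom_\Gamma(M,\tau_\Gamma M)\cong D\coker\Hom_\Gamma(q,M)$ via the Nakayama functor. Take a minimal projective presentation $P_1\xrightarrow{p}P_0\to X\to0$ in $\mod\la$ and apply $B\otimes_\la-$; since $\la e\la\,X=0$ we get $B\otimes_\la X\cong X$, hence a projective presentation $B\otimes_\la P_1\xrightarrow{1\otimes p}B\otimes_\la P_0\to X\to0$ of $X$ in $\mod B$. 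The crux is that this presentation is again minimal. The summands of $P_0$ lie at vertices outside $\{i_1,\dots,i_k\}$ (being a projective cover of the $B$-module $X$), so $\la e\la\,P_0\subseteq\mathrm{rad}\,P_0$ and, moreover, $\mathrm{top}(\la e\la\,P_0)$ is concentrated on the vertices $i_1,\dots,i_k$ — a generator of $\la e\la\,e_i$ is a path from $i$ into $\{i_1,\dots,i_k\}$, hence lives at such a vertex inside $\la e_i$. Feeding this into the exact sequence $0\to\la e\la\,P_0\to\Omega_\la X\to\Omega_B X\to0$ (with $\Omega_\la X=\ker(P_0\to X)$ and $\Omega_B X=\ker(B\otimes_\la P_0\to X)$) and passing to tops, one sees that the summands of $P_1$ killed by $B\otimes_\la-$ account for exactly the part of $\mathrm{top}\,\Omega_\la X$ lying in the kernel of $\Omega_\la X\twoheadrightarrow\Omega_B X$, so $B\otimes_\la P_1$ is a projective cover of $\Omega_B X$ and the $B$-presentation is minimal. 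Finally the natural adjunction $\Hom_\la(Q,X)\cong\Hom_B(B\otimes_\la Q,X)$, valid for $X\in\mod B$, identifies $\Hom_\la(p,X)$ with $\Hom_B(1\otimes p,X)$, so one is surjective exactly when the other is. This verification of minimality — essentially that $\la e\la\,P_0$ sits deep enough in $\mathrm{rad}\,P_0$ — is the step I expect to be the main obstacle.

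For part (b), let $(X,P)$ and $(X,Q)$ be basic support $\tau$-tilting pairs with $P=\la e$, $Q=\la f$. By (a), $X$ is a $\tau$-tilting module over both $\la/\la e\la$ and $\la/\la f\la$; and a $\tau$-tilting module over an algebra $\Gamma$ is sincere — if $e_iX=0$ for some vertex $i$ then $X\in\mod(\Gamma/\Gamma e_i\Gamma)$ and is $\tau$-rigid there by the comparison above, whereas $|X|=|\Gamma|>|\Gamma/\Gamma e_i\Gamma|$ contradicts the bound $|X|\le|\Gamma/\Gamma e_i\Gamma|$ for $\tau$-rigid modules \cite{AIR}. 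Since $\Hom_\la(P,X)=0$ gives $e_iX=0$ for every $i\in\mathrm{supp}(e)$, and $X$ is sincere as a $\la/\la f\la$-module, no $i\in\mathrm{supp}(e)$ is a vertex of $\la/\la f\la$; thus $\mathrm{supp}(e)\subseteq\mathrm{supp}(f)$, and symmetrically $\mathrm{supp}(e)=\mathrm{supp}(f)$. As $P$ and $Q$ are basic, $P=\bigoplus_{i\in\mathrm{supp}(e)}\la e_i=\bigoplus_{i\in\mathrm{supp}(f)}\la e_i=Q$, which is the asserted uniqueness.
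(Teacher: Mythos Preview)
The paper does not prove this proposition: it is stated in Section~\ref{sect:2} as a preliminary result quoted verbatim from \cite[Proposition~2.3]{AIR}, with no argument supplied. There is therefore no proof in the present paper to compare your attempt against.

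That said, your proposal is essentially a correct reconstruction of one standard route to the result. The reduction of the pair condition $\Hom_\la(P,X)=0$ to $X\in\mod B$ and the count $|\la|=|P|+|B|$ are routine. The substantive step is the comparison $\Hom_\la(X,\tau_\la X)=0\Leftrightarrow\Hom_B(X,\tau_B X)=0$ for $X\in\mod B$, which you attack via minimal projective presentations and the adjunction $\Hom_\la(P_i,X)\cong\Hom_B(B\otimes_\la P_i,X)$. Your minimality argument is correct but could be phrased more cleanly: from the exact sequence $0\to\la e\la\,P_0\to\Omega_\la X\to\Omega_B X\to 0$ one gets a right-exact sequence on tops, and since $\mathrm{top}(\la e\la\,P_0)$ is supported on the vertices of $e$ while $\mathrm{top}(\Omega_B X)$ is supported on the complement, the map $\mathrm{top}(\Omega_\la X)\to\mathrm{top}(\Omega_B X)$ restricts to an isomorphism on the part of $\mathrm{top}(\Omega_\la X)$ supported away from $e$. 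This part is exactly $\mathrm{top}(B\otimes_\la P_1)$, so $B\otimes_\la P_1$ is a projective cover of $\Omega_B X$, as required. (One point you gloss over: the kernel of $\mathrm{top}(\Omega_\la X)\to\mathrm{top}(\Omega_B X)$ is the \emph{image} of $\mathrm{top}(\la e\la\,P_0)$, not $\mathrm{top}(\la e\la\,P_0)$ itself; but since you only need that this kernel is supported on the vertices of $e$, the conclusion is unaffected.) Your argument for part~(b) via sincerity of $\tau$-tilting modules is also sound.
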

For $\tau$-tilting modules, we have the analog of the Bongartz completion of tilting modules.
\begin{thm}\label{thm:tauBongartz}
\cite[Theorem 2.9]{AIR} Let $X$ be a $\tau$-rigid $\la$-module. Then there exists a $\la$-module $V$ such that $X\oplus V$ is a $\tau$-tilting $\la$-module.
\end{thm}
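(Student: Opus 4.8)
The plan is to mimic the classical Bongartz construction, but carried out inside the module category with $\tau$ playing the role that $\Ext^1$ plays for tilting modules. Given a $\tau$-rigid module $X$, the strategy is to build a single auxiliary module $V$ by a universal extension construction and then verify directly that $X\oplus V$ is $\tau$-rigid of the right rank. First I would recall the functorial characterization of $\tau$-rigidity that replaces the symmetry of $\Ext^1$: by the Auslander--Reiten formulas, $\Hom_\la(Y,\tau X)=0$ is equivalent to $\Ext^1_\la(X, \Fac Y)=0$ (vanishing on the subcategory of factor modules of $Y$), or dually can be phrased via $\overline{\Hom}$. Concretely, for a $\tau$-rigid $X$ one has $\Hom_\la(X,\tau X)=0$, and the key input is the (AIR) lemma that $X$ is $\tau$-rigid iff $\Ext^1_\la(X,\Fac X)=0$. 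This asymmetry is exactly why the naive dualization of the tilting argument needs care.

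Next I would perform the universal extension. Let $P_1\xrightarrow{\ f\ }P_0\to X\to 0$ be a minimal projective presentation, and consider the left $\End_\la(X)$-module (or the $k$-space) $\Ext^1_\la(X,\la)$; form the universal extension
\[
0\longrightarrow \la \longrightarrow E \longrightarrow X^{(d)}\longrightarrow 0
\]
where $d=\dim_k\Ext^1_\la(X,\la)$ and the connecting class generates $\Ext^1_\la(X,\la)$ as desired, so that $\Ext^1_\la(X,E)\to \Ext^1_\la(X,X^{(d)})$ becomes... — actually the cleaner route, and the one I would follow, is the AR-translate version: set $V$ to be a module fitting into an exact sequence obtained by taking the trace of $X$ with respect to the relevant subcategory, and put $U=X\oplus E$ (or $X\oplus E/\la$ after removing superfluous projective summands). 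One then checks $\Hom_\la(U,\tau U)=0$ by a diagram chase on the defining sequence, using $\tau$-rigidity of $X$ and the universal property to kill $\Hom_\la(U,\tau X)$ and $\Hom_\la(E,\tau E)$ in turn. The rank count $|U|+|P|=|\la|$ (equivalently $|U|=|\la|$ after discarding the projective part, Proposition 2.3) follows because the construction produces a sincere-enough module: one shows that $\Fac U$ is a torsion class containing $\la$, and a $\tau$-rigid module generating a torsion class that contains every indecomposable projective (up to the support) must be $\tau$-tilting, invoking the numerical criterion $|X|\le|\la|$ for $\tau$-rigid $X$ together with maximality of the torsion class $\Fac U$.

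The main obstacle will be making the universal extension step genuinely work in the $\tau$-context rather than merely transcribing the $\Ext$-based Bongartz argument: the functor $\Hom_\la(-,\tau X)$ is not exact and $\tau$ does not commute with the relevant operations, so one cannot simply say "$\Ext^1$ vanishes on the subcategory generated by $E$." The correct device is to work with the torsion pair $(\Fac X{-}\text{related class},\ \cdot)$ and the characterization of $\tau$-rigidity via $\Fac$: one takes the Bongartz completion to be the module $U$ with $\Fac U$ equal to the smallest torsion class containing $X$ that is contained in ${}^\perp(\tau X)$-analogue, and then the real work is proving (i) such a torsion class has an Ext-projective generator $U$ with $X\in\add U$, and (ii) $U$ is $\tau$-rigid with $|U|=|\la|$. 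I expect step (ii)'s rank equality to be where the argument is most delicate, requiring the finiteness of the number of indecomposable Ext-projectives in a functorially finite torsion class and the Auslander--Smalø theory of such torsion classes; this is the crux that forces the hypotheses and is where I would spend the most effort.
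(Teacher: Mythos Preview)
The paper does not prove this statement; it is quoted without proof as a preliminary result from \cite{AIR}. So there is no ``paper's own proof'' to compare against here, and your proposal is attempting something the present paper simply imports.

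That said, regarding the content of your sketch: your second route---taking the torsion class ${}^{\perp}(\tau X)$, showing it is functorially finite, and setting the completion to be its Ext-projective generator $P({}^{\perp}(\tau X))$---is exactly the argument in \cite{AIR}. Your identification of the crux (functorial finiteness via Auslander--Smal{\o}, and the rank count from the bijection between $\sttm$ and ${\rm f}\mbox{-}{\rm tors}\la$) is accurate. Your first route via a literal universal extension $0\to\la\to E\to X^{(d)}\to 0$ is, as you yourself sense, not the right framework here: the issue is not just that $\Hom_\la(-,\tau X)$ fails to be exact, but that $\tau$-rigidity of $E$ does not follow from such a sequence without already invoking the torsion-theoretic description, so that path ultimately collapses into the second one anyway. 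If you want a self-contained write-up, commit to the torsion-class argument from the start and drop the universal-extension preamble.
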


The notion of mutation was also introduced in \cite{AIR}.
\begin{defn}
Two basic support $\tau$-tilting pairs $(T, P)$ and $(T', P')$ for $\la$ are said to be {\rm mutation} of each other if there exists a basic almost support $\tau$-tilting pair $(U, Q)$ which is a direct summand of $(T, P)$ and $(T', P')$. In this case we write $T'=\mu_X(T)$ if $X$ is an
indecomposable $\la$-module satisfying either $T = U\oplus X$ or $P = Q\oplus X$.
\end{defn}
The following result shows that support $\tau$-tilting modules `complete' tilting modules from the viewpoint of mutation.
\begin{thm}\label{thm:mutationsupport}\cite[Theorem 2.17]{AIR}
Let $\la$ be a finite dimensional $k$-algebra. Then any basic almost support $\tau$-tilting
pair $(U,Q)$ for $\la$ is a direct summand of exactly two basic support $\tau$-tilting pairs $(T, P)$ and $(T', P')$
for $\la$. Moreover we have $\{\Fac T, \Fac T'\}=\{\Fac U, ^{\perp}(\tau U)\cap Q^{\perp}\}$.
\end{thm}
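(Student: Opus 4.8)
\medskip

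The plan is to translate the statement into the language of torsion classes in $\mod\la$ and then run a counting argument. Recall from the $\tau$-tilting package of \cite{AIR} that $(T,P)\mapsto\Fac T$ is a bijection between the set of basic support $\tau$-tilting pairs for $\la$ and the set of functorially finite torsion classes of $\mod\la$; under it, $\add T$ is the subcategory of Ext-projective objects of $\Fac T$ and $P$ records the indecomposable projectives annihilated by $\Fac T$. Since $U$ is $\tau$-rigid, $\Fac U$ is a functorially finite torsion class, and the Bongartz-type completion produces a largest functorially finite torsion class associated to $(U,Q)$, which one checks equals $\mathcal{T}^{B}:={}^{\perp}(\tau U)\cap Q^{\perp}$ -- Theorem~\ref{thm:tauBongartz} being the module-level shadow of this. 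So the first step is to verify that $\Fac U$ and $\mathcal{T}^B$ are functorially finite torsion classes and that the two support $\tau$-tilting pairs $(T,P),(T',P')$ they determine each have $(U,Q)$ as a direct summand: for $\Fac U$ one uses that $U$ is Ext-projective in $\Fac U$ and that $\Hom_\la(Q,M)=0$ for every $M\in\Fac U$ (as $Q$ is projective with $\Hom_\la(Q,U)=0$); for $\mathcal{T}^B$ this is the defining property of the Bongartz completion.

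The second step is the sandwiching: if $(S,R)$ is \emph{any} basic support $\tau$-tilting pair having $(U,Q)$ as a direct summand, then $\Fac U\subseteq\Fac S\subseteq\mathcal{T}^B$. The left inclusion holds because $U\in\add S$; for the right inclusion, $\tau$-rigidity of $S$ gives $\Fac S\subseteq{}^{\perp}(\tau S)\subseteq{}^{\perp}(\tau U)$, while $Q\in\add R$ forces $\Fac S\subseteq Q^{\perp}$. Via the bijection this identifies the completions of $(U,Q)$ with the functorially finite torsion classes lying in the interval $[\Fac U,\mathcal{T}^B]$, so the whole theorem reduces to showing that this interval contains exactly two functorially finite torsion classes, namely its endpoints.

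That reduction is the heart of the matter and the step I expect to be the main obstacle. The size bookkeeping $|S|+|R|=|\la|$ against $|U|+|Q|=|\la|-1$ shows that passing from $(U,Q)$ to a completion means adjoining a single indecomposable -- either an indecomposable non-projective module $X\notin\add U$ with $(U\oplus X,Q)$ support $\tau$-tilting, or an indecomposable projective $P_i\notin\add Q$ with $(U,Q\oplus P_i)$ support $\tau$-tilting. To exhibit the two completions explicitly, and to rule out any third, I would use the exchange construction: in the generic case where the completion with torsion class $\Fac U$ is $(U\oplus X,Q)$ with $X$ a nonzero module, take a minimal left $\add U$-approximation $g\colon X\to U^{0}$ and set $Y:=\coker g$; a computation combining the approximation property with the Auslander--Reiten formula (equivalently, with the characterization of $\tau$-rigidity through $\Ext^{1}$ and $\Fac$ from \cite{AIR}) shows that $U\oplus Y$ is $\tau$-rigid of the right cardinality, hence -- with the forced projective part -- is the second completion, and moreover $\Fac(U\oplus Y)=\mathcal{T}^{B}$. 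Uniqueness then follows because, via that same approximation triangle, any indecomposable $X'\notin\add U$ with $U\oplus X'$ $\tau$-rigid is forced to be $X$ or $Y$; the degenerate situations -- where $U$ fails to be sincere, or where the missing summand of the pair is a projective rather than a module -- are disposed of by a parallel, easier argument in which one completion adjoins an indecomposable projective. Finally, the ``Moreover'' clause is just the record of which endpoint each completion realizes: one has $\Fac=\Fac U$ and the other $\Fac=\mathcal{T}^{B}={}^{\perp}(\tau U)\cap Q^{\perp}$, so $\{\Fac T,\Fac T'\}=\{\Fac U,{}^{\perp}(\tau U)\cap Q^{\perp}\}$.
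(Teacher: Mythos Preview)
The paper does not prove this statement: Theorem~\ref{thm:mutationsupport} sits in the Preliminaries section and is merely quoted from \cite[Theorem~2.17]{AIR} without argument. So there is no ``paper's own proof'' to compare against; you are supplying an outline for a result the present authors take as a black box.

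As a sketch of the argument in \cite{AIR}, your proposal is on the right track in its architecture---the bijection with functorially finite torsion classes, the sandwich $\Fac U\subseteq\Fac S\subseteq{}^{\perp}(\tau U)\cap Q^{\perp}$, and the identification of completions of $(U,Q)$ with functorially finite torsion classes in that interval are all correct and are indeed how \cite{AIR} organizes the proof. One point to tighten: your claim that ``any indecomposable $X'\notin\add U$ with $U\oplus X'$ $\tau$-rigid is forced to be $X$ or $Y$'' is stronger than what you need and is not obviously true as stated; there can be many indecomposables $X'$ with $U\oplus X'$ $\tau$-rigid. What you actually need is that among completions to a full support $\tau$-tilting pair (so with the projective part taken into account) there are only two, and in \cite{AIR} this is obtained not by directly classifying the possible $X'$, but by showing that the interval $[\Fac U,\,{}^{\perp}(\tau U)\cap Q^{\perp}]$ in the lattice of functorially finite torsion classes has no intermediate elements. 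Your exchange-sequence idea is relevant, but it is used in \cite{AIR} to produce the second completion and to compare the two, rather than to rule out a third directly.
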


\subsection{Functorially finite torsion classes}
Let $\la$ be a finite dimensional $k$-algebra and $\tau$ be the Auslander-Reiten translation. We denote by $\mathsf{K}^{{\rm b}}(\proj\la)$ the homotopy category of bounded complexes of finitely generated projective $\la$-modules. We recall the definition of functorially finite torsion classes.
\medskip

We say that a full subcategory $\T$ of $\mod\la$ is a \textit{torsion class} if it is closed under factor modules and extensions and an object $X$ in $\T$ is Ext-\textit{projective} if Ext$_{\la}^1(X, -)|_{\T}=0$. We denote by $P(\T)$ the direct sum of one copy of each of the indecomposable Ext-projective objects in $\T$ up to isomorphism \cite{Ho, Sma}.
\medskip

Let $X$ be a module in $\mod\la$. A morphism $f:T_0\rightarrow X$ is called a \textit{right $\T$-approximation} of $X$ if $T_0\in \T$ and Hom$_{\la}(-, f)|_{\T}$ is surjective. If any module in $\mod\la$ has a right $\T$-approximation, we call $\T$ \textit{contravariantly finite} in $\mod\la$. Dually, a \textit{left $\T$-approximation} and a \textit{covariantly finite subcategory} are defined. We say that $\T$ is \textit{functorially finite} if it is both covariantly finite and contravariantly finite.
\medskip

We denote by f-tors$\la$ the set of functorially finite torsion classes in $\mod\la$. The following result gives a relationship between support $\tau$-tilting $\la$-modules and functorially finite torsion classes in $\mod\la$.
\begin{thm}\label{thm:f-tors}
\cite[Theorem 2.6]{AIR} There is a bijection
$${\rm s}\tau\mbox{-}{\rm tilt}\la \longleftrightarrow {\rm f}\mbox{-}{\rm tors}\la$$
given by ${\rm s}\tau\mbox{-}{\rm tilt}\la\ni M\mapsto \Fac M\in {\rm f}\mbox{-}{\rm tors}\la$ and ${\rm f}\mbox{-}{\rm tors}\la\ni \T\mapsto P(\T)\in {\rm s}\tau\mbox{-}{\rm tilt}\la$, where $\Fac M$ is the subcategory of $\mod\la$ consisting of all objects which are factor modules of finite direct sums of copies of $M$.
\end{thm}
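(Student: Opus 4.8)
\emph{Proof proposal.} The plan is to prove that $M\mapsto\Fac M$ carries $\sttm$ into the set of functorially finite torsion classes, that this assignment is injective with $\T\mapsto P(\T)$ as a left inverse, and that it is surjective; these three facts force the two displayed maps to be mutually inverse bijections. The underlying technical input is the interplay, available by Auslander--Reiten duality $\Ext^1_\la(X,Y)\cong D\,\overline{\Hom}_\la(Y,\tau X)$, between $\tau$-rigidity of a module $M$ and the vanishing of $\Ext^1_\la(M,\Fac M)$: these two conditions are equivalent, and moreover an $\mathrm{Ext}$-projective object of any torsion class is $\tau$-rigid while $\Fac M$ is a functorially finite torsion class for every $\tau$-rigid $M$ (all standard $\tau$-tilting theory, see \cite{AIR}). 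Indeed, the implication ``$M$ $\tau$-rigid $\Rightarrow\Ext^1_\la(M,\Fac M)=0$'' is immediate: for $Y\in\Fac M$, precomposition with a surjection $M^n\twoheadrightarrow Y$ embeds $\Hom_\la(Y,\tau M)$ into $\Hom_\la(M,\tau M)^n=0$, whence $\overline{\Hom}_\la(Y,\tau M)=0$.

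First I would establish well-definedness and injectivity. For $M\in\sttm$, the category $\Fac M$ is a functorially finite torsion class by the above. Since $M$ is $\mathrm{Ext}$-projective in $\Fac M$, $\add M\subseteq\add P(\Fac M)$. Conversely, suppose $N$ is an indecomposable $\mathrm{Ext}$-projective of $\Fac M$ with $N\notin\add M$. Then $M\oplus N\in\Fac M$ is again $\mathrm{Ext}$-projective in $\Fac M$ (both summands are), hence $\tau$-rigid; moreover $\Hom_\la(P,N)=0$ follows by applying the exact functor $\Hom_\la(P,-)$ to a surjection $M^k\twoheadrightarrow N$ (using $\Hom_\la(P,M)=0$), where $(M,P)$ is the support $\tau$-tilting pair of $M$. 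Thus $(M\oplus N,P)$ is a $\tau$-rigid pair with $|M\oplus N|+|P|=|\la|+1$, contradicting the bound $|X|+|Q|\le|\la|$ valid for every $\tau$-rigid pair $(X,Q)$ (a consequence of Theorem~\ref{thm:tauBongartz} and Proposition~\ref{uniqueness}). Hence $P(\Fac M)=M$, and $M\mapsto\Fac M$ is injective.

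For surjectivity, let $\T$ be a functorially finite torsion class. By the theory of functorially finite subcategories \cite{Sma}, $\T$ has enough $\mathrm{Ext}$-projectives and is generated by them, so $\Fac P(\T)=\T$; and $P(\T)$, being $\mathrm{Ext}$-projective in the torsion class $\T$, is $\tau$-rigid. It remains to produce a projective $Q$ for which $(P(\T),Q)$ is a \emph{support} $\tau$-tilting pair. By Proposition~\ref{uniqueness}(a), taking $Q=\la e$ with $e$ chosen maximal among idempotents satisfying $eP(\T)=0$, this reduces to the single numerical identity
$$|P(\T)| \;=\; \#\{\, \mbox{simple }\la\mbox{-modules occurring as composition factors of }P(\T)\,\}.$$

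This identity is the crux of the argument and the step I expect to be hardest: everything above is a reorganization of classical torsion-theoretic facts, whereas this count is exactly what distinguishes \emph{support} $\tau$-tilting modules among arbitrary $\mathrm{Ext}$-projective generators, and it uses functorial finiteness of $\T$ in an essential way. I would prove it either (i) $K$-theoretically, attaching to each $X\in\mod\la$ the class $[P_0]-[P_1]\in K_0(\mathsf{K}^{\mathrm{b}}(\proj\la))$ of a minimal projective presentation $P_1\to P_0\to X\to 0$ and showing that the classes arising from the indecomposable summands of $P(\T)$, together with those of the indecomposable summands of $Q$, form a $\mathbb{Z}$-basis (forcing $|P(\T)|+|Q|=|\la|$); or (ii) by reducing, via Proposition~\ref{uniqueness} and the Bongartz completion Theorem~\ref{thm:tauBongartz}, to the algebra $\la/\la e\la$ over which $P(\T)$ is sincere, where the identity specializes to the classical count governing tilting modules \cite{Sma}. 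Granting it, $P(\T)\in\sttm$ with $\Fac P(\T)=\T$, so $M\mapsto\Fac M$ is surjective; combined with $P(\Fac M)=M$, the two maps of the statement are mutually inverse bijections.
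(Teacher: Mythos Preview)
The paper does not prove this theorem at all: it is stated with a citation to \cite[Theorem~2.6]{AIR} and used thereafter as a black box, so there is no ``paper's own proof'' to compare your proposal against. Your outline is a reasonable sketch of how the result is established in \cite{AIR} itself---well-definedness and $P(\Fac M)=M$ via the bound $|X|+|Q|\le|\la|$, surjectivity via Auslander--Smal{\o} and a counting argument---but for the purposes of this paper no proof is expected or needed.
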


Under this bijection, we can give a partial order on s$\tau$-{\rm tilt}$\la$.
\begin{defn}
For $M, N\in\ ${\rm s}$\tau$-{\rm tilt}$\la$, we write $M\geq N$ if $\ \Fac M\supseteq\Fac N$.
\end{defn}
The relation $\geq$ gives a partial order on s$\tau$-{\rm tilt}$\la$. The following proposition is very important for mutation of support $\tau$-tilting modules.
\begin{prop}\label{prop:criterion}
\cite[Definition-Proposition 2.26]{AIR} Let $T=X\oplus U$ and $T'$ be support $\tau$-tilting $\Lambda$-modules such that $T'=\mu_X(T)$ for some indecomposable $\Lambda$-module $X$. Then either $T>T'$ or $T'>T$. Moreover $T>T'$ (respectively, $T<T'$) if and only if $X\notin \Fac U$ (respectively, $X\in \Fac U$).
\end{prop}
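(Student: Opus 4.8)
The plan is to translate the statement into the language of functorially finite torsion classes via Theorem~\ref{thm:f-tors} and then read it off from the mutation formula of Theorem~\ref{thm:mutationsupport}. First I would fix the book\-keeping of pairs. Let $(T,P)$ be the basic support $\tau$-tilting pair with module part $T=X\oplus U$; by Proposition~\ref{uniqueness}(b) its projective part $P$ is determined by $T$. Since $X$ is indecomposable, $(U,P)$ is a basic \emph{almost} support $\tau$-tilting pair which is a direct summand of $(T,P)$, and, because $T'=\mu_X(T)$, the pair $(T',P')$ is the other completion of $(U,P)$, so it too has $(U,P)$ as a direct summand. By Theorem~\ref{thm:mutationsupport}, $(U,P)$ is a direct summand of exactly two basic support $\tau$-tilting pairs; these are necessarily $(T,P)$ and $(T',P')$, which are therefore distinct, and
\[
\{\Fac T,\ \Fac T'\}=\{\Fac U,\ {}^{\perp}(\tau U)\cap P^{\perp}\}.
\]
Since $M\mapsto\Fac M$ is injective by Theorem~\ref{thm:f-tors}, it follows that $\Fac T\neq\Fac T'$.

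Next I would establish the inclusion $\Fac U\subseteq{}^{\perp}(\tau U)\cap P^{\perp}$. Applying $\Hom_{\la}(-,\tau U)$ to short exact sequences shows that ${}^{\perp}(\tau U)=\{M:\Hom_{\la}(M,\tau U)=0\}$ is closed under quotients and extensions, hence is a torsion class; likewise $P^{\perp}=\{M:\Hom_{\la}(P,M)=0\}$ is a torsion class, where closure under quotients uses the projectivity of $P$. Both subcategories contain $U$: indeed $\Hom_{\la}(U,\tau U)=0$ since $U$ is $\tau$-rigid, and $\Hom_{\la}(P,U)=0$ since $(U,P)$ is a $\tau$-rigid pair. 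As any torsion class containing $U$ contains every quotient of every power of $U$, it contains $\Fac U$, which gives the inclusion. In particular one of $\Fac T,\Fac T'$ is contained in the other, so together with $\Fac T\neq\Fac T'$ we conclude that either $T>T'$ or $T'>T$.

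It remains to decide which case occurs. If $X\notin\Fac U$, then $X\in\Fac T=\Fac(U\oplus X)$ forces $\Fac T\neq\Fac U$, so from the two-element set displayed above we must have $\Fac T={}^{\perp}(\tau U)\cap P^{\perp}$ and $\Fac T'=\Fac U$; combined with the inclusion just proved and $\Fac T\neq\Fac T'$ this yields $\Fac T\supsetneq\Fac T'$, i.e.\ $T>T'$. If instead $X\in\Fac U$, then every quotient of a power of $U\oplus X$ is a quotient of a power of $U$, so $\Fac T=\Fac(U\oplus X)=\Fac U$, whence $\Fac T'={}^{\perp}(\tau U)\cap P^{\perp}\supsetneq\Fac U=\Fac T$, i.e.\ $T<T'$. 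This is exactly the \emph{moreover} assertion.

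I do not expect a genuine obstacle here: the substantive content is already packaged in Theorem~\ref{thm:mutationsupport}, and this proposition is essentially a reformulation of it in terms of the partial order. The only points requiring care are the pair book\-keeping — matching up the projective parts and invoking Theorem~\ref{thm:mutationsupport} with the correct almost complete pair $(U,P)$ — and the routine verification that ${}^{\perp}(\tau U)$ and $P^{\perp}$ are torsion classes.
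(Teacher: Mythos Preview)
The paper does not supply its own proof of this proposition: it is quoted verbatim from \cite[Definition--Proposition~2.26]{AIR} in the preliminaries section and used as a black box thereafter. So there is no in-paper argument to compare against.

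Your proposed argument is correct and is in fact essentially how the result is derived in \cite{AIR}: the content is already in Theorem~\ref{thm:mutationsupport}, and one only has to (i) check the bookkeeping of pairs so that $(U,P)$ is the common almost complete direct summand, (ii) verify the inclusion $\Fac U\subseteq{}^{\perp}(\tau U)\cap P^{\perp}$, and (iii) decide which of the two torsion classes equals $\Fac T$ by testing whether $X\in\Fac U$. Your justifications for (ii) and (iii) are sound; the only cosmetic point is that you do not need to argue that ${}^{\perp}(\tau U)$ and $P^{\perp}$ are torsion classes in full---closure under quotients (which is what ``contains $\Fac U$ once it contains $U$'' uses) already suffices, and that follows immediately from the left-exactness of $\Hom_\la(-,\tau U)$ and the projectivity of $P$.
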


\subsection{Serre functors}
Following Bondal and Kapranov \cite{BK}, we give the definition of Serre functors.
\begin{defn}
Let $\D$ be a $k$-linear triangulated category with finite dimensional Hom-spaces. A Serre functor $\mathbb{S}:\D\rightarrow\D$ is a $k$-linear equivalence with bifunctorial isomorphisms
$${\rm Hom}_{\D}(A, B)\simeq D{\rm Hom}_{\D}(B, \mathbb{S}A)$$
for any $A, B\in \D$, where $D$ is the duality over $k$.%\vskip-2ex
\end{defn}
In \cite{RVdB}, Reiten and Van den Bergh proved that if $\D$ has a Serre functor $\mathbb{S}$, then $\D$ has Auslander-Reiten triangles. Moreover, if $\tau_\D$ is the Auslander-Reiten translation in $\D$, then $\mathbb{S}\simeq \tau_\D[1]$. We say that a triangulated category $\D$ with a Serre functor $\mathbb{S}$ is {\em $n$-Calabi-Yau} if $\mathbb{S}\simeq [n]$.

\subsection{Cluster-tilting objects}\label{subsect:cto}
Let $\D$ be a $k$-linear, Hom-finite triangulated category with a Serre functor $\mathbb{S}$. An important class of objects in $\D$ are the cluster-tilting objects, which have many nice properties. Following Iyama and Yoshino \cite{IY}, we give the definitions of cluster-tilting objects and related objects as follows.

\begin{defn}
\begin{enumerate}[(1)]
\item An object $T$ in $\D$ is called {\rm rigid} if ${\rm Ext}_{\D}^1(T, T)=0$.
\item An object $T$ in $\D$ is called {\rm maximal rigid} if it is rigid and maximal with respect to the property: $\add T=\{X\in\D\ |\ {\rm Ext}_{\D}^1(T\oplus X, T\oplus X)=0\}$.
\item An object $T$ in $\D$ is called {\rm cluster-tilting} if
$$\add T=\{X\in\D\ |\ {\rm Ext}_{\D}^1(T, X)=0\}=\{X\in\D\ |\ {\rm Ext}_{\D}^1(X, T)=0\}.$$
%In this situation, the algebra ${\rm End}^{op}_{\D}(T)$ is called a {\rm cluster-tilted algebra}.
\end{enumerate}
\end{defn}
Throughout this paper, we denote by rigid$\D$ (respectively, c-tilt$\D$) the set of isomorphism classes of basic rigid (respectively, cluster-tilting) objects in $\D$.
For two objects $X$ and $Y$ in $\D$, we define by $X\ast Y$ a full subcategory of $\D$ whose objects are all such $M\in \D$ with triangles $$X_0\longrightarrow M \longrightarrow Y_0 \longrightarrow X_0[1],$$
where $X_0\in \add X$ and $Y_0\in \add Y$.
Let $\tau_\D$ be the Auslander-Reiten translation in $\D$ and denote by $F=\tau_\D^{-1}[1]$. We have the following results, which will be used frequently in this paper.
\begin{thm}\label{FT}
\cite{IY, KZ} Let $T$ be a cluster-tilting object in $\D$. Then we have the following
\begin{enumerate}[(a)]
\item $\D=T\ast T[1]$.
\item $F$ is an auto-equivalence of $\D$ satisfying $FT\cong T$.
\end{enumerate}
\end{thm}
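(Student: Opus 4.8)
The plan is to treat the two parts separately. Part (a) will be a ``cone of an approximation'' argument that uses only the defining equality for a cluster-tilting object, and part (b) will combine the Serre functor with that same equality, followed by a short counting argument; I expect the only subtlety to be the Serre-duality bookkeeping in (b).

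For (a): I fix $M\in\D$. Since $\D$ is Hom-finite and $\add T$ has only finitely many indecomposable objects, $M$ admits a right $\add T$-approximation $g\colon T_0\to M$ with $T_0\in\add T$; I complete it to a triangle $T_0\xrightarrow{\,g\,}M\to N\to T_0[1]$. The claim is that $N\in\add T[1]$, equivalently $N[-1]\in\add T$; granting this, that same triangle (with $T_0\in\add T$ and $N\in\add T[1]$) exhibits $M$ as an object of $T\ast T[1]$. To prove the claim I apply $\Hom_\D(T,-)$ to the triangle: in the exact sequence $\Hom_\D(T,T_0)\to\Hom_\D(T,M)\to\Hom_\D(T,N)\to\Ext^1_\D(T,T_0)$ the first map is surjective because $g$ is a right $\add T$-approximation, and $\Ext^1_\D(T,T_0)=0$ because $T$ is rigid; hence $\Hom_\D(T,N)=0$, i.e.\ $\Ext^1_\D(T,N[-1])=0$. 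Since $\add T=\{X\in\D\mid\Ext^1_\D(T,X)=0\}$, this yields $N[-1]\in\add T$.

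For (b): from $\mathbb{S}\simeq\tau_\D[1]$ I get $\tau_\D\simeq\mathbb{S}[-1]$, so $\tau_\D$ is an auto-equivalence of $\D$, and hence so is $F=\tau_\D^{-1}[1]$. It then remains to prove $FT\cong T$. Using that $[1]$ and $\tau_\D^{-1}$ commute and that $[1]$ is an equivalence, $\Ext^1_\D(FT,T)=\Hom_\D\big((\tau_\D^{-1}T)[1],\,T[1]\big)\cong\Hom_\D(\tau_\D^{-1}T,T)$, and Serre duality turns this into $D\Hom_\D(T,\mathbb{S}\tau_\D^{-1}T)$. Since $\mathbb{S}\tau_\D^{-1}\simeq\tau_\D[1]\tau_\D^{-1}\simeq[1]$, this equals $D\Hom_\D(T,T[1])=D\Ext^1_\D(T,T)=0$. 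Thus $\Ext^1_\D(FT,T)=0$, and the description $\add T=\{X\in\D\mid\Ext^1_\D(X,T)=0\}$ forces $FT\in\add T$. Finally, since $F$ is an auto-equivalence and $T$ is basic, $FT$ is a basic object of $\add T$ with exactly $|T|$ pairwise non-isomorphic indecomposable direct summands; as $\add T$ contains exactly $|T|$ indecomposables up to isomorphism, I conclude $\add FT=\add T$, hence $FT\cong T$.

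The hard part will be the Serre-duality bookkeeping in (b): one must track the shifts carefully and, importantly, use the description $\add T=\{X\mid\Ext^1_\D(X,T)=0\}$ rather than the other one, because it is exactly $\Ext^1_\D(FT,T)$ that collapses --- via $\mathbb{S}\circ\tau_\D^{-1}\simeq[1]$ --- to $\Ext^1_\D(T,T)=0$, whereas working directly with $\Ext^1_\D(T,FT)$ only reduces it to a Hom-space between $T$ and a $\tau_\D$-twist of $T$ that need not visibly vanish. Everything else --- part (a) and the counting step in (b) --- is routine.
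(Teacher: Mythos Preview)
Your proof is correct, but note that the paper does not actually prove this theorem: it is stated with the citation \cite{IY, KZ} and used as a black box throughout. So there is no ``paper's own proof'' to compare against here.

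That said, what you wrote is essentially the standard argument one finds in those references. Part (a) is exactly the usual cone-of-an-approximation proof. For part (b), your Serre-duality computation is the right one; the identity $\mathbb{S}\circ\tau_\D^{-1}\simeq[1]$ is precisely what makes $\Ext^1_\D(FT,T)$ collapse, and your observation that one must use the description $\add T=\{X\mid\Ext^1_\D(X,T)=0\}$ (rather than the other one) is well taken. One small remark: your final counting step invokes ``since $T$ is basic,'' which is not an explicit hypothesis in the statement. In the non-basic case the argument as written only yields $\add FT=\add T$; to get the object-level isomorphism $FT\cong T$ you would either assume $T$ basic (which is harmless here, and implicit throughout the paper) or note that $FT$ is again cluster-tilting and argue via the uniqueness of cluster-tilting subcategories containing one another. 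This is a cosmetic point and does not affect the substance of your argument.
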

\begin{thm}\label{cto}
\cite{KR, KZ} Let $T$ be a cluster-tilting object in $\D$, and let $\la=\End^{op}_{\D}(T)$. Then the functor
$\overline{(-)}\coloneqq {\rm Hom}_{\D}(T,-):\D\longrightarrow \mod\la$ induces an equivalence
\begin{equation}\label{equi}
\D/[T[1]]\s{\sim}\longrightarrow \mod\la,
\end{equation}
and $\la$ is a Gorenstein algebra of Gorenstein dimension at most 1.
\end{thm}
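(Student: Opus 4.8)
The plan is to show that the functor $\overline{(-)}=\Hom_{\D}(T,-)$ is full and dense onto $\mod\la$, to identify its kernel on morphisms with the ideal $[T[1]]$, and then to establish the Gorenstein bound separately via Serre duality. Throughout, the key structural input is Theorem \ref{FT}: since $\D=T\ast T[1]$, every $X\in\D$ sits in a triangle $T_1\xrightarrow{g}T_0\xrightarrow{h}X\xrightarrow{w}T_1[1]$ with $T_0,T_1\in\add T$, and $F=\tau_\D^{-1}[1]$ is an autoequivalence with $FT\cong T$. Applying $\Hom_{\D}(T,-)$ to such a triangle and using $\Hom_{\D}(T,T_1[1])=\Ext^1_{\D}(T,T_1)=0$ already shows that $\overline{X}$ is a quotient of the finitely generated projective $\overline{T_0}$ via the epimorphism $\overline{h}$, so $\overline{(-)}$ lands in $\mod\la$; restricted to $\add T$, the same computation gives the standard projectivization equivalence $\add T\xrightarrow{\sim}\proj\la$. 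One auxiliary fact will be used repeatedly: the map $\Hom_{\D}(T',X)\to\Hom_{\la}(\overline{T'},\overline{X})$ induced by $\overline{(-)}$ is an isomorphism for every $T'\in\add T$ and $X\in\D$ (reduce to $T'=T$, where it is evaluation at $1_T$, and extend by additivity); call this $(\star)$.

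For density, I would, given $M\in\mod\la$, lift a projective presentation $\overline{T_1}\to\overline{T_0}\to M\to 0$ to a morphism $g\colon T_1\to T_0$ in $\add T$, complete $g$ to a triangle $T_1\xrightarrow{g}T_0\xrightarrow{h}X\to T_1[1]$, and apply $\overline{(-)}$; since $\Hom_{\D}(T,T_1[1])=0$ one gets $\overline{X}\cong\coker(\overline{g})\cong M$. For fullness, given $\phi\colon\overline{X}\to\overline{Y}$, use the triangle for $X$: by $(\star)$ the morphism $\phi\,\overline{h}\colon\overline{T_0}\to\overline{Y}$ equals $\overline{a}$ for some $a\colon T_0\to Y$, and $\overline{ag}=\phi\,\overline{hg}=0$ (as $hg=0$), hence $ag=0$ by $(\star)$ again; applying $\Hom_{\D}(-,Y)$ to the triangle, $a=fh$ for some $f\colon X\to Y$, and since $\overline{h}$ is epic, $\overline{f}=\phi$. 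For the kernel, if $\overline{f}=0$ then $\overline{fh}=0$, so $fh=0$ by $(\star)$, whence $f$ factors through $w\colon X\to T_1[1]$ (again by $\Hom_{\D}(-,Y)$-exactness), i.e. through $\add T[1]$; conversely anything factoring through $\add T[1]$ is killed because $\overline{T'[1]}=\Ext^1_{\D}(T,T')=0$. Thus $\overline{(-)}$ is a full dense functor whose kernel is the two-sided ideal $[T[1]]$, and it induces the equivalence \eqref{equi}.

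For the Gorenstein statement I would first note $\mathbb{S}T\cong T[2]$: since $\mathbb{S}\simeq\tau_\D[1]$ and $F=\tau_\D^{-1}[1]$ gives $\tau_\D\simeq F^{-1}[1]$, Theorem \ref{FT} yields $\mathbb{S}T\cong F^{-1}(T)[2]\cong T[2]$. By Serre duality $\overline{\mathbb{S}T}=\Hom_{\D}(T,\mathbb{S}T)\cong D\End_{\D}(T)$, and, dually to $\add T\xrightarrow{\sim}\proj\la$, one has an equivalence $\add T\xrightarrow{\sim}\operatorname{inj}\la$ sending $T'\mapsto\overline{\mathbb{S}T'}$; in particular every $\overline{\mathbb{S}T'}$ with $T'\in\add T$ is an injective $\la$-module. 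Since $\ast$ depends only on the additive closures of its arguments, $(\mathbb{S}T)\ast(\mathbb{S}T)[1]=(T[2])\ast(T[3])=(T\ast T[1])[2]=\D[2]=\D$, so $T[1]\in(\mathbb{S}T)\ast(\mathbb{S}T)[1]$ and there is a triangle $\mathbb{S}U^0\to T[1]\to(\mathbb{S}U^1)[1]\to(\mathbb{S}U^0)[1]$ with $U^0,U^1\in\add T$; shifting by $[-1]$ and rotating gives $T\to\mathbb{S}U^1\to\mathbb{S}U^0\to T[1]$. Applying $\Hom_{\D}(T,-)$ and using $\overline{T[1]}=\Ext^1_{\D}(T,T)=0$ together with $\overline{(\mathbb{S}U^0)[-1]}\cong\overline{U^0[1]}=\Ext^1_{\D}(T,U^0)=0$ (as $(\mathbb{S}U^0)[-1]\cong U^0[1]$ since $\mathbb{S}U^0\cong U^0[2]$), I obtain a short exact sequence $0\to\overline{T}\to\overline{\mathbb{S}U^1}\to\overline{\mathbb{S}U^0}\to 0$ of $\la$-modules with injective outer terms. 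Since $\overline{T}={}_{\la}\la$, this shows $\mathrm{id}({}_{\la}\la)\le 1$. The hypotheses are self-dual — $\D^{\mathrm{op}}$ is again Hom-finite, Krull--Schmidt, triangulated with Serre functor $\mathbb{S}^{-1}$, $T$ is cluster-tilting in $\D^{\mathrm{op}}$, and $\End^{\mathrm{op}}_{\D^{\mathrm{op}}}(T)=\la^{\mathrm{op}}$ — so the same argument in $\D^{\mathrm{op}}$ yields $\mathrm{id}(\la_{\la})\le 1$. Hence $\la$ is Gorenstein of Gorenstein dimension at most $1$.

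The step I expect to be the main obstacle is the Gorenstein part: one must correctly identify the injective $\la$-modules with $\add\overline{\mathbb{S}T}$ through Serre duality — tracking the left/right module structures that get swapped by $D$ — and then recognize that the $\add\mathbb{S}T$-``co-approximation'' triangle of $T[1]$, available precisely because $\mathbb{S}T\cong T[2]$ forces $(\mathbb{S}T)\ast(\mathbb{S}T)[1]=\D$, becomes an injective coresolution of ${}_{\la}\la$ of length one after applying $\Hom_{\D}(T,-)$. The full/dense/kernel part, by contrast, is a routine approximation argument once $(\star)$ is in place.
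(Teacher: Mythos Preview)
The paper does not supply its own proof of this theorem: it is stated with a citation to \cite{KR, KZ} and used as background. There is therefore nothing in the paper to compare your argument against line by line. That said, your proof is correct and is essentially the standard argument one finds in those references.

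For the equivalence, your three steps (density via lifting a projective presentation to a triangle in $T\ast T[1]$, fullness and identification of the kernel via the auxiliary isomorphism $(\star)$) reproduce the K\"onig--Zhu approach verbatim; the only point worth a second look is that $(\star)$ really is functorial in both variables, so that $\overline{ag}=\overline{a}\,\overline{g}$, which you use implicitly when concluding $ag=0$. For the Gorenstein bound, your computation $\mathbb{S}T\cong T[2]$ from $FT\cong T$ is exactly the mechanism Keller--Reiten exploit; the identification of $\add\overline{\mathbb{S}T}$ with the injectives via Serre duality and the passage from the triangle $T\to\mathbb{S}U^1\to\mathbb{S}U^0\to T[1]$ to a length-one injective coresolution of ${}_{\la}\la$ are the standard steps. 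Your caution about left/right module structures under $D$ is well placed but the bifunctoriality of Serre duality handles it: the isomorphism $\Hom_{\D}(T,\mathbb{S}T')\cong D\Hom_{\D}(T',T)$ is $\la$-linear because the Serre pairing is natural in both arguments. The opposite-category trick for $\mathrm{id}(\la_{\la})\le 1$ is clean and correct.
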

This theorem tells us that all $\la$-modules have projective dimension zero, one or infinity. In order to characterize the
$\la$-modules of infinite projective dimension, Beaudet, Br\"{u}stle and Todorov \cite{BBT} introduced the following definition.
\begin{defn}
Let $X$ be an object in $\D$. The ideal of $\End^{op}_{\D}(T[1])$ given by all endomorphisms that factor through $X$ is called {\rm factorization ideal} through $X$, denoted by $I_X$.
\end{defn}
It is easy to see that $I_{M\oplus N}= I_M+ I_N$, for any two objects $M$ and $N$ in $\D$. The main theorem in \cite{BBT} is the following.
\begin{thm}\label{thm:projdim}
Let $T$ be a cluster-tilting object in $\D$ and let $\la=\End^{op}_{\D}(T)$. If $M$ is an indecomposable object in $\D$ which does not belong to $\add T[1]$, then the $\la$-module {\rm Hom}$_\D(T,M)$ has infinite projective dimension if and only if the factorization ideal $I_M$ is non-zero.
\end{thm}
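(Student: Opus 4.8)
The plan is to present the $\la$-module $\overline{M}=\Hom_{\D}(T,M)$ by a projective presentation read off from the decomposition $\D=T\ast T[1]$ of Theorem~\ref{FT}(a), and then to translate ``$\operatorname{pd}_{\la}\overline{M}=\infty$'' into a factorization statement inside $\D$ matching the definition of $I_M$. First I would fix a triangle $T_1\xrightarrow{f}T_0\xrightarrow{g}M\xrightarrow{h}T_1[1]$ with $T_0,T_1\in\add T$ in which $g$ is a right $\add T$-approximation of $M$; such a triangle exists since $\D=T\ast T[1]$. Applying $\Hom_{\D}(T,-)$ to it and using $\Ext^1_{\D}(T,T)=0$, one checks that $\Hom_{\D}(T,T_1[1])=\Ext^1_{\D}(T,T_1)=0$, so $T_1\in\add T$ and $\overline{T_1}\xrightarrow{\overline{f}}\overline{T_0}\xrightarrow{\overline{g}}\overline{M}\to 0$ is a projective presentation in $\mod\la$ (Theorem~\ref{cto}).

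By Theorem~\ref{cto}, $\la$ is Gorenstein of Gorenstein dimension at most $1$, so $\operatorname{pd}_{\la}\overline{M}\in\{0,1,\infty\}$; hence $\operatorname{pd}_{\la}\overline{M}=\infty$ if and only if $\overline{f}$ is not a monomorphism, i.e.\ $\ker\overline{f}\neq 0$. Here the standing hypotheses enter: if $\ker\overline{f}$ were a nonzero projective direct summand of $\overline{T_1}$ (which is forced when $\operatorname{pd}_{\la}\overline{M}\leq 1$, since then $\overline{T_1}/\ker\overline{f}$ is a syzygy of $\overline{M}$ up to a projective summand, hence projective, and the surjection from $\overline{T_1}$ splits), then the corresponding direct summand of $f$ would vanish in $\D$ by rigidity of $T$ ($\Ext^1_{\D}(T',T'')=0$ for $T',T''\in\add T$), forcing $M$ to have a nonzero direct summand in $\add T[1]$ — impossible since $M$ is indecomposable and $M\notin\add T[1]$. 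Thus $\operatorname{pd}_{\la}\overline{M}=\infty$ is equivalent to $\ker\overline{f}\neq 0$.

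Next I would describe $\ker\overline{f}$ intrinsically: from the rotated triangle $M[-1]\xrightarrow{-h[-1]}T_1\xrightarrow{f}T_0$, a morphism $T\to T_1$ is annihilated by $f$ exactly when it factors through $-h[-1]$, so applying the shift equivalence $[1]$ gives $\ker\overline{f}\cong\{\,h\alpha\mid\alpha\in\Hom_{\D}(T[1],M)\,\}$; hence $\ker\overline{f}\neq 0$ iff $h\alpha\neq 0$ for some $\alpha\colon T[1]\to M$. Now for the bridge to $I_M$. If $\operatorname{pd}_{\la}\overline{M}=\infty$, choose $\alpha$ with $h\alpha\neq 0$; since $T_1[1]\in\add T[1]$, composing $h\alpha$ with a suitable split projection $T_1[1]\to T[1]$ keeps it nonzero and exhibits a nonzero endomorphism of $T[1]$ factoring through $M$, so $I_M\neq 0$. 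Conversely, if a nonzero $e\colon T[1]\to T[1]$ factors as $T[1]\xrightarrow{a}M\xrightarrow{b}T[1]$, then $bg\in\Ext^1_{\D}(T_0,T)=0$, so $b=b_1h$ for some $b_1$, whence $e=b_1(ha)$; if $\operatorname{pd}_{\la}\overline{M}\leq 1$ then $ha=0$ by the description of $\ker\overline{f}$, so $e=0$, a contradiction. The same computation applies verbatim to factorizations through $\add M$, so $I_M\neq 0$ forces $\operatorname{pd}_{\la}\overline{M}=\infty$.

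The step I expect to be the main obstacle is the equivalence ``$\operatorname{pd}_{\la}\overline{M}=\infty\iff\ker\overline{f}\neq 0$'': this is precisely where the hypotheses ``$M$ indecomposable'' and ``$M\notin\add T[1]$'' are essential — the statement is visibly false for $M=T[1]$, where $\overline{M}=0$ but $I_M\neq 0$ — and it requires combining the Gorenstein-dimension bound with a careful argument ruling out split projective summands of $\overline{T_1}$ lying in $\ker\overline{f}$. Everything else reduces to short diagram chases with the fixed triangle and the rigidity of $T$.
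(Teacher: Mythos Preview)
The paper does not give its own proof of this statement: Theorem~\ref{thm:projdim} is quoted from \cite{BBT} (``The main theorem in \cite{BBT} is the following'') and used as a black box in the proof of Theorem~\ref{thm:mainthm}(d). So there is nothing in the present paper to compare your argument against.

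That said, your proof is correct. The strategy --- read off a projective presentation of $\overline{M}$ from an approximation triangle $T_1\xrightarrow{f}T_0\to M\xrightarrow{h}T_1[1]$, use the Gorenstein bound from Theorem~\ref{cto} to reduce $\operatorname{pd}_{\la}\overline{M}=\infty$ to $\ker\overline{f}\neq 0$, identify $\ker\overline{f}$ with $\{h\alpha:\alpha\in\Hom_{\D}(T[1],M)\}$, and then pass to $I_M$ via split embeddings and projections of $T_1[1]$ in $\add T[1]$ --- is exactly the natural one and each step goes through as you indicate. One cosmetic point: in the key step where a nonzero projective summand $K$ of $\ker\overline{f}$ forces a summand of $M$ in $\add T[1]$, the reason the decomposition $\overline{T_1}\simeq K\oplus P$ lifts to $T_1\simeq T_1'\oplus T_1''$ with $f|_{T_1'}=0$ is that $\Hom_{\D}(T,-)$ restricts to an \emph{equivalence} $\add T\simeq\proj\la$ (a consequence of Theorem~\ref{cto}), not merely that $\Ext^1_{\D}(T',T'')=0$; full faithfulness on $\add T$ is what turns $\overline{f|_{T_1'}}=0$ into $f|_{T_1'}=0$. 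With that adjustment your write-up is fine.
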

We keep the notation of Theorem \ref{cto} and denote by iso$\D$ the set of isomorphism classes of objects in $\D$. By the equivalence ($\ref{equi}$), we have a bijection
\begin{align}\label{bijection}
\widetilde{(-)}:\mbox{iso}\D & \longleftrightarrow\mbox{iso}(\mod\la)\times\mbox{iso}(\proj\la)\\
X=X'\oplus X'' & \longmapsto \quad \widetilde{X}\coloneqq (\overline{X'}, \overline{X''[-1]}),\nonumber
\end{align}
where $X''$ is a maximal direct summand of $X$ which belongs to $\add T[1]$. Under this bijection, a lot of work to study the relationships between objects in $\D$ and modules in $\mod \la$, see for example \cite{AIR, B, CZZ, FL, HJ, Smi, YZZ}.
In particular, we denote by c-tilt$_T\D$ the set of isomorphism classes of basic cluster-tilting objects in $\D$ which do not have non-zero direct summands in $\add T[1]$,
 then we have the following result.
\begin{thm}\cite[Theorem 4.1]{AIR}
If $\D$ is 2-CY, then the bijection $\widetilde{(-)}$ induces bijections
$${\rm rigid}\D\leftrightarrow \tau\text{-}{\rm rigid}\la,\quad {\rm c}\text{-}{\rm tilt}\D\leftrightarrow {\rm s}\tau\text{-}{\rm tilt}\la,\quad and \quad {\rm c}\text{-}{\rm tilt}_T\D\leftrightarrow \tau\text{-}{\rm tilt}\la.$$
\end{thm}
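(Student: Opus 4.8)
The plan is to verify that the bijection $\widetilde{(-)}$ of $(\ref{bijection})$ restricts to each of the three displayed correspondences. Since a pair of the form $(\overline X,0)$ corresponds exactly to an object $X$ with no direct summand in $\add T[1]$, and a support $\tau$-tilting pair with zero projective part is the same thing as a $\tau$-tilting module (Proposition \ref{uniqueness}), all three statements reduce to a single one: for $X=X'\oplus X''$ with $X''\in\add T[1]$ a maximal such summand, $X$ is rigid in $\D$ if and only if $\widetilde X=(\overline{X'},\overline{X''[-1]})$ is a $\tau$-rigid pair for $\la$, and in that case $X$ is cluster-tilting if and only if $\widetilde X$ is support $\tau$-tilting.

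The computational heart is a formula for $\tau_\la$ on modules coming from $\D$. Since $\D$ is $2$-CY, its Serre functor is $[2]$ and $\tau_\D\simeq[1]$. For an object $Y$ with no summand in $\add T\cup\add T[1]$ choose, using Theorem \ref{FT}(a), a \emph{minimal} triangle $T_1\xrightarrow{f}T_0\to Y\to T_1[1]$ with $T_0,T_1\in\add T$; applying $\Hom_\D(T,-)$ and using $\Ext^1_\D(T,T_i)=0$ gives a minimal projective presentation $\overline{T_1}\xrightarrow{\overline f}\overline{T_0}\to\overline Y\to 0$. Computing $\operatorname{Tr}\overline Y=\operatorname{coker}\Hom_\la(\overline f,\la)$, identifying $\add T$ with $\proj\la$, applying Serre duality in $\D$, and then running the long exact sequence obtained from the triangle shifted by $[2]$ (whose error term $\Hom_\D(T,T_0[1])=\Ext^1_\D(T,T_0)$ vanishes) yields a functorial isomorphism $\tau_\la\overline Y\cong\Hom_\D(T,Y[1])$. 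Adding back projective summands (on which $\tau_\la$ vanishes) and combining with the equivalence $(\ref{equi})$, one gets for every $X$ and every $Y$ without summand in $\add T[1]$
$$\Hom_\la(\overline X,\tau_\la\overline Y)\;\cong\;\Hom_\D(X,Y[1])\big/[T[1]](X,Y[1])\;=\;\Ext^1_\D(X,Y)\big/[T[1]](X,Y[1]).$$

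Granting this, the equivalence between rigid objects and $\tau$-rigid pairs unwinds. Because $X''[-1]\in\add T$ we have $[T[1]](X''[-1],X')=0$, so the formula gives $\Hom_\la(\overline{X''[-1]},\overline{X'})\cong\Ext^1_\D(X'',X')$; by $2$-CY duality this vanishes iff $\Ext^1_\D(X',X'')$ does, and $\Ext^1_\D(X'',X'')=0$ automatically, which accounts for all the ``mixed'' summands of $X'\oplus X''$. It remains to see that $\overline{X'}$ is $\tau$-rigid iff $X'$ is rigid; by the formula the former says $\Ext^1_\D(X',X')=[T[1]](X',X'[1])$, and the non-obvious implication rests on the observation that \textbf{$[T[1]](X',X'[1])$ is a totally isotropic subspace of $\Ext^1_\D(X',X')$ for its non-degenerate Serre pairing}. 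Indeed, if $f=\beta\alpha$ and $g=\delta\gamma$ both factor through $\add T[1]$ (through $P[1]$ and $Q[1]$ respectively, with $P,Q\in\add T$), then the composite computing $\langle f,g\rangle$ factors as $g[1]\circ f=\delta[1]\circ(\gamma[1]\circ\beta)\circ\alpha$ with $\gamma[1]\circ\beta\in\Hom_\D(P[1],Q[2])\cong\Ext^1_\D(P,Q)=0$, so $\langle f,g\rangle=0$. Since a non-degenerate pairing cannot restrict to zero on the whole space, the equality $[T[1]](X',X'[1])=\Ext^1_\D(X',X')$ forces $\Ext^1_\D(X',X')=0$.

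The remaining two bijections now follow formally. If $X$ is cluster-tilting it is in particular rigid, so $\widetilde X$ is $\tau$-rigid; since $\overline{(-)}$ induces the equivalence $(\ref{equi})$ we have $|\overline{X'}|+|\overline{X''[-1]}|=|X'|+|X''|=|X|=|T|=|\la|$, so $\widetilde X$ is support $\tau$-tilting. Conversely, a support $\tau$-tilting pair has underlying object $X$ rigid with $|X|=|\la|=|T|$, and a rigid object of a $2$-CY category admitting a cluster-tilting object has at most $|T|$ indecomposable summands, with equality precisely when it is cluster-tilting; hence $X$ is cluster-tilting. Finally, restricting to objects with no summand in $\add T[1]$ — equivalently, to pairs $(\overline X,0)$ — and invoking Proposition \ref{uniqueness} identifies ${\rm c}\text{-}{\rm tilt}_T\D$ with $\tau\text{-}{\rm tilt}\la$. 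The main obstacle is the middle paragraph: proving the $\tau_\la$-formula correctly (minimality of the presentation, and the bookkeeping with the projective summands of $\overline{X'}$ that arise from summands of $X$ lying in $\add T$) and then extracting from it the isotropy argument, which is exactly what rules out $\Ext^1_\D(X',X')$ being ``all ghost''.
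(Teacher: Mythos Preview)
Your argument is correct and follows a genuinely different route from the paper's. The paper does not prove this statement directly---it is quoted from \cite{AIR}---but recovers it as Corollary~\ref{cor:maincor1}(4), by specialising Theorem~\ref{thm:mainthm} via Proposition~\ref{prop:T[1]-rigid}. The key identity there, obtained from Lemma~\ref{Palu}, is the \emph{dual} description $\Hom_\la(\overline{X'},\tau\overline{X'})\simeq D[T[1]](X',X[1])$, valid for an arbitrary Serre functor; the passage from ``$T[1]$-rigid'' to ``rigid'' in the $2$-CY case is then carried out separately as a diagram chase along the triangle $T_0\to N\to T_1[1]\to T_0[1]$ (Proposition~\ref{prop:T[1]-rigid}). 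You instead use the $2$-CY hypothesis at the outset to compute $\tau_\la\overline Y\cong\overline{Y[1]}$, which yields the \emph{quotient} description $\Hom_\la(\overline{X'},\tau\overline{X'})\cong\Ext^1_\D(X',X')/[T[1]](X',X'[1])$, and then dispose of the non-obvious implication by your Serre-pairing isotropy argument rather than by diagram chasing. Your route is more direct and conceptually attractive in the $2$-CY setting---the total isotropy of the ghost ideal under the self-pairing on $\Ext^1$ is a clean symmetry statement---whereas the paper's detour through $T[1]$-rigidity buys generality: the identity $\Hom_\la\simeq D[T[1]]$ and the resulting bijection persist for arbitrary Serre functor, which is precisely the point of Theorem~\ref{thm:mainthm}.
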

However, this result does not hold if $\D$ is not 2-CY. Here we consider an easy example.

\begin{exm}\label{exm:2.15}
{\upshape Let $Q$ be the quiver $$\setlength{\unitlength}{0.03in}\xymatrix{1 \ar[r]^{\alpha} & 2\ar[r]^{\beta} & 3}.$$
We consider the $3$-cluster category $\D =D^b(kQ)/\tau_Q^{-1}[3]$ of type $A_3$, where $\tau_Q$ is the Auslander-Reiten translation in $D^b(kQ)$ (see \cite{K1, K2, T, Z08} for details).
 Then $\D$ is a 4-Calabi-Yau triangulated category and its AR-quiver is as follows:}
\end{exm}
\vspace{-4ex}
\begin{center}
\scalebox{0.65}{\includegraphics{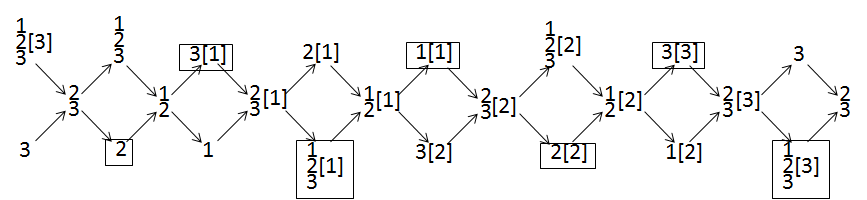}}
\end{center}
{\setlength{\jot}{-3pt}
The direct sum $T=2\oplus 3[1]\oplus \begin{aligned}1\\2\\3\end{aligned}[1]\oplus 1[1]\oplus 2[2]\oplus 3[3]\oplus \begin{aligned}1\\2\\3\end{aligned}[3]$ gives a cluster-tilting object, and the opposite algebra of the endomorphism algebra $\la=\End^{op}_{\D}(T)$ is given by the following quiver with $rad^2=0$.}
\begin{center}
$\begin{xy} /r12mm/:
(6.6,0),{  \xypolygon7{
~*{\xypolynode}
~><{@/_.0ex/}
}}
\end{xy}$
\end{center}
The AR-quiver of $\mod\la$ is as follows:
\vspace{-2ex}
\begin{center}
$$
\xymatrix@!@C=0.01cm@R=0.5cm{  % @=0.01mm
\txt{1\\2}\ar[dr]&&&&\txt{6\\7}\ar[dr]&&&&*+[o][F-]{\txt{4\\5}}\ar[dr]&&&&\txt{2\\3}\ar[dr]&&\\  %1
&1\ar[dr]&&7\ar[ur]\ar@{.>}[ll]&&6\ar[dr]\ar@{.>}[ll]&&*+[o][F-]{5}\ar[ur]\ar@{.>}[ll]&&4\ar@{.>}[ll]\ar[dr]&&3\ar@{.>}[ll]\ar[ur]&&2\ar@{.>}[ll]\ar[dr]&\\
&&\txt{7\\1}\ar[ur]&&&&*+[o][F-]{\txt{5\\6}}\ar[ur]&&&&\txt{3\\4}\ar[ur]&&&&\txt{1\\2}  }
$$%Figure 2
\end{center}

{\setlength{\jot}{-3pt}    %与\usepackage{pictex}冲突
It is easy to see that $M=\begin{aligned}4\\5\end{aligned}\oplus 5\oplus\begin{aligned}5\\6\end{aligned}$ is a support $\tau$-tilting $\la$-module, but the object in $\D$ corresponding to $M$ is $1[1]\oplus \begin{aligned}1\\2\end{aligned}[1]\oplus \begin{aligned}1\\2\\3\end{aligned}[1]\oplus 3\oplus \begin{aligned}1\\2\\3\end{aligned}\oplus 1\oplus 2[1]$, which is not a cluster-tilting object since it has self-extensions.}
\medskip

In next section, we shall investigate an important class of objects in $\D$, which correspond to support $\tau$-tilting $\la$-modules bijectively.

\section{Relative cluster tilting objects and support $\tau$-tilting modules}\label{sect:3}
In this section, we study the following objects in triangulated categories.
\begin{defn}
Let $\D$ be a triangulated category with cluster-tilting objects.
\begin{itemize}
\item An object $X$ in $\D$ is called {\rm relative rigid} if there exists a cluster-tilting object $T$ such that $[T[1]](X, X[1])=0$. In this case, $X$ is also called $T[1]$-{\rm rigid}.    %%fac thr 的定义
\item An object $X$ in $\D$ is called {\rm relative cluster tilting} (respectively, {\rm almost relative cluster tilting}) if there exists a cluster-tilting object $T$ such that $X$ is $T[1]$-rigid and $|X|=|T|$ (respectively, $|X|=|T|-1$), where $|X|$ denotes the number of non-isomorphic indecomposable direct summands of $X$. In this case, $X$ is also called $T[1]$-{\rm cluster tilting} (respectively, {\rm almost} $T[1]$-{\rm cluster tilting}).
\end{itemize}
\end{defn}
\begin{rem}
Any rigid object in $\D$ is relative rigid.
\end{rem}

The following easy observation shows that relative cluster tilting objects can be regarded as a generalization of cluster-tilting objects.
\begin{prop}
If $\D$ admits a Serre functor, then for any cluster-tilting object $T$, cluster-tilting objects in $\D$ are $T[1]$-cluster tilting.
\end{prop}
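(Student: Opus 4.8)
The plan is to show that any cluster-tilting object $S$ in $\D$ is $T[1]$-rigid with respect to the \emph{same} cluster-tilting object $T$ we started with, and that $|S|=|T|$, so that $S$ is $T[1]$-cluster tilting by definition. The second point is standard: all basic cluster-tilting objects in a Krull-Schmidt triangulated category with a cluster-tilting object have the same number of indecomposable summands (this follows from Theorem~\ref{cto}, since $|S|=|\la|=|T|$, where $\la=\End^{op}_\D(T)$, because under the equivalence $\D/[T[1]]\simeq\mod\la$ a cluster-tilting object corresponds to a projective generator, hence has $|\la|$ summands — or alternatively one invokes the standard invariance of the number of summands of cluster-tilting objects). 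So the real content is the first point: $[T[1]](S,S[1])=0$.

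For this I would argue as follows. Since $S$ is rigid, $\Ext^1_\D(S,S)=\Hom_\D(S,S[1])=0$, so \emph{every} morphism $S\to S[1]$ is zero, and in particular every morphism $S\to S[1]$ that factors through $\add T[1]$ is zero. That is, $[T[1]](S,S[1])\subseteq\Hom_\D(S,S[1])=0$ trivially. In other words, for a genuinely rigid object the relative rigidity condition with respect to any $T$ is automatic, because the ambient $\Hom$-group already vanishes. Combined with $|S|=|T|$, this gives that $S$ is $T[1]$-cluster tilting, which is exactly the claim.

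The only place the Serre functor hypothesis enters is in guaranteeing, via Theorem~\ref{cto} (which is stated for $\D$ with a Serre functor), that $|T|$ is a well-defined invariant equal to $|\la|$ and shared by all cluster-tilting objects; this is what lets us conclude $|S|=|T|$. I expect this step — pinning down that all cluster-tilting objects have the same number of indecomposable summands — to be the only one requiring any care, and it is handled entirely by the already-quoted equivalence $\D/[T[1]]\xrightarrow{\sim}\mod\la$ together with the fact that cluster-tilting objects correspond to modules of projective dimension zero spanning $\proj\la$. The rest is immediate from the definitions.
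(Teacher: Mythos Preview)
Your overall approach matches the paper's: the $T[1]$-rigidity of a cluster-tilting object $S$ is immediate from $[T[1]](S,S[1])\subseteq\Hom_\D(S,S[1])=0$, and then one needs $|S|=|T|$. The paper handles the latter by citing Corollary~2.15 of \cite{YZZ}, which is precisely your parenthetical ``standard invariance'' alternative.

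However, your \emph{primary} argument for $|S|=|T|$ is wrong. You claim that under the equivalence $\D/[T[1]]\simeq\mod\la$ of Theorem~\ref{cto}, any cluster-tilting object corresponds to a projective generator and hence has $|\la|$ summands. This is false: only $T$ itself maps to $\la$; an arbitrary cluster-tilting object $S$ maps to $\Hom_\D(T,S)$, which is in general not projective (already in an ordinary cluster category, distinct cluster-tilting objects give non-projective tilting modules over $\la$). Your later elaboration that ``cluster-tilting objects correspond to modules of projective dimension zero spanning $\proj\la$'' repeats the same error. So Theorem~\ref{cto} alone does not yield $|S|=|T|$; one genuinely needs an external invariance result such as \cite[Corollary~2.15]{YZZ}, and that is where the Serre functor hypothesis is actually used. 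Drop the flawed argument and keep your alternative.
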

\begin{proof}
Let $M$ be a cluster-tilting object in $\D$. Clearly, it is $T[1]$-rigid. By Corollary 2.15 in \cite{YZZ}, all basic cluster-tilting objects have the same number of indecomposable direct summands. Thus, $|M|=|T|$. Hence $M$ is $T[1]$-cluster tilting.
\end{proof}

Throughout this section, we assume that $\D$ is a $k$-linear, Hom-finite triangulated category with cluster-tilting objects and a Serre functor $\mathbb{S}$. Let $T$ be a cluster-tilting object in $\D$ and $\la=\End^{op}_{\D}(T)$ the opposite algebra of the endomorphism algebra of $T$. Our aim in this section is to show that there is a close relationship between $T[1]$-cluster tilting objects in $\D$ and support $\tau$-tilting $\la$-modules.
\medskip

Let $\tau_\D$ be the Auslander-Reiten translation in $\D$. We first give the following proposition, which indicates that $T[1]$-rigid objects and rigid objects coincide in some cases.% and denote by $F=\tau_\D^{-1}[1]$
\begin{prop}\label{prop:T[1]-rigid}
For two objects $M$ and $N$ in $\D$, $[T[1]](M, N[1])=0$ and $[T[1]](N, \tau_\D M)=0$ if and only if ${\rm Hom}_{\D}(M, N[1])=0$. In particular, if $\D$ is 2-CY, then $X$ is $T[1]$-rigid if and only if $X$ is rigid.
\end{prop}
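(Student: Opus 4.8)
The plan is to prove the ``if and only if'' by separately handling the two directions, with the forward (harder) direction using the Serre duality identity $\Hom_{\D}(A,B)\cong D\Hom_{\D}(B,\mathbb S A)$ together with the fact that $\mathbb S\simeq\tau_{\D}[1]$, so that $\tau_{\D}\simeq\mathbb S[-1]$. The ``only if'' direction is trivial: if $\Hom_{\D}(M,N[1])=0$ then \emph{every} subgroup of it vanishes, in particular $[T[1]](M,N[1])=0$, and also $[T[1]](N,\tau_{\D}M)\subseteq\Hom_{\D}(N,\tau_{\D}M)\cong D\Hom_{\D}(M,\mathbb S N)\cong D\Hom_{\D}(M,\tau_{\D}N[1])$; the last group need not vanish in general, so one must be slightly careful, but in fact $[T[1]](N,\tau_{\D}M)$ is cut out by the factorization condition and one shows it is the image under the duality of $[\,?\,]$ inside $\Hom_{\D}(M,N[1])$. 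I would actually route the whole argument through the explicit description of $[T[1]](-,-)$ as the kernel of the functor $\overline{(-)}=\Hom_{\D}(T,-)$ from Theorem \ref{cto}.

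The core of the argument is the following duality computation. By Serre duality, $\Hom_{\D}(M,N[1])\cong D\Hom_{\D}(N[1],\mathbb S M)=D\Hom_{\D}(N,\mathbb S M[-1])=D\Hom_{\D}(N,\tau_{\D}M)$, using $\mathbb S[-1]\simeq\tau_{\D}$. Under this isomorphism I claim the subgroup $[T[1]](M,N[1])$ corresponds to the annihilator of $[T[1]](N,\tau_{\D}M)$, or more precisely that a morphism $f\colon M\to N[1]$ lies in $[T[1]](M,N[1])$ if and only if the induced linear functional on $\Hom_{\D}(N,\tau_{\D}M)$ kills everything that factors through $\add T[1]$. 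Granting this, the statement becomes: $\Hom_{\D}(M,N[1])=0$ iff both $[T[1]](M,N[1])=0$ and $[T[1]](N,\tau_{\D}M)=0$, which is exactly the assertion that a finite-dimensional bilinear pairing is nondegenerate on both sides precisely when... no: rather, that the pairing $\Hom_{\D}(M,N[1])\times\Hom_{\D}(N,\tau_{\D}M)\to k$ has the property that $[T[1]](M,N[1])$ is the left kernel of the pairing restricted so that $[T[1]](N,\tau_{\D}M)$ is the right kernel; then $\Hom_{\D}(M,N[1])=0$ iff it equals its own ``$T[1]$-part'' AND the complementary condition holds. The cleanest way to see the claimed correspondence is to use the functorial form of Serre duality to identify $[T[1]](M,N[1])$ with morphisms $M\to N[1]$ that die under $\Hom_{\D}(T,-)$, i.e.\ with $\ker\big(\Hom_{\D}(M,N[1])\to\Hom_{\mod\la}(\overline M,\overline{N[1]})\big)$, and similarly for $[T[1]](N,\tau_{\D}M)$, and then check that these two kernels are exact annihilators of one another under the perfect pairing — this is a standard ``relative Serre duality''/Auslander--Reiten duality computation for the module category, using that $\la$ is Gorenstein of dimension $\le 1$.

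Concretely the steps, in order, are: (1) recall $\mathbb S\simeq\tau_{\D}[1]$ and write down the Serre duality isomorphism $\Hom_{\D}(M,N[1])\cong D\Hom_{\D}(N,\tau_{\D}M)$; (2) recall from Theorem \ref{cto} that $[T[1]](X,Y)=\ker\big(\Hom_{\D}(X,Y)\to\Hom_{\la}(\overline X,\overline Y)\big)$ and that modding out $[T[1]]$ gives $\mod\la$; (3) show that under the isomorphism in (1) the subspace $[T[1]](M,N[1])$ is the orthogonal complement of $[T[1]](N,\tau_{\D}M)$, by comparing both with Auslander--Reiten duality $\Hom_{\la}(\overline M,\overline{N[1]})\cong D\,\overline{\Hom}_{\la}(\overline{N[1]},\tau\overline M)$ on the module side — here I would invoke that the functor $\overline{(-)}$ intertwines (up to the correction coming from projectives, i.e.\ $\add T[1]$) the triangulated shift/Serre functor with $\tau$ on $\mod\la$; (4) conclude: $\Hom_{\D}(M,N[1])=0\iff\big([T[1]](M,N[1])=0$ and its orthogonal $[T[1]](N,\tau_{\D}M)=0\big)$, since a finite-dimensional space is zero iff both it and the annihilator of a subspace (which together span it) vanish. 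Finally, for the ``in particular'' clause: when $\D$ is 2-CY, $\mathbb S\simeq[2]$ so $\tau_{\D}\simeq[1]$, hence $[T[1]](N,\tau_{\D}M)=[T[1]](N,M[1])$; applying the main equivalence to the pair $(N,M)$ as well and using symmetry of rigidity ($\Hom_{\D}(M,N[1])\cong D\Hom_{\D}(N,M[1])$) one gets that $X$ is $T[1]$-rigid, i.e.\ $[T[1]](X,X[1])=0$, together with the automatically-equal condition $[T[1]](X,\tau_{\D}X)=[T[1]](X,X[1])=0$, forces $\Hom_{\D}(X,X[1])=0$; the converse is the trivial direction.

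I expect step (3) to be the main obstacle: making precise the compatibility between the triangulated Serre functor $\mathbb S$ on $\D$ and the Auslander--Reiten translate $\tau$ on $\mod\la$ through the quotient functor $\overline{(-)}$, especially keeping track of the projective summands (objects of $\add T[1]$) that are lost under $\overline{(-)}$. The safest route is probably not to prove a clean functorial statement but to argue directly with the perfect pairing: fix $f\in\Hom_{\D}(M,N[1])$, and show $f\in[T[1]](M,N[1])$ $\iff$ $\langle f,g\rangle=0$ for all $g\in\Hom_{\D}(N,\tau_{\D}M)$ that factor through $\add T[1]$, by expanding $g$ through a triangle $T_1\to N\to ?$ with $T_1\in\add T[1]$ and using the defining adjunction-type property of the Serre trace map together with the vanishing $\Hom_{\D}(T[1],T[1][1])$-type relations; this is essentially the computation behind Theorem \ref{cto} and should go through, though it is the least mechanical part of the proof.
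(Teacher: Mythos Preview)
Your overall strategy --- set up the Serre pairing $\Hom_{\D}(M,N[1])\times\Hom_{\D}(N,\tau_{\D}M)\to k$ and show that $[T[1]](M,N[1])$ and $[T[1]](N,\tau_{\D}M)$ are mutual annihilators --- is correct and is a genuinely different, more conceptual route than the paper's. The paper does not argue via orthogonality at all: it takes the cluster-tilting triangle $T_0\to N\to T_1[1]\to T_0[1]$, dualizes the long exact sequence via Serre duality, and reads off directly that $D\Hom_{\D}(M,N[1])$ has trivial kernel (from $[T[1]](N,\tau_{\D}M)=0$) and trivial image (from $[T[1]](M,N[1])=0$). Your orthogonality claim is in fact exactly (a shift of) Lemma~\ref{Palu} in the paper, namely $\Hom_{\D/[T[1]]}(M,N[1])\cong D[T[1]](N,\tau_{\D}M)$, so your argument is really ``Proposition~\ref{prop:T[1]-rigid} follows from Lemma~\ref{Palu}'', which is a perfectly good observation but does postpone the actual computation.

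That said, there are two concrete problems in your write-up. First, in the ``if'' direction you misapply Serre duality: you write $\Hom_{\D}(N,\tau_{\D}M)\cong D\Hom_{\D}(M,\mathbb{S}N)$, but Serre duality gives $\Hom_{\D}(N,\tau_{\D}M)\cong D\Hom_{\D}(\tau_{\D}M,\mathbb{S}N)=D\Hom_{\D}(\tau_{\D}M,\tau_{\D}N[1])\cong D\Hom_{\D}(M,N[1])$, which \emph{does} vanish by hypothesis. So the ``if'' direction is genuinely trivial; your worry that ``the last group need not vanish in general'' comes from putting $M$ in the wrong slot. Second, your proposed proof of step~(3) invokes ``Auslander--Reiten duality $\Hom_{\la}(\overline M,\overline{N[1]})\cong D\,\overline{\Hom}_{\la}(\overline{N[1]},\tau\overline M)$'', but that is not a standard AR formula (AR duality relates $\underline{\Hom}$ or $\overline{\Hom}$ to $\mathrm{Ext}^1$, not $\Hom$ to $D\overline{\Hom}$), and the compatibility $\overline{\tau_{\D}(-)}\simeq\tau\overline{(-)}$ you would need only holds away from $\add T[1]$. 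Your ``safest route'' at the end --- a direct computation with the trace pairing using a triangle for $N$ with ends in $\add T$ --- is exactly how Lemma~\ref{Palu} is proved, so that would work, but it is essentially the same computation the paper carries out directly in its proof of the proposition.
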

\begin{proof}
We show the `if' part. If ${\rm Hom}_{\D}(M, N[1])=0$, then $[T[1]](M, N[1])=0$. By Serre duality, we have $${\rm Hom}_{\D}(N, \tau_\D M)\simeq {\rm Hom}_{\D}(N[1], \mathbb{S}M)\simeq D\Hom_\D(M, N[1])=0.$$
Thus we obtain $[T[1]](N, \tau_\D M)=0$.
\medskip

We show the `only if' part. Since $T$ is a cluster-tilting object, by Theorem \ref{FT}, we know there exists a triangle
$$T_0\s{g}\longrightarrow N \s{f}\longrightarrow T_1[1] \s{h}\longrightarrow T_0[1]$$ with $T_0, T_1\in \add T$. Thus we have a commutative diagram of exact sequences
 $$\xymatrix@!@C=0.5cm@R=0.01cm{
(T_1[1],\tau_\D M)\ar[r]^{\cdot f}\ar[d]_{\wr}&  (N,\tau_\D M)\ar[r]\ar[d]^{\wr} & (T_0, \tau_\D M)\ar[r]\ar[d]_{\wr} & (T_1, \tau_\D M)\ar[d]_{\wr}  \\
D(M, T_1[2])\ar[r]^{D(f[1]\cdot)}          &  D(M, N[1])\ar[r]^{D(g[1]\cdot)} &            D(M,T_0[1])\ar[r]^{D(h\cdot)}   & D(M,T_1[1]). }
$$
Since Im$(\cdot f)=\{af\ |\ a\in \Hom_\D(T_1[1],\tau_\D M)\}\subseteq [T[1]](N, \tau_\D M)=0$, we know that
\begin{equation}\label{equi:1}
{\rm Ker}D(g[1]\cdot)={\rm Im}D(f[1]\cdot)\simeq {\rm Im}(\cdot f)=0.
\end{equation}
$$\xymatrix@!@C=0.9cm@R=0.8cm{
N\ar[r]^{f}\ar[d] & T_1[1]\ar[r]^{h}\ar[ld] & T_0[1]\ar[r]^{g[1]} & N[1]  \\
\tau_\D M         &                         & M\ar[u]^{b}\ar@{.>}[lu]^{\exists c}  }
$$
Take any $b\in$ Hom$_\D(M, T_0[1])$. Since $[T[1]](M, N[1])=0$, we have $g[1]b=0$. Thus there exists $c:M\rightarrow T_1[1]$ such that $b=hc$, which implies that \begin{eqnarray*}
(h\cdot):\Hom_\D(M, T_1[1]) & \longrightarrow & \Hom_\D(M, T_0[1]) \\
 c & \longmapsto & hc.
\end{eqnarray*} %$$(h\cdot):\Hom_\D(M, T_1[1])\longrightarrow \Hom_\D(M, T_0[1])$$
 is surjective. Therefore, $D(h\cdot)$ is injective and $$\text{Im}D(g[1]\cdot)=\text{Ker}D(h\cdot)=0.$$ Combining this with (\ref{equi:1}), we know that $D$Hom$_\D(M, N[1])=0$ and Hom$_\D(M, N[1])$ vanishes.
\end{proof}

The following lemma plays an important role in this paper. This was proved by Palu \cite{P} in case $\D$ is a 2-CY category, and proved in \cite{YZZ} for general case. For the convenience of the readers, we give a simple proof in the following.
\begin{lem}\label{Palu}
Let $\D$ be a triangulated category with a Serre functor $\mathbb{S}$ and a cluster-tilting object $T$. Then for any objects $X$ and $Y$ in $\D$, there is a  bifunctorial isomorphism
$${\rm Hom}_{\D/[T]}(\tau_\D^{-1}Y, X)\simeq D[T](X[-1], Y).$$
\end{lem}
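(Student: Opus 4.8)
The plan is to build the bifunctorial isomorphism $\mathrm{Hom}_{\D/[T]}(\tau_\D^{-1}Y, X)\simeq D[T](X[-1],Y)$ by comparing two exact sequences: one coming from a triangle presenting $\tau_\D^{-1}Y$ relative to $\add T$, and one obtained from it by applying Serre duality. First I would use Theorem \ref{FT} to write a triangle $T_1\to T_0\to \tau_\D^{-1}Y\to T_1[1]$ with $T_0,T_1\in\add T$ (equivalently, present $Y$ itself by a triangle $T_0'\to T_1'\to Y\to T_0'[1]$ and apply $\tau_\D^{-1}$, or directly use $\D=T\ast T[1]$ together with $FT\cong T$). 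Applying $\mathrm{Hom}_{\D}(-,X)$ to this triangle and using that morphisms from $T_0$ (and $T_1$) to $X$ factor through $\add T$ by definition, the cokernel of $\mathrm{Hom}_{\D}(T_0,X)\to\mathrm{Hom}_{\D}(T_1,X)$ — taken modulo the image of maps factoring through $T$ — computes $\mathrm{Hom}_{\D/[T]}(\tau_\D^{-1}Y,X)$, since $\mathrm{Hom}_{\D/[T]}(T_i,X)=0$.

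Next I would dualize. By the Serre functor isomorphism $\mathrm{Hom}_{\D}(A,B)\simeq D\,\mathrm{Hom}_{\D}(B,\mathbb{S}A)$ together with $\mathbb{S}\simeq\tau_\D[1]$, each term $\mathrm{Hom}_{\D}(T_i,X)$ is identified with $D\,\mathrm{Hom}_{\D}(X,\tau_\D T_i[1])=D\,\mathrm{Hom}_{\D}(X[-1],\tau_\D T_i)$, and the connecting maps in the long exact sequence transform accordingly. Rotating the presenting triangle to $Y\to\tau_\D T_1\to\tau_\D T_0\to Y[1]$ (after applying $\tau_\D$ and shifting) and applying $\mathrm{Hom}_{\D}(X[-1],-)$, the relevant piece reads $\mathrm{Hom}_{\D}(X[-1],\tau_\D T_1)\to\mathrm{Hom}_{\D}(X[-1],\tau_\D T_0)$ with the image of the next term $\mathrm{Hom}_{\D}(X[-1],Y)$ as kernel; but the subgroup $[T](X[-1],Y)$ is exactly the set of morphisms $X[-1]\to Y$ lifting through $\tau_\D T_1\to Y$ — here I use that $\add\tau_\D T=\add T$ because $\tau_\D T\cong FT[-1]$... more carefully, one should track that $[T](X[-1],Y)$ is the image of $\mathrm{Hom}(X[-1],\tau_\D T_1)\to\mathrm{Hom}(X[-1],Y)$, which follows since any map $X[-1]\to Y$ factoring through an object of $\add T$ factors through the right $\add T$-approximation appearing in the triangle. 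Dualizing this identification gives $D[T](X[-1],Y)$ as the cokernel of the dual map, matching the description of $\mathrm{Hom}_{\D/[T]}(\tau_\D^{-1}Y,X)$ from the first paragraph.

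Finally I would assemble the two computations: both sides are the (co)homology of the same two-term complex $\mathrm{Hom}_{\D}(T_0,X)\to\mathrm{Hom}_{\D}(T_1,X)$ up to the natural $D(-)$ and Serre-duality identifications, so they are canonically isomorphic, and naturality of the Serre isomorphism and of the long exact sequences gives bifunctoriality in $X$ and $Y$.

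The main obstacle I anticipate is bookkeeping rather than conceptual: one must be careful that the subgroup $[T](X[-1],Y)\subseteq\mathrm{Hom}_\D(X[-1],Y)$ is genuinely the image of the map induced by the chosen triangle — i.e. that the morphism $T_1\to Y$ (or $T_0\to Y$, depending on the rotation convention) is a right $\add T$-approximation of $Y$, so that every $T$-factoring map to $Y$ lifts through it — and similarly on the $X$ side, that $\mathrm{Hom}_{\D/[T]}(\tau_\D^{-1}Y,X)$ is correctly read off the presentation. Choosing the triangle presenting $\tau_\D^{-1}Y$ (equivalently $Y$) with the approximation property built in, which is exactly what $\D=T\ast T[1]$ supplies, removes this difficulty; after that the isomorphism is just Serre duality applied termwise plus the five lemma.
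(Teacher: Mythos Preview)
Your overall strategy—take an $\add T$-presentation, apply $\Hom$, and dualize via Serre duality to compare an image with a kernel—is exactly the paper's. But there is a genuine gap in your execution, stemming from the choice to present $\tau_\D^{-1}Y$ rather than $X$.

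When you apply $\tau_\D$ to your triangle $T_1\to T_0\to \tau_\D^{-1}Y\to T_1[1]$, the terms $\tau_\D T_i$ land in $\add T[1]$, not in $\add T$: indeed $FT\cong T$ with $F=\tau_\D^{-1}[1]$ gives $\tau_\D T\cong T[1]$, so your assertion $\add\tau_\D T=\add T$ is false. Your follow-up ``more careful'' claim that $[T](X[-1],Y)$ is the image of $\Hom(X[-1],\tau_\D T_1)\to\Hom(X[-1],Y)$ via a ``right $\add T$-approximation appearing in the triangle'' therefore fails as stated: the map you have is an $\add T[1]$-approximation, so it detects $[T[1]](X[-1],Y)$, not $[T](X[-1],Y)$. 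Your first paragraph is similarly imprecise: ``the cokernel of $\Hom_\D(T_0,X)\to\Hom_\D(T_1,X)$, taken modulo the image of maps factoring through $T$'' is not a well-defined recipe for $\Hom_{\D/[T]}(\tau_\D^{-1}Y,X)$; you must specify exactly which kernel or image in the long exact sequence equals $[T](\tau_\D^{-1}Y,X)$, and your triangle does not make this transparent.

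The paper avoids the whole issue by presenting $X$ instead: from $T_1\to T_0\to X\xrightarrow{\xi}T_1[1]$ the map $\xi[-1]\colon X[-1]\to T_1$ is a left $\add T$-approximation (since $\Hom(T_0[-1],T)=0$), so the image of $\varphi\colon\Hom(T_1,Y)\to\Hom(X[-1],Y)$, $f\mapsto f\circ\xi[-1]$, is exactly $[T](X[-1],Y)$. Serre duality turns $D\varphi$ into $\phi\colon\Hom(\tau_\D^{-1}Y,X)\to\Hom(\tau_\D^{-1}Y,T_1[1])$, $g\mapsto \xi g$, whose kernel is $[T](\tau_\D^{-1}Y,X)$ because $T_0\to X$ is a right $\add T$-approximation. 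Then $D[T](X[-1],Y)\simeq D\,\mathrm{Im}\,\varphi\simeq\mathrm{Im}\,\phi\simeq\Hom(\tau_\D^{-1}Y,X)/\mathrm{Ker}\,\phi=\Hom_{\D/[T]}(\tau_\D^{-1}Y,X)$. No $\tau_\D$ is ever applied to $T$, so the $\add T$ versus $\add T[1]$ confusion never arises. If you want to salvage your route, either shift your $\tau_\D$-image triangle down by one so the middle terms genuinely lie in $\add T$, or—simpler—present $X$ from the start.
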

\begin{proof}
Since $T$ is a cluster-tilting object, by Theorem \ref{FT}, we know there exists a triangle
$$T_1 \longrightarrow T_0 \longrightarrow X \s{\xi}\longrightarrow T_1[1]$$
 in $\D$ with $T_{0}$ and $T_{1}$ in $\add T$. Applying $\Hom_\D(-, Y)$ to it, we have a map
\begin{eqnarray*}
\varphi :  \Hom_{\D}(T_{1}, Y) & \longrightarrow & \Hom_{\D}(X[-1],Y) \\
 f & \longmapsto & f\xi[-1].
\end{eqnarray*}
It is easy to see that $\mbox{Im}\varphi\simeq [T](X[-1], Y)$. Since the category $\D$ has a Serre functor $\mathbb{S}$, we have isomorphisms
$$D\Hom_{\D}(T_{1}, Y)\simeq \Hom_{\D}(\mathbb{S}^{-1}Y, T_{1})\simeq \Hom_{\D}(\tau_\D^{-1}Y, T_{1}[1]),$$
$$D\Hom_{\D}(X[-1], Y)\simeq \Hom_{\D}(\mathbb{S}^{-1}Y, X[-1])\simeq \Hom_{\D}(\tau_\D^{-1}Y, X).$$

$$\xymatrix@!@C=1cm@R=0.01cm{
D\Hom_{\D}(X[-1], Y)\ar[r]^{D\varphi}\ar[d]^{\simeq}  &  D\Hom_{\D}(T_{1}, Y)\ar[d]_{\simeq} \\
\Hom_{\D}(\tau_\D^{-1}Y, X)\ar[r]^{\exists\phi}        &   \Hom_{\D}(\tau_\D^{-1}Y, T_{1}[1])  }
$$

Thus, $D\varphi$ is isomorphic to a map
\begin{eqnarray*}
\phi:  \Hom_{\D}(\tau_\D^{-1}Y, X) & \longrightarrow & \Hom_{\D}(\tau_\D^{-1}Y, T_{1}[1]) \\
 g & \longmapsto & \xi g.
\end{eqnarray*}
Note that $\mbox{Ker}{\phi}=[T](\tau_\D^{-1}Y, X)$.
Hence, we have isomorphisms
\begin{eqnarray}
D[T](X[-1], Y)\simeq D\mbox{Im}\varphi \simeq\mbox{Im}D\varphi &\simeq &\mbox{Im}{\phi}   \nonumber \\
  & \simeq &  \Hom_{\D}(\tau_\D^{-1}Y, X)/\mbox{Ker}{\phi}   \nonumber \\
  & \simeq  & \Hom_{\D}/[T](\tau_\D^{-1}Y, X).   \nonumber
\end{eqnarray}
\end{proof}
We keep the notation of bijection (\ref{bijection}). Let $X$ be an object in $\D$, we define $$|\wte{X}|= |(\overline{X'}, \overline{X''[-1]})|\coloneqq |\overline{X'}|+|\overline{X''[-1]}|,$$
it is easy to see that $|\wte{X}|=|X|$.
\medskip

From now on, we denote by $T[1]$-rigid$\D$ (respectively, $T[1]$-tilt$\D$) the set of isomorphism classes of basic $T[1]$-rigid (respectively, $T[1]$-cluster tilting) objects in $\D$, by $T[1]$-tilt$_T\D$ the set of objects in $T[1]$-tilt$\D$ which do not have non-zero direct summands in $\add T[1]$, and by
$$T[1]\mbox{-}{\rm tilt}_T^0\D:=\{\ X\in T[1]\mbox{-}{\rm tilt}_T\D\ |\ I_X=0\}.$$
 On the other hand, we denote by tilt$\la$ the set of isomorphism classes of basic tilting modules in $\mod\la$. The following correspondences are our main result in this section.

\begin{thm}\label{thm:mainthm}
Let $\D$ be a $k$-linear, Hom-finite triangulated category with a Serre functor $\mathbb{S}$ and a cluster-tilting object $T$, and let $\la={\rm End}^{op}_{\D}(T)$. Then the bijection $\widetilde{(-)}$ in (\ref{bijection}) induces the following bijections
\begin{eqnarray*}
T[1]\mbox{-}{\rm rigid}\D & \s{(a)}\longleftrightarrow & \tau\mbox{-}{\rm rigid}\la,\\
T[1]\mbox{-}{\rm tilt}\D & \s{(b)}\longleftrightarrow & {\rm s}\tau\mbox{-}{\rm tilt}\la,\\
T[1]\mbox{-}{\rm tilt}_T\D & \s{(c)}\longleftrightarrow & \tau\mbox{-}{\rm tilt}\la,\\
\mbox{and }T[1]\mbox{-}{\rm tilt}_T^0\D & \s{(d)}\longleftrightarrow & {\rm tilt}\la.
\end{eqnarray*}
\end{thm}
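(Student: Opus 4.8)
The plan is to prove the four bijections in the order (a) $\Rightarrow$ (b) $\Rightarrow$ (c) $\Rightarrow$ (d), using (a) as the technical core and deducing the rest by counting indecomposable summands and reading off projectivity/tilting conditions through the equivalence $\overline{(-)}\colon \D/[T[1]]\xrightarrow{\sim}\mod\la$ of Theorem \ref{cto}. Since $\widetilde{(-)}$ is already a bijection $\mathrm{iso}\,\D\leftrightarrow\mathrm{iso}(\mod\la)\times\mathrm{iso}(\proj\la)$ preserving the number of summands ($|\widetilde X|=|X|$), what must be checked in each case is only that it restricts to a bijection between the indicated subsets, i.e.\ that the relevant structural condition on one side corresponds exactly to the condition on the other.

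For part (a), write $X=X'\oplus X''$ with $X''$ the maximal summand in $\add T[1]$, so $\widetilde X=(\overline{X'},\overline{X''[-1]})$. I would show the two clauses of the $\tau$-rigid pair condition separately. The condition $\Hom_\la(\overline{X''[-1]},\overline{X'})=0$, i.e.\ $\Hom_\la(P,\overline{X'})=0$ for the projective $P=\overline{X''[-1]}$, should translate — via the equivalence $\eqref{equi}$ and the fact that $\Hom_\la(T,-)$ sends the triangle building $X'$ out of $T\ast T[1]$ to a projective presentation — into $[T[1]](X'',X'[1])=0$ (here one uses that $\Hom_\la(P,-)$ picks out the top, and that $X''\in\add T[1]$). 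For the clause $\Hom_\la(\overline{X'},\tau_\la\overline{X'})=0$, the key input is Lemma \ref{Palu}: applied with suitable shifts it gives $\Hom_{\D/[T]}(\tau_\D^{-1}Y,Z)\simeq D[T](Z[-1],Y)$, and after the standard translation $\tau_\la\overline{X'}\simeq\overline{\tau_\D X'}$ (or $\overline{X'[-1]}$-type) under $\eqref{equi}$, vanishing of $\Hom_\la(\overline{X'},\tau_\la\overline{X'})$ becomes $[T[1]](X',X'[1])=0$. Combining, $X$ is $T[1]$-rigid iff $\widetilde X$ is a $\tau$-rigid pair. I expect this step to be the main obstacle: one must be careful to identify $[T[1]](-,-)$ (morphisms factoring through $\add T[1]$) with the Ext-type obstruction on $\mod\la$, keeping track of which summands land in $\add T[1]$ and handling the interaction between the $X'$-part and the $X''$-part; Proposition \ref{prop:T[1]-rigid} and Theorem \ref{FT} will be used repeatedly here.

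Parts (b), (c), (d) then follow formally. For (b): by Proposition \ref{uniqueness}(a) a basic $\tau$-rigid pair $(M,P)$ with $P=\la e$ is support $\tau$-tilting iff $M$ is a $\tau$-tilting $(\la/\la e\la)$-module iff $|M|+|P|=|\la|$; since $|\widetilde X|=|X|$ and $|T|=|\la|$, this matches exactly the condition $|X|=|T|$ defining $T[1]$-cluster tilting, so (a) restricts to (b). For (c): $T[1]\text{-}\mathrm{tilt}_T\D$ is by definition the subset of $T[1]\text{-}\mathrm{tilt}\D$ with no summand in $\add T[1]$, i.e.\ with $X''=0$, i.e.\ with $\widetilde X=(\overline{X'},0)$; under (b) these are exactly the support $\tau$-tilting pairs $(M,0)$, which by Proposition \ref{uniqueness}(b) (uniqueness of $P$) and the definition are precisely the $\tau$-tilting modules. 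For (d): restrict (c) further by the condition $I_X=0$; by Theorem \ref{thm:projdim}, $I_X=0$ for $X$ with no summand in $\add T[1]$ is equivalent to $\overline{X}=\Hom_\D(T,X)$ having finite (hence, by Theorem \ref{cto}, projective dimension $\le 1$) projective dimension, and a $\tau$-tilting module of projective dimension $\le 1$ is exactly a tilting module (a classical fact, also recoverable from $\tau$-tilting theory). Hence (c) restricts to a bijection $T[1]\text{-}\mathrm{tilt}_T^0\D\leftrightarrow\mathrm{tilt}\la$, completing the proof.
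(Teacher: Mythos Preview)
Your plan is correct and matches the paper's proof closely: part (a) is the core, established via Lemma~\ref{Palu} together with the identification $\tau_\la\overline{X'}\simeq\overline{\tau_\D X'}$ (from \cite{IY}) and the autoequivalence $F=\tau_\D^{-1}[1]$ of Theorem~\ref{FT}; parts (b)--(d) then follow exactly as you outline, by counting summands and invoking Theorem~\ref{thm:projdim} for (d).

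Two small remarks. First, for the projective clause $\Hom_\la(\overline{X''[-1]},\overline{X'})=0$, your direct route through the equivalence $\D/[T[1]]\simeq\mod\la$ works and is in fact slightly simpler than the paper's: since $X''[-1]\in\add T$ and $\Hom_\D(T,T[1])=0$, one has $\Hom_\la(\overline{X''[-1]},\overline{X'})\simeq\Hom_\D(X''[-1],X')=\Hom_\D(X'',X'[1])=[T[1]](X'',X'[1])$, the last equality because $X''\in\add T[1]$. The paper instead reaches the same identity by passing through Serre duality and the $F$-invariance of $\add T$; either route is fine. (Your phrase ``$\Hom_\la(P,-)$ picks out the top'' is not quite the right heuristic, but the computation goes through.) Second, Proposition~\ref{prop:T[1]-rigid} is not needed in the proof; the paper does not invoke it, and you will find that Lemma~\ref{Palu} plus Theorem~\ref{FT} suffice for all the translations in (a).
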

\begin{proof}
 By Corollary 6.5(3)(iii) in \cite{IY}, if $M\in\D$ has no nonzero indecomposable direct summands of $T[1]$, then
 $$\overline{\tau_\D M}\simeq \tau\overline{M}.$$ Combining this with Lemma \ref{Palu}, for any object $X$ in $\D$, we have
$$\Hom_\la(\overline{X'}, \tau\overline{X'})=\Hom_\la(\overline{X'}, \overline{\tau_\D X'})\simeq {\rm Hom}_{\D/[T[1]]}(X, \tau_\D X')\simeq D[T[1]](\tau_\D X'[-1], \tau_\D X).$$
Further, by Theorem \ref{FT}, we have
\begin{equation}\label{equi:2}
\Hom_\la(\overline{X'}, \tau\overline{X'})\simeq D[FT[1]](F\tau_\D X'[-1], F\tau_\D X)\simeq D[T[1]](X', X[1]).
\end{equation}
In the similar way, we know $$\Hom_\la(\overline{X''[-1]}, \overline{X'})\simeq D[T[1]](X[-1], \tau_\D X''[-1])=D[T[1]](X[-1], F^{-1}X'').$$
Note that $F^{-1}X''\in \add T[1]$. Then
\begin{eqnarray}\label{equi:3}
\Hom_\la(\overline{X''[-1]}, \overline{X'}) & \simeq & D\Hom_\D(X[-1], F^{-1}X'') \nonumber \\
 & \simeq & \Hom_\D(F^{-1}X'', \mathbb{S}X[-1]) \qquad \ \text{(Serre duality)} \nonumber \\
 & \simeq & [T[1]](F^{-1}X'', \tau_\D X)        \nonumber \\
 & \simeq & [T[1]](X'', X[1]). \qquad \qquad\quad \text{(Theorem \ref{FT})}
\end{eqnarray}

(a) By equalities (\ref{equi:2}) and (\ref{equi:3}), we know that $X$ is a $T[1]$-rigid object in $\D$ if and only if $$[T[1]](X', X[1])=[T[1]](X'', X[1])=0$$ if and only if $(\overline{X'}, \overline{X''[-1]})$ is a $\tau$-rigid pair for $\la$.
\medskip

(b) Note that $|\wte{X}|=|(\overline{X'}, \overline{X''[-1]})|=|X|$ and $|\la|=|T|$. We know that (a) induces a bijection between $T[1]$-cluster tilting objects in $\D$ and support $\tau$-tilting pairs for $\la$. By Proposition \ref{uniqueness}(b), we have proved the assertion.
\medskip

(c) This assertion is clear.
\medskip

(d) By Theorem \ref{thm:projdim}, we only need to show that $\tau$-tilting modules whose projective dimension are at most one are precisely tilting modules.
This is immediate from the fact that if the projective dimension of a $\la$-module $M$ is at most one, then $M$ is $\tau$-rigid if and only if it is rigid, i.e. Ext$_\la^1(M, M)=0$ (using AR duality, see \cite{ASS}).
\end{proof}

We define
 $${\rm c}\mbox{-}{\rm tilt}_T^0\D:=\{\ X\in {\rm c}\mbox{-}{\rm tilt}_T\D\ |\ I_X=0\}$$
and end this section with the following direct consequences.
\begin{cor}\label{cor:maincor1}
Let $\D$ be a $k$-linear, Hom-finite triangulated category with a Serre functor and a cluster-tilting object $T$, and let $\la={\rm End}^{op}_{\D}(T)$. Then we have the following.
\begin{enumerate}[(1)]
\item For any $T[1]$-rigid object $X$ in $\D, |X|\leq |T|$. In particular, for any maximal rigid object $X$ in $\D, |X|\leq |T|$.
\item Any $T[1]$-rigid object in $\D$ is a direct summand of some $T[1]$-cluster tilting object in $\D$.
\item Any basic almost $T[1]$-cluster tilting object in $\D$ is a direct summand of exactly two basic $T[1]$-cluster tilting objects in $\D$.
\item If $\D$ is 2-CY, then we have bijections \begin{eqnarray*}
{\rm rigid}\D &  \longleftrightarrow & \tau\mbox{-}{\rm rigid}\la,\\
{\rm c}\mbox{-}{\rm tilt}\D &  \longleftrightarrow & {\rm s}\tau\mbox{-}{\rm tilt}\la,\\
{\rm c}\mbox{-}{\rm tilt}_T\D & \longleftrightarrow & \tau\mbox{-}{\rm tilt}\la,\\
\mbox{and }{\rm c}\mbox{-}{\rm tilt}_T^0\D &  \longleftrightarrow & {\rm tilt}\la.
\end{eqnarray*}
\end{enumerate}
\end{cor}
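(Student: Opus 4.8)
The strategy is to deduce everything from Theorem \ref{thm:mainthm} together with the corresponding facts about support $\tau$-tilting modules recalled in Section \ref{sect:2}, transporting them along the bijection $\widetilde{(-)}$. For part (1), note that if $X$ is $T[1]$-rigid then by Theorem \ref{thm:mainthm}(a) the pair $\widetilde{X}=(\overline{X'},\overline{X''[-1]})$ is a $\tau$-rigid pair for $\la$, so $|X|=|\widetilde{X}|=|\overline{X'}|+|\overline{X''[-1]}|\le |\la|=|T|$ because a $\tau$-rigid pair has at most $|\la|$ indecomposable summands (this follows from Theorem \ref{thm:tauBongartz}, which completes any $\tau$-rigid module to a $\tau$-tilting one, combined with Proposition \ref{uniqueness}). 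For the "in particular" clause, I would observe that a maximal rigid object is in particular rigid, hence $T[1]$-rigid by the Remark, so the bound applies. For part (2), again pass to $\widetilde{X}$: it is a $\tau$-rigid pair, and by Proposition \ref{uniqueness}(a) it corresponds to a $\tau$-rigid module over a quotient algebra $\la/\la e\la$; applying Theorem \ref{thm:tauBongartz} (Bongartz-type completion) one completes it to a $\tau$-tilting module, hence to a support $\tau$-tilting pair for $\la$ containing $\widetilde{X}$ as a direct summand, and then Theorem \ref{thm:mainthm}(b) pulls this back to a $T[1]$-cluster tilting object in $\D$ having $X$ as a direct summand. Part (3) is the analogous transport of Theorem \ref{thm:mutationsupport}: a basic almost $T[1]$-cluster tilting object $X$ has $|X|=|T|-1$, so $\widetilde{X}$ is an almost support $\tau$-tilting pair; by Theorem \ref{thm:mutationsupport} it is a direct summand of exactly two basic support $\tau$-tilting pairs, and Theorem \ref{thm:mainthm}(b) converts "exactly two" into the corresponding statement in $\D$ — here one must check that the bijection respects the "direct summand" relation, which is immediate from the explicit form of $\widetilde{(-)}$ in (\ref{bijection}) since it is additive on summands not in $\add T[1]$ and sends $\add T[1]$-summands to projectives.

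For part (4), assume $\D$ is $2$-CY. By Proposition \ref{prop:T[1]-rigid}, in the $2$-CY case an object is $T[1]$-rigid if and only if it is rigid, so $T[1]\text{-rigid}\D=\mathrm{rigid}\D$; similarly a $T[1]$-cluster tilting object is then rigid with $|X|=|T|$, and since in a $2$-CY category rigid objects with the maximal number of summands are exactly the cluster-tilting objects (this is standard, and also follows by comparing the defining conditions once one knows $\mathrm{Ext}^1_\D(T,X)=0\iff\mathrm{Ext}^1_\D(X,T)=0$ via Serre duality), one gets $T[1]\text{-tilt}\D=\mathrm{c}\text{-tilt}\D$ and $T[1]\text{-tilt}_T\D=\mathrm{c}\text{-tilt}_T\D$. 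For the last line one checks that in the $2$-CY situation $I_X=0$ automatically for a rigid $X$, or argues directly that $T[1]\text{-tilt}_T^0\D=\mathrm{c}\text{-tilt}_T^0\D$ by unwinding the definition of $\mathrm{c}\text{-tilt}_T^0\D$; then the four bijections of part (4) are literally the four bijections of Theorem \ref{thm:mainthm} with the left-hand sides renamed. This recovers Theorem 4.1 of \cite{AIR}.

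I expect the only genuine point requiring care is the bookkeeping in parts (2) and (3): one must verify that $\widetilde{(-)}$ carries the "is a basic direct summand of" relation on objects of $\D$ to the "is a direct summand of a pair" relation on $\tau$-rigid pairs in both directions, and that it preserves the count of indecomposable summands, i.e. $|\widetilde{X}|=|X|$ — both are already recorded in the excerpt (the identity $|\widetilde{X}|=|X|$ is noted just before Theorem \ref{thm:mainthm}, and additivity of $\widetilde{(-)}$ on summands is visible from (\ref{bijection})), so this is routine rather than an obstacle. Everything else is a direct application of the $\tau$-tilting machinery of \cite{AIR} transported through Theorem \ref{thm:mainthm}.
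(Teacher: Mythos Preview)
Your proof is correct and follows essentially the same approach as the paper: transport everything through Theorem~\ref{thm:mainthm} to the $\tau$-tilting side and invoke the results of \cite{AIR}, with the paper's proof of (4) citing \cite{ZZ} for the fact that a rigid object $X$ with $|X|=|T|$ is cluster-tilting in the 2-CY case (the nontrivial ingredient you call ``standard''). One small slip: your first alternative for the last bijection in (4), that $I_X=0$ holds automatically for rigid $X$ in the 2-CY case, is false --- not every cluster-tilting object maps to a tilting module --- but your second alternative (that the equality of the ${}^0$-sets follows immediately from the already-established equality of the ambient sets by intersecting with the condition $I_X=0$) is correct and suffices.
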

\begin{proof}
(1) This is immediate from the bijection (a) in Theorem \ref{thm:mainthm}.
\medskip

(2) Let $X$ be a $T[1]$-rigid object in $\D$, then $\wte{X}= (\overline{X'}, \overline{X''[-1]})$ is a $\tau$-rigid pair for $\la$. We may assume that $\overline{X''[-1]}=\la e$, where $e$ is an idempotent of $\la$. By Proposition \ref{uniqueness}, we know $\overline{X'}$ is a $\tau$-rigid $(\la/\langle e\rangle)$-module. Using Theorem \ref{thm:tauBongartz}, we know there exists a $\tau$-tilting $(\la/\langle e\rangle)$-module $S$ such that $\overline{X'}$ is a direct summand of $S$. Thus, $(S, \la e)$ is a support $\tau$-tilting pair for $\la$ and $(\overline{X'}, \la e)$ is a direct summand of $(S, \la e)$. Hence, it follows from Theorem \ref{thm:mainthm} and Proposition \ref{uniqueness} that $X$ is a direct summand of some $T[1]$-cluster tilting object in $\D$.
\medskip

(3) This assertion follows from Theorem \ref{thm:mutationsupport} and the bijection (b) in Theorem \ref{thm:mainthm} immediately.
\medskip

(4) This assertion follows from Proposition \ref{prop:T[1]-rigid} and the fact that an object $X$ in $\D$ is cluster-tilting if and only if it is rigid and $|X|=|T|$ (see \cite{ZZ}).
\end{proof}

\begin{rem}
\begin{itemize}
\item In a triangulated category $\D$ with cluster-tilting objects and a Serre functor $\mathbb{S}$, any cluster-tilting object is maximal rigid. Conversely, any maximal rigid object $M$ satisfying $\mathbb{S}(M)\simeq M[2]$ is cluster-tilting (see \cite{AIR,ZZ,YZZ}). But we do not know whether any maximal rigid object in $\D$ is cluster-tilting.
\item The first 3 bijections in Corollary \ref{cor:maincor1}(4) are known by \cite{AIR}.
\end{itemize}
\end{rem}

\section{Mutation of $T[1]$-cluster tilting objects and an application}\label{sect:4}
As previous, we assume that $\D$ is a $k$-linear, Hom-finite triangulated category with a cluster-tilting object $T$ and a Serre functor $\mathbb{S}$, and $\la=\End^{op}_{\D}(T)$ is the opposite algebra of the endomorphism algebra of $T$.
 In this section, we first introduce a natural partial order on the set of $T[1]$-cluster tilting objects, then prove our main result on complements for almost $T[1]$-cluster tilting objects in $\D$. As an application, we give a partial answer to a question posed in \cite{AIR}.

\subsection{A partial order}
For an object $M$ in $\D$, we denote by $M\ast[T[1]]$ a full subcategory of $\D$ whose objects are all such $X\in \D$ with triangles $$M_X\longrightarrow X \s{\eta_X}\longrightarrow C_X \longrightarrow M_X[1],$$
where $M_X\in \add M$ and the morphism $\eta_X$ factors through $\add T[1]$.
\begin{defn}
For $M, N\in T[1]${\rm -tilt}$\D$, we write $M\geq N$ if $M\ast[T[1]]\supseteq N\ast[T[1]]$.
\end{defn}
The main result in this subsection is the following.
\begin{thm}
The relation $\geq$ gives a partial order on $T[1]${\rm -tilt}$\D$.
\end{thm}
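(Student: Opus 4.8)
The plan is to transport the partial order $\geq$ on $T[1]$-tilt$\D$ to the known partial order on $\mathrm{s}\tau$-tilt$\la$ via the bijection $\widetilde{(-)}$ of Theorem \ref{thm:mainthm}(b), and then invoke the fact (from \cite{AIR}, encoded in Theorem \ref{thm:f-tors} and the subsequent definition) that $\geq$ on $\mathrm{s}\tau$-tilt$\la$ is a partial order. Concretely, I would first establish the key translation statement: for $M, N \in T[1]$-tilt$\D$ with $\widetilde{M} = (\overline{M'}, \overline{M''[-1]})$ and similarly for $N$, one has $M \ast [T[1]] \supseteq N \ast [T[1]]$ if and only if $\Fac \overline{M'} \supseteq \Fac \overline{N'}$ together with the appropriate containment of the projective parts (equivalently $\langle e_M \rangle \subseteq \langle e_N\rangle$ for the idempotents cutting out the projective summands, so that $N'$-summands in $\add T[1]$ are a sub-collection of those of $M$). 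The heart of this is the claim that, under the equivalence $\overline{(-)} : \D/[T[1]] \xrightarrow{\sim} \mod\la$ of Theorem \ref{cto}, the subcategory $M \ast [T[1]]$ of $\D$ corresponds to $\Fac \overline{M'}$ in $\mod\la$: an object $X$ sits in a triangle $M_X \to X \xrightarrow{\eta_X} C_X \to M_X[1]$ with $\eta_X$ factoring through $\add T[1]$ precisely when $\overline{X}$ is a quotient of $\overline{M_X} \in \add \overline{M'}$, since applying $\Hom_\D(T,-)$ kills the map $C_X$-part living in $\add T[1]$ and turns the triangle into a short exact sequence (or a surjection) $\overline{M_X} \twoheadrightarrow \overline{X}$ in $\mod\la$.

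Once that dictionary is in place, reflexivity, antisymmetry, and transitivity of $\geq$ on $T[1]$-tilt$\D$ follow formally from the same properties of $\supseteq$ applied to the pairs $(\Fac \overline{M'}, \langle e_M\rangle)$, together with the injectivity of $\widetilde{(-)}$: reflexivity is trivial; transitivity is inherited from transitivity of $\supseteq$; and antisymmetry requires that $M \ast [T[1]] = N \ast [T[1]]$ forces $\widetilde{M} = \widetilde{N}$, hence $M \cong N$ by Theorem \ref{thm:mainthm}(b) — here I use that a support $\tau$-tilting module is recovered from its torsion class $\Fac$ as the Ext-projective generator $P(\Fac \overline{M'})$ by Theorem \ref{thm:f-tors}, and the projective part is then determined uniquely by Proposition \ref{uniqueness}(b). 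So antisymmetry reduces to: equal $\ast$-subcategories give equal $\Fac$'s on the module side, which gives equal support $\tau$-tilting pairs.

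The main obstacle I anticipate is the careful verification of the correspondence $M \ast [T[1]] \leftrightarrow \Fac \overline{M'}$, and in particular handling the projective (i.e. $\add T[1]$) direct summands of $M$ correctly. When $M$ has a nonzero summand $M''$ in $\add T[1]$, the object $M''$ itself lies in $M \ast [T[1]]$ (take $M_X = 0$, $\eta_X = \mathrm{id}$), yet $\overline{M''} = 0$, so the naive statement "$M \ast [T[1]]$ is the preimage of $\Fac \overline{M'}$" needs the right bookkeeping: the subcategory $M \ast [T[1]]$ should be understood as generated by $\Fac \overline{M'}$ in $\mod\la$ together with $\add T[1]$ inside $\D$. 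I would therefore phrase the translation as an equality of the form "$X \in M\ast[T[1]]$ iff $\overline{X} \in \Fac\overline{M'}$ and every indecomposable summand of $X$ lying in $\add T[1]$ lies in $\add M''$", and check that this composite condition is genuinely a partial order by comparing with the product order on $\mathrm{iso}(\mod\la) \times \mathrm{iso}(\proj\la)$ that underlies $\mathrm{s}\tau$-tilt$\la$. A secondary point to be careful about is that $N \ast [T[1]] \subseteq M \ast [T[1]]$ must actually force the projective-part containment and not merely the torsion-class containment; this should come out of the fact that for support $\tau$-tilting pairs the two pieces of data are linked (Proposition \ref{uniqueness}(b)), so no independent argument is needed, but it is the place where one could slip.
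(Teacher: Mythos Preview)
Your overall strategy---transport the relation to $\mathrm{s}\tau\text{-tilt}\,\la$ via $\widetilde{(-)}$ and invoke the known partial order there---is sound and would succeed, but it is \emph{not} the route the paper takes, and one detail in your translation is off.

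\medskip

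\textbf{The confusion to fix.} Your proposed characterization ``$X\in M\ast[T[1]]$ iff $\overline{X}\in\Fac\overline{M'}$ \emph{and} every summand of $X$ in $\add T[1]$ lies in $\add M''$'' is wrong: the second clause should be dropped entirely. \emph{Every} object of $\add T[1]$ lies in $M\ast[T[1]]$, regardless of $M''$, by the same argument you already noticed (take $M_X=0$, $\eta_X=\mathrm{id}$). The clean statement is exactly what the paper proves later as Lemma~\ref{lem:suffnece}: $X\in M\ast[T[1]]$ iff $\overline{X'}\in\Fac\overline{M'}$. Consequently $M\ast[T[1]]\supseteq N\ast[T[1]]$ is equivalent to $\Fac\overline{M'}\supseteq\Fac\overline{N'}$ with no projective-part condition whatsoever (this is Proposition~\ref{prop:twoposets}). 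Antisymmetry then follows because $\Fac\overline{M'}=\Fac\overline{N'}$ forces $\overline{M'}\cong\overline{N'}$ by Theorem~\ref{thm:f-tors}, hence $\overline{M''[-1]}\cong\overline{N''[-1]}$ by Proposition~\ref{uniqueness}(b), hence $M\cong N$ by the bijection. So your worry about independently tracking the projective part was misplaced, and once you drop that clause the argument goes through cleanly.

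\medskip

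\textbf{How the paper argues instead.} The paper gives a direct, self-contained proof of antisymmetry inside $\D$, \emph{without} establishing the dictionary $M\ast[T[1]]\leftrightarrow\Fac\overline{M'}$ first (that comes only afterward as Lemma~\ref{lem:suffnece}). Assuming $M\ast[T[1]]=N\ast[T[1]]$, the paper observes $N\in M\ast[T[1]]$, so there is a triangle $M_N\xrightarrow{a} N\xrightarrow{\eta_N} C_N\to M_N[1]$ with $\eta_N$ factoring through $\add T[1]$. A short diagram chase using rigidity of $T$ and $T[1]$-rigidity of $M$ shows $[T[1]](M,N[1])=0$; by symmetry $[T[1]](N,M[1])=0$; hence $M\oplus N$ is $T[1]$-rigid, and the cardinality bound of Corollary~\ref{cor:maincor1}(1) forces $|M\oplus N|=|M|=|N|=|T|$, so $M\cong N$.

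\medskip

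\textbf{Comparison.} Your route packages antisymmetry as a consequence of the order-isomorphism with $\mathrm{s}\tau\text{-tilt}\,\la$, so it yields Proposition~\ref{prop:twoposets} simultaneously and makes the compatibility with \cite{AIR} transparent; the cost is that you must prove Lemma~\ref{lem:suffnece} first. The paper's route is intrinsic to $\D$: it never leaves the triangulated category, uses only the $T[1]$-rigidity and the numerical bound, and keeps the logical order (theorem before dictionary). Either is fine; just be aware that the paper's argument is the short direct one.
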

\begin{proof}
We only need to show that for $M, N\in T[1]${\rm -tilt}$\D$, $M\geq N$ and $N\geq M$ imply $M=N$. We assume $M\ast [T[1]]=N\ast[T[1]]$. Since $N\in N\ast [T[1]]=M\ast[T[1]]$, there exists a triangle
$M_N\s{a}\longrightarrow N \s{\eta_N}\longrightarrow C_N \longrightarrow M_N[1],$ where $M_N\in \add M$ and $\eta_N$ factors through $\add T[1]$.
$$
\xymatrix@!@C=0.6cm@R=0.6cm{
M_N\ar[r]^a&N\ar[r]^{\eta_N}&C_N\ar[r]&M_N[1]\\
&T_0\ar[u]^{f_2}\ar@{.>}[ul]^b  \\
&M[-1]\ar[u]^{f_1}  }
$$
$\forall f\in [T](M[-1], N),$ there are two morphisms $f_1:M[-1]\rightarrow T_0$ and $f_2:T_0\rightarrow N$ such that $f=f_2f_1$, where $T_0\in \add T$. Thus, $\eta_Nf_2=0$ and there exists $b:T_0\rightarrow M_N$ such that $f_2=ab$. Since $M$ is $T[1]$-cluster tilting, we have $f=f_2f_1=a(bf_1)=0$. This implies that $[T[1]](M, N[1])=0$. Dually, $[T[1]](N, M[1])=0$. Hence $M\oplus N$ is $T[1]$-rigid. By Corollary \ref{cor:maincor1}(1), we have
$$|T|=|M|=|N|\leq |M\oplus N|\leq |T|,$$ which forces that $|M\oplus N|=|M|=|N|$. Therefore, $M=N$.
\end{proof}
The following observation is crucial in this subsection.
\begin{lem}\label{lem:suffnece}
For any two objects $M$ and $X$ in $\D$, $X\in M\ast[T[1]]$ if and only if $\overline{X'}\in \Fac\overline{M'}$ in $\mod\la$.
\end{lem}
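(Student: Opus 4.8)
The plan is to translate the geometric condition $X\in M\ast[T[1]]$ into module-theoretic language via the equivalence $\overline{(-)}\colon \D/[T[1]]\xrightarrow{\sim}\mod\la$ of Theorem \ref{cto}, and then recognize the resulting condition as membership in $\Fac\overline{M'}$. First I would write $X=X'\oplus X''$ with $X''\in\add T[1]$ maximal, and note that since $X''\in\add T[1]$ the defining triangle $M_X\to X\to C_X\to M_X[1]$ with $\eta_X$ factoring through $\add T[1]$ restricts, after applying $\overline{(-)}$, to an exact sequence; the summand $X''$ is killed and only $\overline{X'}$ survives. So without loss of generality I may reduce to the case $X=X'$, i.e. $X$ has no summands in $\add T[1]$.

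For the \textbf{only if} direction, suppose $X\in M\ast[T[1]]$, so there is a triangle $M_X\xrightarrow{a} X\xrightarrow{\eta_X} C_X\to M_X[1]$ with $M_X\in\add M$ and $\eta_X$ factoring through some object of $\add T[1]$. Applying the functor $\Hom_\D(T,-)$, which is homological, to this triangle gives an exact sequence $\overline{M_X}\xrightarrow{\overline{a}}\overline{X}\to\overline{C_X}\to\overline{M_X[1]}$; but $\overline{C_X}$ is a quotient of $\overline{X}$ along a map factoring through $\overline{T[1]}$, and $\Hom_\D(T,T[1])=0$ forces the connecting behaviour so that $\overline{\eta_X}=0$, hence $\overline{a}\colon\overline{M_X}\to\overline{X}$ is surjective. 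Since $\overline{M_X}\in\add\overline{M}=\add\overline{M'}$ (the summand $M''$ of $M$ in $\add T[1]$ maps to $0$), this exhibits $\overline{X}=\overline{X'}$ as a factor module of a finite direct sum of copies of $\overline{M'}$, i.e. $\overline{X'}\in\Fac\overline{M'}$.

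For the \textbf{if} direction, suppose $\overline{X'}\in\Fac\overline{M'}$, so there is an epimorphism $p\colon\overline{M'}^{\,n}\twoheadrightarrow\overline{X'}$ in $\mod\la$. Lift $\overline{M'}^{\,n}$ to $M_0:=M'^{\,n}\in\add M$ and lift $p$ (using fullness of $\overline{(-)}\colon\D/[T[1]]\to\mod\la$) to a morphism $\alpha\colon M_0\to X'$ in $\D$, well-defined modulo $[T[1]]$. Complete $\alpha$ to a triangle $M_0\xrightarrow{\alpha}X'\xrightarrow{\beta}C\to M_0[1]$. Applying $\overline{(-)}$ yields an exact sequence and, since $\overline{\alpha}=p$ is surjective, $\overline{C}$ is a quotient of $\overline{M_0[1]}=0$... more carefully: the long exact sequence gives $\overline{C}\hookrightarrow\overline{M_0[1]}$; but here I should instead argue that $\overline{\beta}=0$ in $\mod\la$, which by the equivalence $\D/[T[1]]\simeq\mod\la$ means exactly that $\beta$ factors through $\add T[1]$. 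Hence the triangle $M_0\to X'\to C\to M_0[1]$ witnesses $X'\in M\ast[T[1]]$, and re-adding the summand $X''\in\add T[1]$ (which trivially lies in $0\ast[T[1]]\subseteq M\ast[T[1]]$, taking the triangle $0\to X''\xrightarrow{\id}X''\to 0$ with $\id_{X''}$ factoring through $\add T[1]$) and taking direct sums of triangles gives $X=X'\oplus X''\in M\ast[T[1]]$.

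The main obstacle I anticipate is the careful bookkeeping of how the summands in $\add T[1]$ behave under $\overline{(-)}$ and under the formation of triangles: one must check that ``$\overline{\beta}=0$'' and ``$\beta$ factors through $\add T[1]$'' are genuinely equivalent (this is precisely the content of Theorem \ref{cto}, that $[T[1]]$ is the kernel ideal of $\overline{(-)}$), and that the liftings can be chosen compatibly so that the resulting triangle really has its second map factoring through $\add T[1]$ rather than merely through a general object. A secondary subtlety is ensuring that taking $n$-fold sums and lifting epimorphisms is legitimate, i.e. that a surjection in $\mod\la$ lifts to a morphism in $\D$ whose cone-map lies in $[T[1]]$ — but this follows formally from the equivalence $\D/[T[1]]\simeq\mod\la$ together with the fact that every object of $\mod\la$ is of the form $\overline{Y}$ for some $Y\in\D$.
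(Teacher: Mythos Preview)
Your proof is correct and takes essentially the same approach as the paper. The only minor difference is in the ``if'' direction: where you invoke faithfulness of the equivalence $\D/[T[1]]\simeq\mod\la$ to conclude that $\overline{\beta}=0$ forces $\beta$ to factor through $\add T[1]$, the paper instead derives this factorization explicitly from the cluster-tilting triangle $T_0\to X'\to T_1[1]\to T_0[1]$ (which amounts to reproving faithfulness in this particular instance).
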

\begin{proof}
(1) If $X\in M\ast[T[1]]$, then there exists a triangle
\begin{equation}\label{equi:4}
M_X \s{g}\longrightarrow X \s{\eta_X}\longrightarrow C_X \longrightarrow M_X[1],
\end{equation}
where $M_X\in \add M$ and the morphism $\eta_X$ factors through $\add T[1]$. Applying $\overline{(\ )}$ to (\ref{equi:4}), we have an exact sequence
\begin{equation*}
\overline{M_X} \s{\overline{g}}\longrightarrow \overline{X'} \s{\overline{\eta_X}}\longrightarrow \overline{C_X} \longrightarrow \overline{M_X[1]},
\end{equation*}
Since $\eta_X$ factors through $\add T[1]$, we have Im$\overline{\eta_X}=0$. Thus $\Im\overline{g}=\text{Ker}\overline{\eta_X}=\overline{X'}$, i.e. $\overline{g}$ is surjective. Because $\overline{M_X}\in \add\overline{M'}$, we obtain $\overline{X'}\in \Fac\overline{M'}$.
\medskip

(2) Conversely, let $\overline{X'}\in \Fac\overline{M'}$ in $\mod\la$, then there is a surjection
$\overline{M'}^n \s{h} \longrightarrow \overline{X'}\longrightarrow 0$. By Theorem \ref{cto}, we have an equivalence \begin{equation}\label{equi}
\D/[T[1]]\s{\sim}\longrightarrow \mod\la,
\end{equation}
thus there exists a morphism $f:(M')^n\rightarrow X'$ in $\D$ such that $\overline{f}=h$. Take a triangle $(M')^n \s{f} \rightarrow X' \s{g}\rightarrow C \rightarrow (M')^n[1]$. Applying $\overline{(\ )}$ to it, we get an exact sequence $$\overline{M'}^n \s{h} \longrightarrow \overline{X'} \s{\overline{g}}\longrightarrow \overline{C} \longrightarrow \overline{(M')^n[1]}.$$
Because $h$ is surjective, we have $\Im\overline{g}=0$. Since $T$ is a cluster-tilting object in $\D$, by Theorem \ref{FT}, we have the following triangles
$$
\xymatrix@!@C=0.4cm@R=0.4cm{
&        T_0\ar[d]^a\\
(M')^n\ar[r]^f  &  X'\ar[r]^{g}\ar[d]^b  & C\ar[r]  &  (M')^n[1],  \\
&T_1[1]\ar[d]\ar@{.>}[ur]^{\exists c}  \\
&T_0[1]}
$$ where $T_0, T_1\in \add T.$ Since the image of $\overline{g}:\Hom_\D(T, X')\longrightarrow\Hom_\D(T, C)$ is zero, we get $ga=0$. Thus $g$ factors through $T_1[1]$. Because $(M')^n\in \add M$, we know $X'\in M\ast[T[1]]$, so is $X$.
\end{proof}
This lemma gives the following direct consequence.
\begin{prop}\label{prop:twoposets}
For any two objects $M$ and $N$ in $\D$, $M\ast[T[1]]\subseteq N\ast[T[1]]$ in $\D$ if and only if $\Fac\overline{M'}\subseteq\Fac\overline{N'}$ in $\mod\la$. In particular, the bijection (b) in Theorem \ref{thm:mainthm} induces an isomorphism of two partially ordered sets.
\end{prop}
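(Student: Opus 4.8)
The plan is to deduce the first statement directly from Lemma \ref{lem:suffnece}, and then package it with Theorem \ref{thm:mainthm}(b) to obtain the poset isomorphism. For the first assertion, suppose $M\ast[T[1]]\subseteq N\ast[T[1]]$. Take any $Y\in\mod\la$ with $Y\in\Fac\overline{M'}$; I would lift $Y$ to an object $Z$ of $\D$ with no direct summands in $\add T[1]$, so that $\overline{Z}=\overline{Z'}=Y$ (this is possible because $\overline{(-)}\colon\D/[T[1]]\to\mod\la$ is dense by Theorem \ref{cto}, and one may discard any $T[1]$-summands without changing the image). By Lemma \ref{lem:suffnece}, $Y=\overline{Z'}\in\Fac\overline{M'}$ forces $Z\in M\ast[T[1]]\subseteq N\ast[T[1]]$, and applying Lemma \ref{lem:suffnece} again gives $Y=\overline{Z'}\in\Fac\overline{N'}$. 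Hence $\Fac\overline{M'}\subseteq\Fac\overline{N'}$. Conversely, if $\Fac\overline{M'}\subseteq\Fac\overline{N'}$, then for any $X\in M\ast[T[1]]$ we have $\overline{X'}\in\Fac\overline{M'}\subseteq\Fac\overline{N'}$ by Lemma \ref{lem:suffnece}, and the same lemma yields $X\in N\ast[T[1]]$; thus $M\ast[T[1]]\subseteq N\ast[T[1]]$.

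For the ``in particular'' clause, recall that by definition $M\geq N$ in $T[1]\text{-tilt}\,\D$ means $M\ast[T[1]]\supseteq N\ast[T[1]]$, and $\overline{M'}\geq\overline{N'}$ in $\sttm$ means $\Fac\overline{M'}\supseteq\Fac\overline{N'}$. The equivalence just proved says precisely that the bijection (b) of Theorem \ref{thm:mainthm}, which sends a $T[1]$-cluster tilting object $M=M'\oplus M''$ to the support $\tau$-tilting pair $(\overline{M'},\overline{M''[-1]})$ and hence the support $\tau$-tilting module $\overline{M'}$, carries $M\geq N$ to $\overline{M'}\geq\overline{N'}$ and back; so it is an isomorphism of partially ordered sets.

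I expect the only subtle point to be the lifting step in the first direction: one must make sure that the object $Z$ realizing a given $Y\in\Fac\overline{M'}$ can be chosen with no summands in $\add T[1]$, so that $\overline{Z'}=\overline{Z}=Y$ and Lemma \ref{lem:suffnece} applies with the correct primed component. This is harmless — given any preimage of $Y$ under the dense functor $\overline{(-)}$, its $\add T[1]$-summands lie in the kernel ideal $[T[1]]$ and can be deleted — but it is the place where a careless argument could go wrong, since Lemma \ref{lem:suffnece} is stated in terms of $\overline{X'}$ rather than $\overline{X}$. Everything else is a formal manipulation of the containments supplied by Lemma \ref{lem:suffnece} together with the definitions of the two partial orders.
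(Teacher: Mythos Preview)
Your proposal is correct and follows the same approach the paper intends: the paper states the proposition as a ``direct consequence'' of Lemma~\ref{lem:suffnece} without further detail, and your argument is precisely the unpacking of that claim. The lifting step you flag is indeed the only place requiring care, and your justification (discard $\add T[1]$-summands, which vanish under $\overline{(-)}$) is the right one.

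One small remark: the forward implication can be shortened by observing that $M\in M\ast[T[1]]$ trivially (via the triangle $M\to M\to 0\to M[1]$), hence $M\in N\ast[T[1]]$, so Lemma~\ref{lem:suffnece} gives $\overline{M'}\in\Fac\overline{N'}$; since $\Fac\overline{N'}$ is closed under factor modules, $\Fac\overline{M'}\subseteq\Fac\overline{N'}$ follows. This avoids the lifting step entirely, but your argument via lifting is equally valid.
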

We introduce the following notion in triangulated categories, which is an analog of Ext-projective modules in module categories.
\begin{defn}
Let $\D$ be a triangulated category and $\T$ be a subcategory of $\D$. We say that an object $X\in \T$ is {\rm relative cluster projective} if there exists a cluster-tilting object $T$ such that $[T[1]](X, \T[1])=0$. In this case, $X$ is also called $T[1]$-{\rm projective}.
\end{defn}
We first give the following interesting observation.
\begin{lem}\label{lem:sumT[1]-proj}
Let $T$ be a cluster-tilting object and $M$ be a basic $T[1]$-cluster tilting object in $\D$. Any indecomposable $T[1]$-projective object in $M\ast[T[1]]$ is a direct summand of $M$.
\end{lem}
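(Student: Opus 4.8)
The plan is to transport the statement across the equivalence $\D/[T[1]]\simeq\mod\la$ of Theorem~\ref{cto} and reduce it to the corresponding fact about Ext-projective objects in the functorially finite torsion class $\Fac\overline{M'}$. First I would let $X$ be an indecomposable $T[1]$-projective object of $M\ast[T[1]]$; by Lemma~\ref{lem:suffnece}, $\overline{X'}\in\Fac\overline{M'}$, so $\overline{X'}$ lies in the torsion class $\T\coloneqq\Fac\overline{M'}$. The key claim is that $\overline{X'}$ is Ext-projective in $\T$, i.e.\ $\Ext_\la^1(\overline{X'},-)|_{\T}=0$. To see this I would take any $N\in\D$ with $\overline{N'}\in\T$ (equivalently $N\in M\ast[T[1]]$), and use the isomorphism $\Hom_\la(\overline{X'},\tau\overline{N'})\simeq D[T[1]](X',N[1])$ established in the proof of Theorem~\ref{thm:mainthm} (the analogue of (\ref{equi:2}), applied to the pair $X',N$ rather than $X',X$), together with AR duality over $\la$, to convert the vanishing of $[T[1]](X,\T[1])$ into $\Ext_\la^1(\overline{X'},\overline{N'})=0$ for all such $N$; since every object of $\T$ arises as some $\overline{N'}$, this gives Ext-projectivity of $\overline{X'}$ in $\T$.

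Next I would invoke Theorem~\ref{thm:f-tors}: the functorially finite torsion class $\T=\Fac\overline{M'}$ has $P(\T)=$ the support $\tau$-tilting module corresponding to $M$ under the bijection of Theorem~\ref{thm:mainthm}(b), which by construction is $\overline{M'}$ (the projective part being $\overline{M''[-1]}$, which plays no role here). The indecomposable Ext-projective objects of $\T$ are precisely the indecomposable summands of $P(\T)=\overline{M'}$, so $\overline{X'}$ is a direct summand of $\overline{M'}$ in $\mod\la$. It remains to lift this back to $\D$: since $\overline{(-)}$ induces the equivalence $\D/[T[1]]\simeq\mod\la$ and both $X$ and $M$ are $T[1]$-rigid (hence have no summands killed by $\overline{(-)}$ beyond those in $\add T[1]$), a direct-summand relation among the images corresponds to one among the objects. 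Here I should be slightly careful about the summands of $M$ lying in $\add T[1]$: an indecomposable $X$ that is $T[1]$-projective and already lies in $\add T[1]$ is a summand of $M$ precisely because $[T[1]](M,X[1])=0$ forces $X\in\add M$ via the defining triangle of $X$ in $M\ast[T[1]]$; the generic case is handled by the equivalence.

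The main obstacle I anticipate is the faithful bookkeeping of the decomposition $X=X'\oplus X''$, $M=M'\oplus M''$ into ``module part'' and ``shifted-projective part,'' and checking that $T[1]$-projectivity of $X$ in $M\ast[T[1]]$ really does translate into the \emph{global} vanishing $\Ext_\la^1(\overline{X'},-)|_{\T}=0$ rather than merely the vanishing against $\overline{M'}$ itself. This requires knowing that $\T=M\ast[T[1]]$ (as subcategories matched under $\overline{(-)}$ via Lemma~\ref{lem:suffnece}), so that testing against all $N\in M\ast[T[1]]$ is the same as testing against all of $\T$. Once that identification is in place the argument is essentially a dictionary translation using Theorem~\ref{thm:f-tors} and the computation already present in the proof of Theorem~\ref{thm:mainthm}. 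An alternative, more self-contained route would avoid Theorem~\ref{thm:f-tors} entirely: given the triangle $M_X\to X\xrightarrow{\eta_X}C_X\to M_X[1]$ witnessing $X\in M\ast[T[1]]$, apply $[T[1]](X,-[1])$-type arguments directly to split off $X$ from $M_X$, using that $\eta_X$ factors through $\add T[1]$ and that $X$ is $T[1]$-projective to show the triangle splits after killing $[T[1]]$; I would keep this as a fallback if the torsion-class route runs into subtleties with the projective summands.
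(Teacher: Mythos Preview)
Your route via the torsion class $\Fac\overline{M'}$ is genuinely different from the paper's, and can be made to work, but the key translation step has a variable swap that breaks the argument as written. The two-variable version of (\ref{equi:2}) reads
\[
\Hom_\la(\overline{A},\tau\overline{B})\simeq D[T[1]](B,A[1]),
\]
not $D[T[1]](A,B[1])$. Consequently, from $[T[1]](X',N'[1])=0$ you obtain $\Hom_\la(\overline{N'},\tau\overline{X'})=0$, and \emph{then} AR duality gives $\Ext^1_\la(\overline{X'},\overline{N'})=0$, which is what you want. As you have written it, the chain $[T[1]](X',N[1])=0\Rightarrow\Hom_\la(\overline{X'},\tau\overline{N'})=0\Rightarrow\Ext^1_\la(\overline{X'},\overline{N'})=0$ fails at the last arrow: AR duality turns $\Hom(\overline{X'},\tau\overline{N'})=0$ into $\Ext^1(\overline{N'},\overline{X'})=0$, the wrong direction for Ext-projectivity. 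With the corrected orientation (and noting that it already suffices to take $N=M$ and invoke the Auslander--Smal{\o} criterion $\Hom(\overline{M'},\tau\overline{X'})=0\Rightarrow\Ext^1(\overline{X'},\Fac\overline{M'})=0$), your torsion-class argument goes through for $X\notin\add T[1]$. For $X=T_i[1]\in\add T[1]$ your sketch again has the morphisms the wrong way round: $T[1]$-projectivity of $X$ gives $\Hom_\D(T_i,M)=0$ (not $[T[1]](M,X[1])=0$), hence $\Hom_\la(\overline{T_i},\overline{M'})=0$, and then Proposition~\ref{uniqueness}(b) forces $\overline{T_i}\in\add\overline{M''[-1]}$, i.e.\ $X\in\add M''\subseteq\add M$.

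The paper's proof avoids the passage to $\mod\la$ altogether. Using the defining triangle $M_X\xrightarrow{g}X\xrightarrow{\eta_X}C_X\to M_X[1]$ with $\eta_X$ factoring through $\add T[1]$, it shows directly that $[T[1]](M,X[1])=0$: any $f\in[T](M[-1],X)$ factors as $f_2f_1$ with $f_2\colon T_0\to X$, and since $\eta_Xf_2=0$ one gets $f_2=ga$, so $f=g(af_1)\in[T[1]](M[-1],M_X)\cdot g=0$ by $T[1]$-rigidity of $M$. Together with $[T[1]](X,M[1])=0$ and $[T[1]](X,X[1])=0$ (both immediate from $T[1]$-projectivity), this makes $M\oplus X$ a $T[1]$-rigid object, and Corollary~\ref{cor:maincor1}(1) gives $|M\oplus X|\le|T|=|M|$, forcing $X\in\add M$. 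This is shorter, treats the $\add T[1]$ case uniformly, and is closer in spirit to your fallback than to your main route.
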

\begin{proof}
Let $X$ be an indecomposable $T[1]$-projective object in $M\ast[T[1]]$. Then
\begin{equation}\label{equi:t[1]-proj1}
\begin{split}
[T[1]](X, M[1]) &=0\\
\mbox{and }[T[1]](X, X[1])&=0.
\end{split}
\end{equation}
Since $X\in M\ast[T[1]]$, we have a triangle $M_X\s{g}\longrightarrow X \s{\eta_X}\longrightarrow C_X \longrightarrow M_X[1],$ where $M_X\in \add M$ and $\eta_X$ factors through $\add T[1]$. $\forall f\in [T](M[-1], X),$ there are two morphisms $f_1:M[-1]\rightarrow T_0$ and $f_2:T_0\rightarrow X$ such that $f=f_2f_1$, where $T_0\in \add T$.
\begin{center}
\begin{tabular}{cc}
$\xymatrix@!@C=0.6cm@R=0.6cm{
&M[-1]\ar[d]^{f_1} \\
&T_0\ar[d]^{f_2}\ar@{.>}[dl]_a  \\
M_X\ar[r]^g&X\ar[r]^{\eta_X}&C_X\ar[r]&M_X[1]
  }$

& \quad
$\xymatrix@!@C=0.6cm@R=0.6cm{
&X[-1]\ar[d]^{h_1} \\
&T_0\ar[d]^{h_2}\ar@{.>}[dl]_b  \\
M_X\ar[r]^g&X\ar[r]^{\eta_X}&C_X\ar[r]&M_X[1]
  }$  \\
\text{Figure 1}  &   \text{Figure 2}
\end{tabular}
\end{center}
Since $\eta_X$ factors through $\add T[1]$, we have $\eta_Xf_2=0$. Thus, there exists $a:T_0\longrightarrow M_X$ such that $f_2=ga$.
Because $M$ is $T[1]$-rigid, we have $f=f_2f_1=g(af_1)=0.$ Therefore
\begin{equation}\label{equi:t[1]-proj2}
[T[1]](M, X[1])=0.
\end{equation}

It follows from the equalities (\ref{equi:t[1]-proj1}) and (\ref{equi:t[1]-proj2}) that $[T[1]](M\oplus X, (M\oplus X)[1])=0$, i.e. $M\oplus X$ is $T[1]$-rigid. By Corollary \ref{cor:maincor1}(1), we know that $X\in \add M$.
\end{proof}
Thanks to this lemma, we can consider the direct sum of one copy of each of the indecomposable $T[1]$-projective objects in $M\ast[T[1]]$ up to isomorphism and denote it by $P_{T[1]}(M\ast[T[1]])$.
\medskip

We put
$$T[1]\mbox{-}{\rm tilt}\D\ast T[1]:=\{M\ast[T[1]]\ |\ M\in T[1]\mbox{-tilt}\D \}.$$ Then we have the following result.
\begin{prop}\label{prop:projective}
(1) For any object $M$ in $T[1]${\rm -tilt}$\D$, $P_{T[1]}(M\ast[T[1]])=M$.

(2) there is a one-to-one correspondence
\begin{equation}\label{bij:1}
T[1]\mbox{-}{\rm tilt}\D\ast T[1]\longrightarrow T[1]\mbox{-}{\rm tilt}\D\end{equation} given by $T[1]\mbox{-}{\rm tilt}\D\ast T[1]\ni \T\mapsto P_{T[1]}(\T)\in T[1]\mbox{-}{\rm tilt}\D.$ The inverse is given by $M\mapsto M\ast[T[1]].$
\end{prop}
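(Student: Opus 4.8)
The plan is to leverage Lemma~\ref{lem:suffnece} together with the already-established bijection (b) of Theorem~\ref{thm:mainthm} and the torsion-class machinery of Adachi--Iyama--Reiten. For part (1), the statement $P_{T[1]}(M\ast[T[1]])=M$ should be proved by showing two inclusions of summands. First, by Lemma~\ref{lem:sumT[1]-proj}, every indecomposable $T[1]$-projective object in $M\ast[T[1]]$ is a direct summand of $M$, so $P_{T[1]}(M\ast[T[1]])$ is a direct summand of $M$. For the reverse, I would show that every indecomposable direct summand $M_i$ of $M$ is itself $T[1]$-projective in $M\ast[T[1]]$: since $M$ is $T[1]$-cluster tilting, $[T[1]](M,M[1])=0$, hence in particular $[T[1]](M_i,M[1])=0$, and because every $X\in M\ast[T[1]]$ fits in a triangle $M_X\to X\to C_X\to M_X[1]$ with $\eta_X$ factoring through $\add T[1]$, a factorization argument exactly like the one in the proof of Lemma~\ref{lem:sumT[1]-proj} (pushing a morphism $M_i\to X[1]$ back through $M_X[1]$ and using $[T[1]](M_i,M[1])=0$) gives $[T[1]](M_i,(M\ast[T[1]])[1])=0$. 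Thus each $M_i$ lies in $P_{T[1]}(M\ast[T[1]])$, and combining the two directions with basicness yields equality.

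For part (2), I would first check that the two assignments are well-defined and mutually inverse on the level of the stated sets. That $M\mapsto M\ast[T[1]]$ lands in $T[1]\mbox{-tilt}\D\ast T[1]$ is by definition. That $\T\mapsto P_{T[1]}(\T)$ is well-defined requires knowing that for $\T=M\ast[T[1]]$ with $M\in T[1]\mbox{-tilt}\D$, the object $P_{T[1]}(\T)$ is again $T[1]$-cluster tilting; but part (1) identifies it with $M$, so this is automatic. The composite $M\mapsto M\ast[T[1]]\mapsto P_{T[1]}(M\ast[T[1]])=M$ is the identity by part (1). For the other composite, given $\T=M\ast[T[1]]$ we get $P_{T[1]}(\T)=M$ and then $M\ast[T[1]]=\T$, so it is also the identity; the only subtlety is that the representation $\T=M\ast[T[1]]$ determines $M$ uniquely, which again follows from part (1) since $M=P_{T[1]}(\T)$ is intrinsic to $\T$. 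Hence (\ref{bij:1}) is a bijection.

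Alternatively, and perhaps more cleanly, one can deduce the whole proposition by transporting Theorem~\ref{thm:f-tors} across the established dictionary: by Lemma~\ref{lem:suffnece} and Proposition~\ref{prop:twoposets}, the assignment $M\ast[T[1]]\mapsto \Fac\overline{M'}$ identifies $T[1]\mbox{-tilt}\D\ast T[1]$ with $\mathrm{f\text{-}tors}\la$, under which $P_{T[1]}(M\ast[T[1]])$ should correspond to $P(\Fac\overline{M'})$; then Theorem~\ref{thm:f-tors} and bijection (b) of Theorem~\ref{thm:mainthm} give everything at once. I would mention this as a remark but carry out the direct argument above to keep the proof self-contained.

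The main obstacle I anticipate is part (1), specifically verifying that each indecomposable summand $M_i$ of $M$ is genuinely $T[1]$-projective in the whole subcategory $M\ast[T[1]]$, not merely that $[T[1]](M_i,M[1])=0$. One must run the diagram chase over an arbitrary $X\in M\ast[T[1]]$, using the defining triangle of $X$ and the cluster-tilting triangle $T_1[1]\to T_0\to X$ from Theorem~\ref{FT} to convert a would-be nonzero element of $[T[1]](M_i,X[1])$ into one of $[T[1]](M_i,M[1])$ or $[T[1]](M_i,M_i[1])$, both of which vanish. Making this factorization precise — i.e.\ checking that the relevant composites land in $\add M$ after the shift — is the one genuinely technical point; everything else is bookkeeping with the already-proved lemmas.
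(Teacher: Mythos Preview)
Your proposal is correct and follows essentially the same approach as the paper. The paper's proof of (1) invokes Lemma~\ref{lem:sumT[1]-proj} for one inclusion and then simply says ``we can use similar arguments as in the proof of Lemma~\ref{lem:sumT[1]-proj}'' for the reverse inclusion, while for (2) it just writes ``the assertion follows from (1) immediately''---so your detailed account of the factorization argument and of the mutual-inverse bookkeeping is exactly what the paper leaves implicit. Your anticipated obstacle is slightly overstated: the diagram chase needs only the defining triangle of $X\in M\ast[T[1]]$ together with rigidity of $T$ (so that any map $T_0\to X\xrightarrow{\eta_X}C_X$ vanishes), not an additional cluster-tilting triangle from Theorem~\ref{FT}.
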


\begin{proof}
(1) Let $X$ be an indecomposable $T[1]$-projective object in $M\ast[T[1]]$. By Lemma \ref{lem:sumT[1]-proj}, we know $X\in \add M$. On the other hand, we can use similar arguments as in the proof of Lemma \ref{lem:sumT[1]-proj} to show that each direct summand of $M$ is a $T[1]$-projective object in $M\ast[T[1]]$.
\medskip

(2) The assertion follows from (1) immediately.
\end{proof}
With the notation of the above discussion, we give the following result.
\begin{thm}The bijection in Proposition \ref{prop:projective} is compatible with bijection in Theorem \ref{thm:f-tors}. In other words,
we have a commutative diagram
$$\xymatrix@!@C=1.6cm@R=0.1cm{
T[1]\mbox{-}{\rm tilt}\D\ast T[1]\ar[r]^{P_{T[1]}(-)}_{\rm 1:1}\ar[d]_{{\rm Hom}_\D(T, -)}^{\rm 1:1}  &  T[1]\mbox{-}{\rm tilt}\D\ar[d]^{{\rm Hom}_\D(T, -)}_{\rm 1:1} \\
{\rm f}\mbox{-}{\rm tors}\la\ar[r]^{P(-)}_{\rm 1:1}        &   {\rm s}\tau\mbox{-}{\rm tilt}\la  }
$$
in which each map is a bijection. The upper horizontal map is given in Proposition \ref{prop:projective}, the lower horizontal map is given in Theorem \ref{thm:f-tors} and the right vertical map is given in Theorem \ref{thm:mainthm}.
\end{thm}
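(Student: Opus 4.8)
The plan is to split the assertion into three parts: show that the left-hand vertical map $\Hom_\D(T,-)$ is well defined, i.e. that its image lies in ${\rm f}\mbox{-}{\rm tors}\la$; show that the square commutes; and then observe that, since the other three edges are already known to be bijections, the left-hand vertical map is forced to be a bijection as well.

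First I would pin down the image of the left vertical map. Fix $M \in T[1]\mbox{-}{\rm tilt}\D$ and write $M = M' \oplus M''$ with $M''$ the maximal direct summand lying in $\add T[1]$. Applying $\overline{(-)} = \Hom_\D(T,-)$ to the subcategory $M\ast[T[1]]$ produces the full subcategory $\{\,\overline{X} \mid X \in M\ast[T[1]]\,\}$ of $\mod\la$; since $\Hom_\D(T,T[1]) = 0$ we have $\overline{X} = \overline{X'}$, and Lemma \ref{lem:suffnece} says that $X \in M\ast[T[1]]$ if and only if $\overline{X'} \in \Fac\overline{M'}$. Using that the equivalence $\D/[T[1]] \simeq \mod\la$ of Theorem \ref{cto} is essentially surjective on objects, every object of $\Fac\overline{M'}$ arises as some such $\overline{X'}$, so the image is exactly the subcategory $\Fac\overline{M'}$. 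By bijection (b) of Theorem \ref{thm:mainthm}, $\overline{M'}$ is a support $\tau$-tilting $\la$-module, whence $\Fac\overline{M'}$ is a functorially finite torsion class by Theorem \ref{thm:f-tors}. Thus the left vertical map is well defined, and it sends $M\ast[T[1]]$ to $\Fac\overline{M'}$.

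Next I would check commutativity by chasing an arbitrary object $M\ast[T[1]]$ of $T[1]\mbox{-}{\rm tilt}\D\ast T[1]$ around the square. Going across the top and then down: $P_{T[1]}(M\ast[T[1]]) = M$ by Proposition \ref{prop:projective}(1), and the right vertical map sends $M$ to the support $\tau$-tilting module $\overline{M'}$ (the projective part $\overline{M''[-1]}$ being determined by Proposition \ref{uniqueness}(b)). Going down and then across the bottom: the left vertical map sends $M\ast[T[1]]$ to $\Fac\overline{M'}$ by the previous paragraph, and the map $P(-)$ of Theorem \ref{thm:f-tors} sends $\Fac\overline{M'}$ to $P(\Fac\overline{M'}) = \overline{M'}$, since $\Fac(-)$ and $P(-)$ are mutually inverse on support $\tau$-tilting modules and functorially finite torsion classes. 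Both routes land on $\overline{M'}$, so the square commutes.

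Finally, $P_{T[1]}(-)$ is a bijection by Proposition \ref{prop:projective}(2), the right vertical $\Hom_\D(T,-)$ is a bijection by Theorem \ref{thm:mainthm}(b), and $P(-)$ is a bijection by Theorem \ref{thm:f-tors}; commutativity of the square then forces the left vertical $\Hom_\D(T,-)$ to be a bijection as well, which finishes the proof. I do not expect a genuine obstacle here: all of the substance is already contained in Lemma \ref{lem:suffnece} (which identifies the image of the left vertical arrow) and Proposition \ref{prop:projective} (which evaluates $P_{T[1]}$), and the remainder is a routine diagram chase. The only point that needs a little care is confirming that applying $\overline{(-)}$ to $M\ast[T[1]]$ yields all of $\Fac\overline{M'}$ rather than merely a subcategory of it, which is exactly where essential surjectivity of the equivalence $\D/[T[1]] \simeq \mod\la$ is used.
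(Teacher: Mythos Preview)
Your proposal is correct and follows essentially the same approach as the paper: both identify $\Hom_\D(T,M\ast[T[1]])$ with $\Fac\overline{M'}$ via Lemma~\ref{lem:suffnece}, and then assemble the diagram from the established bijections. The paper is terser (it asserts bijectivity of the left vertical directly from Theorem~\ref{thm:f-tors} and calls commutativity ``easy to see''), whereas you spell out the diagram chase and deduce bijectivity of the left vertical from commutativity and the other three edges; these are only cosmetic differences in presentation.
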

\begin{proof}We consider the left vertical map. For any $M\ast[T[1]]$ in $T[1]\mbox{-}{\rm tilt}\D\ast T[1]$, where $M\in T[1]\mbox{-}{\rm tilt}\D$, by Lemma \ref{lem:suffnece}, we get
$${\rm Hom}_\D(T, M\ast[T[1]])=\Fac\overline{M'}\in {\rm f}\mbox{-}{\rm tors}\la.$$
By Theorem \ref{thm:f-tors}, we know that the left vertical map is bijective.
It is easy to see that this diagram commutes.
\end{proof}

\subsection{Mutation of $T[1]$-cluster tilting objects}
Let $\D$ be a $k$-linear, Hom-finite triangulated category with a cluster-tilting object $T$ and a Serre functor $\mathbb{S}$, and let $\la=\End^{op}_{\D}(T)$ be the opposite algebra of the endomorphism algebra of $T$.
\medskip

Recall that for an almost $T[1]$-cluster tilting object $U$ in $\D$, by Corollary \ref{cor:maincor1} we know that there are two non-isomorphic $T[1]$-cluster tilting objects $M=U\oplus X$ and $N=U\oplus Y$ in $\D$. Under bijections in Theorem \ref{thm:mainthm}, we get two support $\tau$-tilting pairs $\widetilde{M}=(\overline{M'}, \overline{M''[-1]})$ and $\widetilde{N}$ for $\la$ which have $\widetilde{U}$ as a direct summand. By Proposition \ref{prop:criterion}, we know that either $\overline{M'} > \overline{N'}$ or $\overline{M'} < \overline{N'}$. Using Proposition \ref{prop:twoposets}, we know that either $M>N$ or $M<N$. Thus, we can introduce the following notion.

\begin{defn}
For an almost $T[1]$-cluster tilting object $U$ in $\D$, by Corollary \ref{cor:maincor1} and Proposition \ref{prop:twoposets}, we know that there are two $T[1]$-cluster tilting objects $M=U\oplus X$ and $N=U\oplus Y$ in $\D$ satisfying $M>N$, where $X$ and $Y$ are indecomposable. In this case, we call $(M, N)$ an $U$-{\rm mutation pair} and $X$ and $Y$ two {\rm complements} to $U$. In this section, we also say that $N$ is a {\rm left mutation} of $M$ and $M$ is a {\rm right mutation} of $N$ and we write $N=\mu_X^L(M)$ and $M=\mu_Y^R(N)$.
\end{defn}
Given an almost $T[1]$-cluster tilting object in $\D$, the main result in this subsection shows that starting with a complement, we can calculate the other one by an exchange triangle, which is constructed from a left approximation or a right approximation.
\begin{thm}\label{thm:mutation}
Let $M=U\oplus X$ be a basic $T[1]$-cluster tilting object in $\D$, where $X$ is indecomposable. Then we have the following.
\begin{enumerate}[(1)]
\item If $X\in U\ast[T[1]]$, we take a triangle
\begin{equation}\label{equi:5}
Y \s{g}\longrightarrow U_1 \s{f}\longrightarrow X \s{h}\longrightarrow Y[1], \tag{$\star$}
\end{equation}
where $f$ is a minimal right $(\add U)$-approximation. In this case, $Y$ is another complement to $U$ and $U\oplus Y>M$.
\item If $X\notin U\ast[T[1]]$, we take a triangle
\begin{equation}\label{equi:6}
X \s{g}\longrightarrow U_2 \s{f}\longrightarrow Y \s{h}\longrightarrow X[1], \tag{$\star\star$}
\end{equation}
where $g$ is a minimal left $(\add U)$-approximation. In this case, $Y$ is another complement to $U$ and $U\oplus Y<M$.
\end{enumerate}
\end{thm}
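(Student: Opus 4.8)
The plan is to reduce both statements to the already-established machinery on the module side, via the bijection $\widetilde{(-)}$ of Theorem~\ref{thm:mainthm} and the poset isomorphism of Proposition~\ref{prop:twoposets}, and then transport the exchange-triangle construction back to $\D$. First I would handle case (1). Since $X\in U\ast[T[1]]$, Lemma~\ref{lem:suffnece} gives $\overline{X'}\in\Fac\overline{U'}$, so on the module side $\widetilde M=\widetilde U\oplus(\overline{X'},?)$ and, by Proposition~\ref{prop:criterion}, the mutation $\mu_{\overline{X'}}(\overline M')$ satisfies $\mu_{\overline{X'}}(\overline M')>\overline M'$ (the $X\in\Fac U$ branch). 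Apply $\overline{(-)}$ to the triangle $(\star)$: because $f$ is a right $(\add U)$-approximation and $\overline{(-)}\colon\D/[T[1]]\xrightarrow{\sim}\mod\la$ is an equivalence, $\overline f\colon\overline{U_1}\to\overline{X'}$ is a right $(\add\overline{U'})$-approximation, hence surjective (as $\overline{X'}\in\Fac\overline{U'}$). So $\overline{(\star)}$ yields an exact sequence $\overline Y\to\overline{U_1}\twoheadrightarrow\overline{X'}\to\overline{Y[1]}$; one then has to check $\overline{Y[1]}$ lies in the image of $\proj\la$ appropriately, i.e. identify $\widetilde Y$ with the $\tau$-tilting pair $\mu_{\overline{X'}}(\widetilde M)$ described by the exchange sequence of \cite[Section~2]{AIR}. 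The conclusion "$Y$ is a complement to $U$ and $U\oplus Y>M$" then follows from Theorem~\ref{thm:mainthm}(b) plus Proposition~\ref{prop:twoposets}.

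Alternatively, and perhaps more cleanly, I would argue directly in $\D$. In case (1), to see $U\oplus Y$ is $T[1]$-rigid it suffices (by Corollary~\ref{cor:maincor1}) to check $[T[1]](U,Y[1])=0$, $[T[1]](Y,U[1])=0$ and $[T[1]](Y,Y[1])=0$. Rotating $(\star)$ to $U_1\xrightarrow{f}X\xrightarrow{h}Y[1]\to U_1[1]$ and applying $[T[1]](U,-)$: a morphism $U\to Y[1]$ factoring through $\add T[1]$ lifts, after composing appropriately, using that $f$ is a right $(\add U)$-approximation together with $T[1]$-rigidity of $M=U\oplus X$, to the zero map — this is the same diagram chase as in the proof of Lemma~\ref{lem:sumT[1]-proj}. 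For $[T[1]](Y,-)$ one applies $[T[1]](-,U[1])$ and $[T[1]](-,Y[1])$ to $(\star)$ itself and uses the long exact sequences together with $[T[1]](U_1,U[1])=[T[1]](U_1,X[1])=0$ and $[T[1]](X,Y[1])=0$ (the last because $h\in[T[1]]$ would be needed, so one argues via the connecting maps). Having shown $U\oplus Y$ is $T[1]$-rigid with $|U\oplus Y|=|T|$ — here I must also verify $Y\notin\add U$, which follows since otherwise $f$ would split contradicting minimality, or since $U\oplus Y$ and $M$ would then fail to be distinct while both containing $U$ with $|U|=|T|-1$ — it is $T[1]$-cluster tilting. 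Finally $U\oplus Y>M$: by Lemma~\ref{lem:suffnece} this is $\Fac\overline{U'}\subseteq\Fac\overline{(U\oplus Y)'}$ versus the reverse, and since $\overline{X'}\in\Fac\overline{U'}$ one gets $M\ast[T[1]]\subseteq (U\oplus Y)\ast[T[1]]$, and they are incomparable-or-comparable by the $U$-mutation dichotomy already recorded, forcing $U\oplus Y>M$.

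Case (2) is formally dual. Now $X\notin U\ast[T[1]]$, so $\overline{X'}\notin\Fac\overline{U'}$; take the minimal left $(\add U)$-approximation $g\colon X\to U_2$, complete to the triangle $(\star\star)$. The same two strategies apply: either apply $\overline{(-)}$, note $\overline g$ is a left $(\add\overline{U'})$-approximation, read off the exact sequence $\overline{X'}\xrightarrow{\overline g}\overline{U_2}\to\overline Y\to\overline{X'[1]}$ and match it against the left-mutation exchange sequence of \cite{AIR} when $X\notin\Fac U$ (the other branch of Proposition~\ref{prop:criterion}); or run the diagram chases in $\D$ dual to those above to show $U\oplus Y$ is $T[1]$-rigid of the right size, hence $T[1]$-cluster tilting, with $U\oplus Y<M$ via Proposition~\ref{prop:twoposets}. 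One subtlety to watch in (2): unlike (1), here $Y$ may acquire a nonzero summand in $\add T[1]$ (corresponding to a projective being ejected on the module side), so the decomposition $Y=Y'\oplus Y''$ must be tracked, and "$Y$ is a complement" is understood in the sense that $U\oplus Y$ is the other $T[1]$-cluster tilting object completing $U$.

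\textbf{Main obstacle.} I expect the genuine work to be in case (2) and in the bookkeeping of the $\add T[1]$-summands: showing that the minimal left $(\add U)$-approximation of an $X$ with $\overline{X'}\notin\Fac\overline{U'}$ produces, after applying $\overline{(-)}$, exactly the short exact sequence governing $\mu_{\overline{X'}}(\widetilde M)$ — including correctly identifying which indecomposable projective (if any) is gained or lost — and that the resulting $U\oplus Y$ is precisely the second $T[1]$-cluster tilting object guaranteed by Corollary~\ref{cor:maincor1}(3) rather than merely \emph{some} $T[1]$-rigid object. Uniqueness of that second completion, combined with the $T[1]$-rigidity and cardinality checks, is what pins down $Y$; establishing the $T[1]$-rigidity of $U\oplus Y$ through the connecting-homomorphism arguments (where one cannot simply say "the map factors through $\add T[1]$" but must chase through the triangle) is the most delicate point.
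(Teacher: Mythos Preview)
Your Strategy B is essentially the paper's approach, but you have not isolated the lemma that actually drives all the diagram chases: the connecting map $h:X\to Y[1]$ in $(\star)$ factors through $\add T[1]$. This is Lemma~\ref{lem:facthr} in the paper, proved from the hypothesis $X\in U\ast[T[1]]$ via the octahedral axiom (compare the triangle witnessing $X\in U\ast[T[1]]$ with $(\star)$). Once you know $h\in[T[1]]$, every verification of $T[1]$-rigidity of $U\oplus Y$ becomes a clean two-step chase: a map in $[T[1]]$ composed with $h$ (or $h[-1]$) vanishes by $T[1]$-rigidity of $M$, so it factors through the triangle, and then the approximation property of $f$ kills it. Your sketch gestures at this (``$h\in[T[1]]$ would be needed'') but treats it as something to be extracted from long exact sequences rather than as a standalone fact requiring its own proof; without it, your chases for $[T[1]](Y,U[1])=0$ and $[T[1]](Y,Y[1])=0$ do not close.

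Two further gaps. First, you need $Y$ indecomposable (that is what ``complement'' means); ``otherwise $f$ would split'' only rules out $Y\in\add M$, not a decomposition $Y=Y_1\oplus Y_2$. The paper handles this (Lemma~\ref{lem:complement}) by first showing $g$ is a \emph{minimal left} $(\add M)$-approximation (Lemma~\ref{lem:minapproximation}, again using $h\in[T[1]]$), and then splitting the triangle along a putative decomposition of $Y$. Second, case (2) is not argued dually in the paper: it is deduced from case (1). Given $X\notin U\ast[T[1]]$, the \emph{other} complement $Y$ satisfies $Y\in U\ast[T[1]]$ by Lemma~\ref{lem:bigandsmall}, so case (1) applied to $Y$ produces the exchange triangle with a minimal right approximation of $Y$; Lemma~\ref{lem:minapproximation} then identifies the third term's map as a minimal left $(\add U)$-approximation, which matches $(\star\star)$. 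Your Strategy A (transporting to $\mod\la$ and quoting \cite{AIR}) could in principle work, but note that the paper's Theorem~\ref{thm:triangletosequence} establishes the comparison in the opposite direction and \emph{after} Theorem~\ref{thm:mutation} is already proved, precisely because matching the image of $(\star\star)$ with the AIR exchange sequence (minimality of $\overline g$, identification of $\overline Y$) is not automatic.
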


To prove this theorem, we need some preparations. First we need the following easy observation.
\begin{lem}\label{lem:facthr}
The map $h: X\rightarrow Y[1]$ in (\ref{equi:5}) factors through $\add T[1]$.
\end{lem}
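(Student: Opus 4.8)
The plan is to extract the relevant piece of the triangle $(\ref{equi:5})$ and feed it into the hypothesis that $X\in U\ast[T[1]]$. First I would write down the defining triangle for $X\in U\ast[T[1]]$: there is a triangle
\[
M_X\s{a}\longrightarrow X\s{\eta_X}\longrightarrow C_X\longrightarrow M_X[1],
\]
with $M_X\in\add U\subseteq\add M$ and $\eta_X$ factoring through $\add T[1]$. The key point is that $a\colon M_X\to X$ is a morphism from $\add U$ to $X$, so since $f\colon U_1\to X$ in $(\ref{equi:5})$ is a right $(\add U)$-approximation, $a$ factors through $f$, say $a=f a'$ for some $a'\colon M_X\to U_1$.

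Next I would exploit that $f$ and $a$ have "the same cokernel" in the triangulated sense. Applying the octahedral axiom (or a direct diagram chase) to the factorization $a=fa'$ together with the triangles $Y\s{g}\to U_1\s{f}\to X\s{h}\to Y[1]$ and $M_X\s{a}\to X\s{\eta_X}\to C_X\to M_X[1]$, one obtains a morphism $h'\colon C_X\to Y[1]$ (coming from the universal property of the cone of $a$) with $h = h'\circ\eta_X$. Since $\eta_X$ factors through $\add T[1]$ by hypothesis, $h=h'\eta_X$ also factors through $\add T[1]$, which is exactly the claim. The main obstacle I anticipate is setting up the octahedral diagram cleanly so that the comparison morphism $h'$ is produced and the identity $h=h'\eta_X$ is verified; this is a routine but slightly fiddly diagram chase, and care is needed because $f$ is only a right approximation (not split), so one cannot simply invert anything — the argument has to go entirely through the factorization $a=fa'$ and the triangulated functoriality.

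Alternatively, and perhaps more transparently, I would argue homologically: apply $\overline{(-)}={\rm Hom}_\D(T,-)$ to $(\ref{equi:5})$. By Lemma \ref{lem:suffnece} and its proof, $X\in U\ast[T[1]]$ forces $\overline{X'}\in\Fac\overline{U'}$, and since $f$ is a right $(\add U)$-approximation the induced map $\overline{f}\colon\overline{U_1}\to\overline{X'}$ is surjective (any surjection onto $\overline{X'}$ from $\add\overline{U'}$ factors through $\overline{f}$, and such a surjection exists). Hence in the exact sequence $\overline{U_1}\s{\overline{f}}\to\overline{X'}\s{\overline{h}}\to\overline{Y[1]}$ we get $\overline{h}=0$, i.e. $h$ lies in the kernel of ${\rm Hom}_\D(T,X)\to{\rm Hom}_\D(T,Y[1])$. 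To conclude that $h$ factors through $\add T[1]$ one uses Theorem \ref{FT}: take a triangle $T_0\to T_1[1]\to Y[1]\to T_0[1]$ with $T_0,T_1\in\add T$ realizing $Y[1]\in T\ast T[1]$, shift it appropriately, and since $\overline{h}=0$ the composite $X\to Y[1]\to T_0[1]$ (the connecting piece) vanishes after applying ${\rm Hom}_\D(T,-)$, forcing $h$ to factor through the $\add T[1]$-term. I expect the homological route to be the shorter one to write out, so I would present that, keeping the octahedral argument as a remark if space permits.
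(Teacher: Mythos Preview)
Your first approach---factor $a$ through the right approximation $f$ and apply the octahedral axiom to obtain $h=h'\eta_X$---is precisely the paper's proof. The paper writes out the octahedral diagram explicitly for the factorization $c_X=fi$ and reads off that $h$ factors through $\eta_X$, hence through $\add T[1]$.

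Your alternative homological route is also correct, but the last step via Theorem \ref{FT} is superfluous: once you have $\overline{h}=0$ as a morphism in $\mod\la$, the equivalence $\D/[T[1]]\simeq\mod\la$ of Theorem \ref{cto} is in particular faithful, so $h=0$ in $\D/[T[1]]$, which is exactly the assertion $h\in[T[1]](X,Y[1])$. This route trades the octahedral diagram for the already-established faithfulness of the projection functor; the paper's argument stays entirely inside $\D$ at the level of triangles, while yours passes through the module category. Both are short; the paper's choice has the mild advantage of not relying on the equivalence (and hence being reusable in contexts where only the triangulated data is at hand).
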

\begin{proof}
Since $X\in U\ast[T[1]]$, we have a triangle $U_X \s{c_X}\longrightarrow X \s{\eta_X}\longrightarrow C_X \longrightarrow U_X[1],$
 where $U_X\in \add U$ and the morphism $\eta_X$ factors through $\add T[1]$. Because the map $f$ in (\ref{equi:5}) is a right $(\add U)$-approximation, there exists $i: U_X\rightarrow U_1$ such that $c_X=fi$. By the octahedral axiom, we have a commutative diagram
$$\xymatrix@!@C=0.6cm@R=0.6cm{
               &U_X\ar@{.>}[d]_{\exists i}\ar@{=}[r]  & U_X\ar[d]^{c_X} \\
Y\ar@{=}[d]\ar[r]^g & U_1\ar[d]\ar[r]^f    & X\ar[r]^h\ar[d]^{\eta_X} & Y[1]\ar@{=}[d]  \\
Y\ar[r]        & X' \ar[d]\ar[r]     & C_X\ar[d]\ar[r]          & Y[1]  \\
               & U_X[1]\ar@{=}[r]      & U_X[1]
  }
$$
 of triangles. Thus $h$ factors through $\eta_X$, which implies that $h$ factors through $\add T[1]$.
\end{proof}
The following lemma plays an important role in the proof of Theorem \ref{thm:mutation}.
\begin{lem}\label{lem:minapproximation}
The map $g: Y\rightarrow U_1$ in (\ref{equi:5}) is a minimal left $(\add M)$-approximation. In particular, $g$ is a minimal left $(\add U)$-approximation.
\end{lem}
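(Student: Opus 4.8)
The claim has two parts: first that $g\colon Y\to U_1$ in the triangle $(\star)$ is a left $(\add M)$-approximation, and then that it is minimal; the ``in particular'' is automatic since $U$ is a direct summand of $M$. I would isolate the approximation property as the genuine content. Recall $(\star)$ is $Y\xrightarrow{g}U_1\xrightarrow{f}X\xrightarrow{h}Y[1]$ with $f$ a (minimal) right $(\add U)$-approximation of $X$ and $M=U\oplus X$ basic $T[1]$-cluster tilting.

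\textbf{Step 1: $g$ is a left $(\add U)$-approximation.} Let $a\colon Y\to U'$ be any morphism with $U'\in\add U$. Apply $\Hom_{\D}(-,U')$ to $(\star)$: I need to show $a$ factors through $g$, i.e.\ that the connecting map $\Hom_{\D}(X,U')\xrightarrow{\cdot f}\Hom_{\D}(U_1,U')\to\Hom_{\D}(Y,U')\xrightarrow{\cdot h[-1]}\Hom_{\D}(X[-1],U')$ has $a$ in the image of $\cdot f$, which by exactness is the same as $a\circ(h[-1])=0$, i.e.\ $h[-1]$ is killed by $\Hom_{\D}(-,U')$-postcomposition. By Lemma \ref{lem:facthr}, $h\colon X\to Y[1]$ factors through $\add T[1]$, hence $h[-1]\colon X[-1]\to Y$ factors through $\add T$. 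So $a\circ h[-1]\in[T](X[-1],U')$. But $M=U\oplus X$ is $T[1]$-rigid, so in particular $[T[1]](U,U'[1])=0$ (taking $U'\in\add U$); I now want to upgrade this to $[T](X[-1],U')=0$ or at least to the vanishing of $a\circ h[-1]$. Here is the key point: the image of $\cdot(h[-1])\colon\Hom_{\D}(Y,U')\to\Hom_{\D}(X[-1],U')$ is contained in the morphisms factoring through $\add T$, and composing the triangle $(\star)$ shifted once more, any such composite lands in $[T[1]](X,U'[1])=0$ after the natural identification — more directly, the obstruction $a\circ h[-1]$ factors through $\add T$ and also equals (up to the triangle rotation) a map $X\to U'[1]$ precomposed appropriately; since $[T[1]](M,M[1])=0$ gives $[T[1]](X,U'[1])=0$, and one checks the relevant composite lies in that group, it vanishes. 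Thus $a$ factors through $g$.

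\textbf{Step 2: $g$ is a left $(\add M)$-approximation.} It remains to handle targets $X'\in\add X$. Given $a\colon Y\to X'$, I would argue that $a\circ h[-1]\colon X[-1]\to X'$ factors through $\add T$ (again by Lemma \ref{lem:facthr}), hence lies in $[T](X[-1],X')$; rotating, this corresponds to an element of $[T[1]](X,X'[1])=0$ since $M$ is $T[1]$-rigid and $X,X'\in\add M$. Exactness of $\Hom_{\D}(-,X')$ applied to $(\star)$ then yields $a=b\circ g$ for some $b\colon U_1\to X'$. Combined with Step 1, $g$ is a left $(\add M)$-approximation.

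\textbf{Step 3: minimality.} Suppose $\varphi\colon U_1\to U_1$ satisfies $\varphi g=g$. I want $\varphi$ an isomorphism. From $\varphi g = g$ and the triangle $(\star)$, the difference $\mathrm{id}-\varphi$ kills $g$, so $\mathrm{id}_{U_1}-\varphi$ factors through $f\colon U_1\to X$, say $\mathrm{id}-\varphi = s f$ with $s\colon X\to U_1$. Since $f$ is a \emph{minimal} right $(\add U)$-approximation, I can feed this back: $f(\mathrm{id}-\varphi) = f s f$, i.e.\ $f - f\varphi = (fs)f$; if $fs$ were such that $f - f\varphi$ fails to be ``small'' I get a contradiction with minimality of $f$ unless $\varphi$ is invertible — concretely, minimality of $f$ means any $\psi\colon X\to X$ with $\psi f = f$ is an isomorphism, and one extracts such a $\psi$ (namely related to $\mathrm{id}-fs$) to conclude. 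Alternatively, and more cleanly, I would invoke the standard fact (e.g.\ from the theory of approximations in Krull--Schmidt categories) that in a triangle $Y\xrightarrow{g}U_1\xrightarrow{f}X$ with $f$ a minimal right approximation with respect to a subcategory, $g$ is automatically a minimal left approximation; this is essentially the observation that the two minimality conditions are exchanged by the triangle, using that $\End_{\D}(U_1)$ is semiperfect.

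\textbf{Main obstacle.} The delicate step is Step 1 (equivalently the heart of Step 2): carefully tracking that the obstruction $a\circ h[-1]$, which a priori only lies in $[T](X[-1],U')$, actually vanishes because of the $T[1]$-rigidity hypothesis on $M$. This requires correctly matching the relevant $\Hom$-group after rotating the triangle and verifying the composite genuinely lands in $[T[1]](X, U'[1])$ (not merely in $[T](-,-)$, which need not vanish). Once this identification is set up — presumably mirroring the bookkeeping already done in Lemma \ref{lem:sumT[1]-proj} and Lemma \ref{Palu} — the rest is formal. The minimality in Step 3 is routine Krull--Schmidt bookkeeping and I would keep it brief.
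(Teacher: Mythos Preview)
Your Steps 1--2 are essentially the paper's argument, and you are making the ``main obstacle'' harder than it is. The identification $[T](X[-1],M_0)=[T[1]](X,M_0[1])$ is immediate: applying the shift autoequivalence $[1]$ carries a morphism $X[-1]\to M_0$ factoring through $\add T$ to a morphism $X\to M_0[1]$ factoring through $\add T[1]$, and conversely. So once Lemma~\ref{lem:facthr} gives $a\circ h[-1]\in [T](X[-1],M_0)$, this group \emph{is} $[T[1]](X,M_0[1])$, which vanishes because $X,M_0\in\add M$ and $M$ is $T[1]$-rigid. No rotation, no appeal to Lemma~\ref{lem:sumT[1]-proj} or Lemma~\ref{Palu}, and no case split between targets in $\add U$ and in $\add X$ is needed; the paper handles an arbitrary $M_0\in\add M$ in one line.

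Your Step~3, however, has a genuine gap. The ``standard fact'' you invoke is false: take any $X\in\add U$, say $X=U_1$ with $f=\mathrm{id}_X$, which is a minimal right $(\add U)$-approximation; then $Y=0$ and $g=0\colon 0\to X$, which is not left minimal unless every endomorphism of $X$ is invertible. Your first attempt, via $\mathrm{id}_{U_1}-\varphi=sf$, can be salvaged but you do not close it: the missing ingredient is that $X\notin\add U$ (since $M=X\oplus U$ is basic), so the composite $sf\colon U_1\to X\to U_1$ lies in the Jacobson radical of $\End_{\D}(U_1)$, whence $\varphi=\mathrm{id}-sf$ is invertible. The paper argues instead by decomposition: if $g$ were not left minimal, write $g=\binom{g_0}{0}\colon Y\to U_{11}\oplus U_{12}$ with $U_{12}\neq 0$; the triangle then splits as a direct sum, forcing $X\simeq Y'\oplus U_{12}$, and indecomposability of $X$ gives $X\simeq U_{12}\in\add U$, a contradiction. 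Either route works, but both hinge on the specific inputs that $X$ is indecomposable and $X\notin\add U$; minimality of $f$ alone does not suffice.
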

\begin{proof}
Take any map $a:Y\rightarrow M_0$, where $M_0\in \add M$. By Lemma \ref{lem:facthr}, we may assume that there are two morphisms $h_1:X[-1]\rightarrow T_0$ and $h_2:T_0\rightarrow Y$ such that $h[-1]=h_2h_1$, where $T_0\in \add T$.
$$\xymatrix@!@C=0.6cm@R=0.6cm{
X[-1]\ar[r]^{h[-1]}\ar[d]_{h_1}  &  Y\ar[rd]_a\ar[r]^g & U_1\ar[r]^f\ar@{.>}[d]    & X\ar[r]^h & Y[1]   \\
T_0\ar[ur]_{h_2}                 &&  M_0     }
$$
Noticing that $$ah[-1]=(ah_2)h_1\in [T](X[-1], M_0)$$ and $M=U\oplus X$ is $T[1]$-rigid, we get $ah[-1]=0$. This implies that $a$ factors through $g$, and hence $g$ is a left $(\add M)$-approximation.
\medskip

Now we show that $g$ is a left minimal map. If this were not true, then there would be a decomposition $U_1=U_{11}\oplus U_{12}$ such that $$g=\binom{g_0}{0}: Y\longrightarrow U_{11}\oplus U_{12}.$$
Consider the following triangles
$$Y \s{g_0}\longrightarrow U_{11} \longrightarrow Y' \s{h}\longrightarrow Y[1],$$
$$0       \longrightarrow U_{12}  \longrightarrow U_{12} \longrightarrow 0.$$
Thus we get a triangle
\begin{equation}\label{equi:6}
Y \s{g=\binom{g_0}{0}}\longrightarrow U_{11}\oplus U_{12} \longrightarrow Y'\oplus U_{12} \longrightarrow Y[1],
\end{equation}
Comparing the triangle (\ref{equi:5}), we obtain that $X\simeq Y'\oplus U_{12}$. Since $X$ is indecomposable, we would have $X\simeq U_{12}\in \add U$. This is a contradiction and our claim follows.
\end{proof}
The following results are also crucial.
\begin{lem}\label{lem:complement}
The object $Y$ in (\ref{equi:5}) is indecomposable and it is not in $\add M$.
\end{lem}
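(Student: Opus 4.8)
The plan is to prove the two assertions separately: first that $Y \notin \add M$, and then that $Y$ is indecomposable. For the first assertion, suppose for contradiction that $Y \in \add M = \add(U \oplus X)$. I would split into two cases according to whether the indecomposable summand $X$ does or does not occur in $Y$. If $Y \in \add U$, then apply $\Hom_\D(-, Y)$ (or rather use that $f$ is a right $(\add U)$-approximation and $g$ is a minimal left $(\add U)$-approximation by Lemma \ref{lem:minapproximation}): since $\id_Y : Y \to Y$ and $Y \in \add U$, the map $g$ admits a retraction, so $g$ is a split monomorphism, forcing the triangle (\ref{equi:5}) to split and $X \cong U_1 \in \add U$, contradicting $X \notin \add U$ (which holds because $M = U \oplus X$ is basic). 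If $X$ is a summand of $Y$, write $Y \cong X \oplus Y_0$; then $X$ is a direct summand of $Y$, and since $g : Y \to U_1$ is a left $(\add M)$-approximation with $X \in \add M$, the composite $X \hookrightarrow Y \xrightarrow{g} U_1$ would have to be, together with the rest, controllable — more efficiently, I would instead invoke the $T[1]$-rigidity count: if $Y \in \add M$ then $U \oplus Y \in \add M$, and the triangle (\ref{equi:5}) together with Lemma \ref{lem:facthr} shows $h$ factors through $\add T[1]$, so $X \in Y \ast [T[1]] \subseteq M \ast [T[1]]$, hence by Lemma \ref{lem:suffnece} $\overline{X'} \in \Fac \overline{M'}$ — but this is consistent and not yet a contradiction, so the cleaner route is the splitting argument: $g$ being a minimal left $(\add M)$-approximation and $Y \in \add M$ forces $g$ to be a split monomorphism (the identity of $Y$ factors through $g$), hence (\ref{equi:5}) splits, hence $X \in \add U_1 \subseteq \add U$, the desired contradiction.

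For the indecomposability of $Y$, I would argue as follows. By Lemma \ref{lem:facthr} the connecting map $h : X \to Y[1]$ in (\ref{equi:5}) factors through $\add T[1]$, and by Lemma \ref{lem:minapproximation} the map $g : Y \to U_1$ is a minimal left $(\add U)$-approximation. First I claim $U \oplus Y$ is $T[1]$-rigid: since $M = U \oplus X$ is $T[1]$-rigid and $(\ref{equi:5})$ expresses $Y$ via $U_1 \in \add U$ and $X$, applying $[T[1]](-, (U\oplus Y)[1])$ and $[T[1]](U \oplus Y, -[1])$ to the triangle and using that $h$ and $h[1]$ factor through $\add T[1]$ (so that the relevant connecting maps are killed in the quotient by $[T[1]]$), one deduces the vanishing of $[T[1]](U \oplus Y, (U\oplus Y)[1])$ from that of $[T[1]](M, M[1])$. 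Then Corollary \ref{cor:maincor1}(1) gives $|U \oplus Y| \le |T| = |M| = |U| + 1$, so $|Y| \le 1$, i.e. $Y$ is indecomposable (it is nonzero since otherwise $X \in \add U_1 \subseteq \add U$, contradicting $M$ basic).

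The step I expect to be the main obstacle is the verification that $U \oplus Y$ is $T[1]$-rigid, specifically checking the four Hom-vanishing conditions $[T[1]](U, Y[1]) = [T[1]](Y, U[1]) = [T[1]](Y, Y[1]) = 0$ (the condition $[T[1]](U,U[1])=0$ being immediate). Each of these requires chasing a long exact sequence obtained by applying $\Hom_\D(-, ?[1])$ or $\Hom_\D(?, -[1])$ to the triangle (\ref{equi:5}), tracking which morphisms land in the ideal $[T[1]]$ using Lemma \ref{lem:facthr} and the approximation properties from Lemma \ref{lem:minapproximation}, and using the $T[1]$-rigidity of $M$ at the right spots; I anticipate this is where the real work lies, though it is structurally parallel to the argument already carried out in the proof of Lemma \ref{lem:sumT[1]-proj}.
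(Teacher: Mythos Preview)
Your argument for $Y \notin \add M$ is fine and matches the paper's: since $g$ is a minimal left $(\add M)$-approximation by Lemma~\ref{lem:minapproximation}, if $Y\in\add M$ then $\id_Y$ factors through $g$, so $g$ is a split monomorphism, the triangle splits, and $X$ becomes a summand of $U_1\in\add U$, contradicting that $M=U\oplus X$ is basic.

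The indecomposability argument, however, has a genuine gap. Even granting that $U\oplus Y$ is $T[1]$-rigid (which the paper does prove, though \emph{after} this lemma, in the proof of Theorem~\ref{thm:mutation}), the deduction $|U\oplus Y|\le |U|+1\ \Rightarrow\ |Y|\le 1$ is invalid. Recall that $|{-}|$ counts isomorphism classes of indecomposable summands; hence $|U\oplus Y|\le|U|+1$ only says that $Y$ contributes at most one \emph{new} isomorphism class beyond those of $U$. Nothing rules out summands of $Y$ that already lie in $\add U$, and nothing rules out multiplicity: an object of the form $Y\cong U_0\oplus Z^m$ with $U_0\in\add U$ and $Z$ indecomposable still satisfies $|U\oplus Y|=|U|+1$. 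Even the weaker conclusion $|Y|=1$ would only give $Y\cong Z^m$, not indecomposability. So the counting route, as written, does not close.

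The paper's proof is shorter and bypasses rigidity entirely. Suppose $Y=Y_1\oplus Y_2$ with both summands nonzero; take minimal left $(\add U)$-approximations $g_i:Y_i\to U_{1,i}$ and complete to triangles with third terms $X_i$. By Lemma~\ref{lem:minapproximation} (uniqueness of minimal approximations), the direct sum of these two triangles is isomorphic to $(\star)$, so $X\cong X_1\oplus X_2$. Indecomposability of $X$ forces one $X_i=0$, whence the corresponding $g_i$ is an isomorphism and $f$ has the form $(0,f'):U_{1,1}\oplus U_{1,2}\to X$, contradicting right minimality of $f$. This argument uses only the minimality properties already in hand and gives indecomposability directly.
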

\begin{proof}
Suppose that there is a decomposition $Y=Y_1\oplus Y_2$ with $Y_1$ and $Y_2$ nonzero. Since $\add U$ is functorially finite in $\D$, we can get two triangles
$$Y_1 \s{g_1}\longrightarrow U_{13}\s{f_1} \longrightarrow X_{1} \longrightarrow Y_1[1] \mbox{ and } Y_2 \s{g_2}\longrightarrow U_{14}\s{f_2} \longrightarrow X_{2} \longrightarrow Y_2[1],$$
where $g_1$ and $g_2$ are two minimal left $(\add U)$-approximation. Thus by Lemma \ref{lem:minapproximation} the direct sum of these two triangles is $Y \s{g}\longrightarrow U_1 \s{f}\longrightarrow X \s{h}\longrightarrow Y[1]$, which implies that $X=X_1\oplus X_2$. Since $X$ is indecomposable, we may assume that $X_1=0$ and $X_2=X$. Thus $U_{13}\simeq Y_1$ and $$f=(0, f_2): U_{13}\oplus U_{14}\longrightarrow X.$$ This is a contradiction because $f$ is a right minimal map. Therefore $Y$ is indecomposable.
\medskip

Now we show that $Y$ is not in $\add M$. If this were not true, we would have $U_1\simeq Y$ and $X=0$ by Lemma \ref{lem:minapproximation}. This is a contradiction and our claim follows.
\end{proof}

\begin{lem}\label{lem:bigandsmall}
Assume that $X$ and $Z$ are two non-isomorphic complements to an almost $T[1]$-cluster tilting object $U$ in $\D$. Then $U\oplus X>U\oplus Z$ if and only if $X\notin U\ast [T[1]]$.
\end{lem}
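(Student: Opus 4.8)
The plan is to recognize this lemma as a translation of the module-theoretic criterion of Proposition \ref{prop:criterion}, carried across the order isomorphism of Proposition \ref{prop:twoposets} and the description of $U\ast[T[1]]$ provided by Lemma \ref{lem:suffnece}. Write $M=U\oplus X$ and $N=U\oplus Z$; by Corollary \ref{cor:maincor1}(3) these are the two basic $T[1]$-cluster tilting objects completing $U$, and by Proposition \ref{prop:twoposets} one has $M>N$ exactly when $\Fac\overline{M'}\supsetneq\Fac\overline{N'}$ in $\mod\la$. Everything therefore reduces to reading off this strict containment from whether $X\in U\ast[T[1]]$, and I would split into two cases according to whether or not $X\in\add T[1]$.

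First I would dispose of the case $X\in\add T[1]$. Then $X'=0$, so $\overline{X'}=0\in\Fac\overline{U'}$ and Lemma \ref{lem:suffnece} gives $X\in U\ast[T[1]]$; thus the right-hand side of the claimed equivalence is false, and it remains to check $M\not>N$. Now $\overline{M'}=\overline{U'}$, while $Z\notin\add T[1]$ (otherwise $\widetilde M$ and $\widetilde N$ would have the same module part $\overline{U'}$, and Proposition \ref{uniqueness}(b) would force $\widetilde M\simeq\widetilde N$, hence $M\simeq N$), so $\overline{U'}$ is a direct summand of $\overline{N'}$ and $\Fac\overline{M'}=\Fac\overline{U'}\subseteq\Fac\overline{N'}$. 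Hence $M\not>N$, and both sides of the equivalence are false.

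In the main case $X\notin\add T[1]$ we have $X'=X$ and $\overline{M'}=\overline{U'}\oplus\overline X$ with $\overline X$ indecomposable and (since $\widetilde M$ is basic) not a summand of $\overline{U'}$. The pairs $\widetilde M$ and $\widetilde N$ are support $\tau$-tilting pairs for $\la$ that are mutations of each other, and $\overline X$ is the extra indecomposable module summand of $\overline{M'}$, so $\overline{N'}=\mu_{\overline X}(\overline{M'})$ in the sense of Proposition \ref{prop:criterion}. Applying that proposition with $T=\overline{M'}=\overline X\oplus\overline{U'}$ and $T'=\overline{N'}$ gives $\Fac\overline{M'}\supsetneq\Fac\overline{N'}$ if and only if $\overline X\notin\Fac\overline{U'}$, and by Lemma \ref{lem:suffnece} the latter is equivalent to $X\notin U\ast[T[1]]$. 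Combining this with the reduction of the first paragraph finishes the proof.

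The one point that needs care is identifying the mutation of the pairs $\widetilde M,\widetilde N$ with the module mutation $\mu_X$ of Proposition \ref{prop:criterion}: that proposition requires the distinguishing indecomposable summand to be a genuine $\la$-module, so its application in the main case relies precisely on the hypothesis $X\notin\add T[1]$, and one must keep straight which of $\widetilde M,\widetilde N$ plays the role of $T$ and which of $T'$. I do not expect any obstacle beyond this bookkeeping; the rest is a routine chase through the dictionary already established in this section.
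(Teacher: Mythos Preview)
Your proof is correct and follows essentially the same route as the paper's: translate the order via Proposition~\ref{prop:twoposets}, invoke Proposition~\ref{prop:criterion} on the module side, and translate the condition on $X$ back via Lemma~\ref{lem:suffnece}. The paper compresses this into three lines and writes the middle step as ``$\overline{U\oplus X}>\overline{U\oplus Z}$ is equivalent to $\overline{X'}\notin\Fac\overline{U'}$ by Proposition~\ref{prop:criterion}'', without separating out the case $X\in\add T[1]$; your explicit treatment of that edge case (where Proposition~\ref{prop:criterion} does not literally apply because the distinguished summand lies in the projective part of the pair) is a genuine clarification rather than a different argument.
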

\begin{proof}
By Proposition \ref{prop:twoposets}, we know that $U\oplus X>U\oplus Z$ if and only if $\overline{U\oplus X}>\overline{U\oplus Z}$. This is equivalent to $\overline{X'}\notin \Fac\overline{U'}$ by Proposition \ref{prop:criterion}. Thanks to Lemma \ref{lem:suffnece}, this is equivalent to $X\notin U\ast [T[1]]$.
\end{proof}

Now we are ready to prove Theorem \ref{thm:mutation}.
\medskip

(1) We first show that $Y$ is another complement to $U$. Using Lemma \ref{lem:complement}, we only need to show that $U\oplus Y$ is $T[1]$-rigid.
Take any map $a\in [T[1]](U, Y[1])$. Since $a$ factors through $\add T[1]$ and $U$ is $T[1]$-rigid, we have $g[1]a=0$.
$$\xymatrix@!@C=0.6cm@R=0.6cm{
 Y\ar[r]^g & U_1\ar[r]^f    & X\ar[r]^h & Y[1]\ar[r]^{g[1]}  &  U_1[1]   \\
   &&&U\ar[u]_{a}\ar@{.>}[ul]_{\exists a_1}\ar@{.>}[ull]^{\exists a_2}              }
$$
Thus there exists $a_1: U\rightarrow X$ such that $a=ha_1$. Observing that $f$ is a right $(\add U)$-approximation, we know there exists $a_2: U\rightarrow U_1$ such that $a_1=fa_2$. Thus $a=ha_1=(hf)a_2=0$ and hence $$[T[1]](U, Y[1])=0.$$ For any morphism $b\in [T[1]](Y, U[1])$, we know that there are two morphisms $b_1: Y\rightarrow T_1[1]$ and $b_2:T_1[1]\rightarrow U[1]$ such that $b=b_2b_1$, where $T_1\in \add T$.
\begin{center}
\begin{tabular}{cc}
$\xymatrix@!@C=0.6cm@R=0.6cm{
X[-1]\ar[r]^{h[-1]} & Y\ar[r]^g\ar[d]^{b_1} & U_1\ar[r]^f\ar@{.>}[ld]^{\exists b_3}    & X  \\
   & T_1[1]\ar[d]^{b_2}\\
   & U[1]             }
$
& \quad
$\xymatrix@!@C=0.6cm@R=0.6cm{
X[-1]\ar[r]^{h[-1]} & Y\ar[r]^g\ar[d]^{c_1} & U_1\ar[r]^f\ar@{.>}[ld]^{\exists c_3}    & X  \\
   & T_2[1]\ar[d]^{c_2}\\
   & Y[1]             }$
   \\
\text{Figure 3}  &   \text{Figure 4}
\end{tabular}
\end{center}
By Lemma \ref{lem:facthr}, we know $h[-1]$ factors through $\add T$, which implies that $b_1h[-1]=0$. Thus there exists $b_3: U_1\rightarrow T_1[1]$ such that $b_1=b_3g$. Because $U$ is $T[1]$-rigid, we have $b=b_2b_1=(b_2b_3)g=0$. Hence $$[T[1]](Y, U[1])=0.$$
 It remains to show that $Y$ is $T[1]$-rigid. In the similar way (see Figure 4), we know that, for any map $c\in [T[1]](Y, Y[1]), c=(c_2c_3)g$. Since $c_2c_3\in T[1]](U, Y[1])=0$, we have $c=0$ and $[T[1]](Y, Y[1])=0$. Therefore, $Y$ is another complement to $U$.
 \medskip

 In order to show that $U\oplus Y>M$, by Lemma \ref{lem:bigandsmall}, we need to show that $Y\notin U\ast[T[1]]$.  If this were not true, by Lemma \ref{lem:suffnece}, we would have $\overline{Y'}\in\Fac\overline{U'}$. Since $X\in U\ast[T[1]]$, similarly we have $\overline{X'}\in\Fac\overline{U'}$. This contradicts with Proposition \ref{prop:criterion} and our claim follows.
\medskip

(2) Let $Y$ be another complement to $U$. Since $X\notin U\ast[T[1]]$, we have that $U\oplus Y<M$ and $Y\in U\ast[T[1]]$ By Lemma \ref{lem:bigandsmall}. Using (1) and Lemma \ref{lem:minapproximation},
\newenvironment{prf}{\noindent {\bf  }}{\hfill $\Box$}
\begin{prf}we know the assertion follows immediately.
\end{prf}
\medskip

\subsection{An application}
We end this section with an application of mutation. We first recall the following definition.
\begin{defn}\cite{AIR}
Let $\la$ be a finite dimensional $k$-algebra and $U$ be a basic $\tau$-rigid $\la$-module. We call $P(^{\bot}(\tau U))$ the {\rm Bongartz completion} of $U$.
\end{defn}
Note that $U\in P(^{\bot}(\tau U))$ and thus any $\tau$-rigid $\la$-module is a direct summand of some $\tau$-tilting $\la$-module. The following observation is needed in this subsection.
\begin{prop}\label{prop:Boncriterion}
\cite[Definition-Proposition 2.26]{AIR} Let $\la$ be a finite dimensional $k$-algebra and let $T= X\oplus U$ and $T'$ be support $\tau$-tilting $\Lambda$-modules such that $T'=\mu_X(T)$ for some indecomposable $\Lambda$-module $X$. If $T$ is a Bongartz completion of $U$, then $T>T'$ and $X\notin \Fac U$.
\end{prop}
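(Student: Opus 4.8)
The statement to prove is: if $T = X \oplus U$ and $T' = \mu_X(T)$ are support $\tau$-tilting $\la$-modules and $T$ is the Bongartz completion of $U$, then $T > T'$ and $X \notin \Fac U$. Since the excerpt attributes this to \cite[Definition-Proposition 2.26]{AIR}, the intended ``proof'' is really a short deduction from the results already recorded, principally Proposition~\ref{prop:criterion} and Theorem~\ref{thm:mutationsupport}. By Proposition~\ref{prop:criterion}, applied to the mutation $T' = \mu_X(T)$ with $X$ indecomposable, we know that exactly one of $T > T'$ or $T' > T$ holds, and moreover $T > T'$ if and only if $X \notin \Fac U$. So the two conclusions $T > T'$ and $X \notin \Fac U$ are equivalent, and it suffices to establish either one of them; I would aim to show $X \notin \Fac U$, or equivalently $T > T'$.

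The plan is to exploit the Bongartz completion hypothesis. By definition $T = P({}^{\perp}(\tau U))$, so $\Fac T = {}^{\perp}(\tau U)$ as a torsion class (using Theorem~\ref{thm:f-tors}, the bijection between support $\tau$-tilting modules and functorially finite torsion classes, under which $T \mapsto \Fac T$ and the torsion class determines its Ext-projective generator). On the other hand, by Theorem~\ref{thm:mutationsupport}, the two support $\tau$-tilting modules containing the almost complete $U$ have torsion classes exactly $\Fac U$ and ${}^{\perp}(\tau U) \cap Q^{\perp}$, where $Q$ is the projective part of the almost support $\tau$-tilting pair $(U,Q)$ attached to this mutation. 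The key inclusion is the general fact $\Fac U \subseteq {}^{\perp}(\tau U)$ (every $\tau$-rigid module lies in the perpendicular torsion class it generates); combining with $Q^{\perp}$, one gets $\Fac U \subseteq {}^{\perp}(\tau U)\cap Q^{\perp} \subseteq {}^{\perp}(\tau U) = \Fac T$. Since $T$ and $T'$ are the two modules with torsion classes among $\{\Fac U,\ {}^{\perp}(\tau U)\cap Q^{\perp}\}$, and $\Fac T = {}^{\perp}(\tau U)$ is the larger of the two options (indeed it strictly contains $\Fac U$, as $T \neq U$ forces the torsion classes to differ), the module $T$ must be the one with the larger torsion class; hence $\Fac T \supsetneq \Fac T'$, i.e. $T > T'$. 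By Proposition~\ref{prop:criterion} this gives $X \notin \Fac U$.

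The one point that needs a little care — and the only place where a genuine argument rather than bookkeeping is required — is identifying which of the two modules produced by Theorem~\ref{thm:mutationsupport} is $T$ and which is $T'$. The Bongartz completion $T = P({}^{\perp}(\tau U))$ is by construction the Ext-projective generator of the \emph{maximal} torsion class containing $U$ as a module but not exceeding ${}^{\perp}(\tau U)$; so among the two candidate torsion classes $\Fac U$ and ${}^{\perp}(\tau U)\cap Q^{\perp}$, the one equal to $\Fac T$ is forced to be ${}^{\perp}(\tau U)\cap Q^{\perp}$, because $\Fac T \not\subseteq \Fac U$ (if it were, then $\Fac T = \Fac U$ by the inclusion above, contradicting $T\neq U$ since distinct basic support $\tau$-tilting modules have distinct torsion classes). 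I would then conclude $\Fac T' = \Fac U \subsetneq \Fac T$, so $T > T'$, and invoke Proposition~\ref{prop:criterion} once more to read off $X \notin \Fac U$. The main obstacle is purely this matching step; once the torsion classes are correctly identified, everything else is a direct citation of the recorded theorems.
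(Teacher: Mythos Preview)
The paper does not give its own proof of this proposition: it is stated purely as a citation of \cite[Definition--Proposition~2.26]{AIR}, the same source already quoted for Proposition~\ref{prop:criterion}. So there is nothing to compare against; your task was really to supply an argument where the paper simply imports one.

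Your derivation is correct. Two small simplifications are worth noting. First, since $T$ is the Bongartz completion $P({}^{\perp}(\tau U))$, it is $\tau$-tilting, so the associated pair is $(T,0)$ and the almost complete pair is $(U,0)$; thus $Q=0$ and $Q^{\perp}=\mod\la$, and Theorem~\ref{thm:mutationsupport} reads $\{\Fac T,\Fac T'\}=\{\Fac U,\ {}^{\perp}(\tau U)\}$. Second, the ``matching'' step you flag as delicate is actually immediate: by Theorem~\ref{thm:f-tors} the maps $M\mapsto\Fac M$ and $\T\mapsto P(\T)$ are inverse bijections, so $T=P({}^{\perp}(\tau U))$ forces $\Fac T={}^{\perp}(\tau U)$ directly, whence $\Fac T'=\Fac U$. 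Since $T\neq T'$ gives $\Fac T\neq\Fac T'$, you get $\Fac U\subsetneq{}^{\perp}(\tau U)=\Fac T$, i.e.\ $T>T'$, and Proposition~\ref{prop:criterion} then yields $X\notin\Fac U$. No separate argument excluding $\Fac T\subseteq\Fac U$ is needed.
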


 In \cite{AIR}, the authors gave the following result to calculate left mutation of support $\tau$-tilting modules by exchange sequences.
\begin{thm}\label{thm:Bongartz}
Assume that $\la$ is a finite dimensional $k$-algebra. Let $M_{\la}=X_\la\oplus U_\la$ be a basic $\tau$-tilting $\la$-module which is the Bongartz completion of $U_\la$, where $X_\la$ is indecomposable. Let
\begin{equation}\label{equi:7}
X_\la \s{g_\la} \longrightarrow U'_\la \s{f_\la} \longrightarrow Y_\la \longrightarrow 0   \tag{$\star\star\star$}
\end{equation}
be an exact sequence, where $g_\la$ is a minimal left $(\add U_\la)$-approximation. Then we have the following.
\begin{enumerate}[(a)]
\item If $U_\la$ is not sincere, then $Y_\la=0$. In this case $U_\la=\mu_{X_\la}^L(M_\la)$ holds and this is a basic support $\tau$-tilting $\la$-module which is not $\tau$-tilting.
\item If $U_\la$ is sincere, then $Y_\la$ is a direct sum of copies of an indecomposable $\la$-module $Y_1$ and is not in $\add M_\la$. In this case $Y_1\oplus U_\la=\mu_{X_\la}^L(M_\la)$ holds and this is a basic $\tau$-tilting $\la$-module.
\end{enumerate}
\end{thm}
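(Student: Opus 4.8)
The plan is to transport the statement of Theorem~\ref{thm:Bongartz} across the bijection $\widetilde{(-)}$ of Theorem~\ref{thm:mainthm}(c) and to identify the exchange sequence~(\ref{equi:7}) in $\mod\la$ with one of the exchange triangles (\ref{equi:5}) or (\ref{equi:6}) of Theorem~\ref{thm:mutation}. First I would set $M = U\oplus X$ to be the basic $T[1]$-cluster tilting object in $\D$ with $\widetilde{M} = M_\la = X_\la \oplus U_\la$ and $\overline{X'} = X_\la$, $\overline{U'} = U_\la$; since $M_\la$ is $\tau$-tilting, $M$ has no direct summand in $\add T[1]$, so $M\in T[1]\text{-}{\rm tilt}_T\D$ and $\overline{(-)}$ identifies $M\ast[T[1]]$ with $\Fac\overline{M'}$ via Lemma~\ref{lem:suffnece}. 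Because $M_\la$ is the Bongartz completion of $U_\la$, Proposition~\ref{prop:Boncriterion} gives $M_\la > \mu_{X_\la}(M_\la)$ and $X_\la\notin\Fac U_\la$; by Lemma~\ref{lem:bigandsmall} (together with Lemma~\ref{lem:suffnece}) this translates to $X\notin U\ast[T[1]]$, so we are in case~(2) of Theorem~\ref{thm:mutation}. Hence there is an exchange triangle $X \xrightarrow{g} U_2 \xrightarrow{f} Y \xrightarrow{h} X[1]$ with $g$ a minimal left $(\add U)$-approximation, $Y$ another complement to $U$, and $U\oplus Y < M$; and $N := U\oplus Y = \mu_X^L(M)$, so $\widetilde{N} = \mu_{X_\la}^L(M_\la)$.

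Next I would apply $\overline{(-)} = \Hom_\D(T,-)$ to the triangle from Theorem~\ref{thm:mutation}(2). This yields an exact sequence in $\mod\la$
\[
\overline{X'} \xrightarrow{\overline{g}} \overline{U_2} \xrightarrow{\overline{f}} \overline{Y'}\oplus\overline{Y''[1]}_{?} \longrightarrow \cdots
\]
and the task is to show that, after discarding the (possible) $\add T[1]$-summand of $Y$, the induced map $\overline{X'}\to\overline{U_2}$ is a minimal left $(\add \overline{U'})$-approximation and its cokernel is $\overline{Y'}$, i.e.\ that~(\ref{equi:7}) is obtained from~(\ref{equi:6}) by applying $\overline{(-)}$. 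The key points here are: $\overline{(-)}$ sends $\add U$ onto $\add\overline{U'}$ and is full with kernel $[T[1]]$, so a left $(\add U)$-approximation becomes a left $(\add\overline{U'})$-approximation; minimality is preserved because an idempotent splitting of $\overline g$ would lift (the equivalence $\D/[T[1]]\simeq\mod\la$ reflects isomorphisms and the relevant radical conditions) to a splitting of $g$, contradicting its minimality in $\D$; and the fact that $h\colon Y\to X[1]$ need not vanish is harmless because $\overline{X[1]}$ could be nonzero only through the $T[1]$-part, and one checks $\Im\overline h = 0$ using that $h$ measures the obstruction which becomes the cokernel. Then part~(a) of Theorem~\ref{thm:Bongartz} (the non-sincere case) corresponds to $Y\in\add T[1]$, equivalently $\overline{Y'} = 0$ and $Y_\la = 0$, so $N = \mu_{X}^L(M)$ has $\overline{N'}=\overline{U'}$, which is a support $\tau$-tilting but not $\tau$-tilting $\la$-module; and part~(b) (the sincere case) corresponds to $\overline{Y'}\neq 0$, and Lemma~\ref{lem:complement} forces $Y$ (hence $\overline{Y'}$, since $Y\notin\add T[1]$ now) to be indecomposable and not in $\add M$, whence $Y_\la$ is a direct sum of copies of one indecomposable $Y_1$ not in $\add M_\la$ and $Y_1\oplus U_\la = \mu_{X_\la}^L(M_\la)$ is $\tau$-tilting.

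I expect the main obstacle to be the bookkeeping around the $\add T[1]$-summand of $Y$ and the passage between the two-term world ($\mod\la$, where~(\ref{equi:7}) is an honest short exact sequence ending in $0$) and the triangulated world (where the triangle~(\ref{equi:6}) has a possibly nonzero connecting map $h$). Concretely, I must verify that $\overline{f}\colon\overline{U_2}\to\overline{Y'}$ is surjective — this is the statement $\Im\overline h = 0$, which should follow from Theorem~\ref{cto} and Theorem~\ref{FT} by the same factoring-through-$\add T$ argument used in Lemma~\ref{lem:suffnece}(1) and Lemma~\ref{lem:facthr} — and that no spurious summands of $\overline{Y'}$ appear, i.e.\ that $\overline{Y''[1]}$ really is the whole $\add T[1]$-part. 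Once surjectivity and minimality are in place, the matching of~(\ref{equi:7}) with the image of~(\ref{equi:6}) is forced by uniqueness of minimal approximations and their cokernels, and the two cases~(a),~(b) then read off directly from Theorem~\ref{thm:mutation}(2) combined with Lemma~\ref{lem:complement}. A secondary, purely formal point is to confirm that the hypothesis ``$M_\la$ is the Bongartz completion of $U_\la$'' is exactly what places us in case~(2) rather than case~(1) of Theorem~\ref{thm:mutation}; this is handled by Proposition~\ref{prop:Boncriterion} as indicated above.
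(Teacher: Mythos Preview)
The paper does not prove Theorem~\ref{thm:Bongartz} at all: it is quoted verbatim from \cite{AIR} as background (``In \cite{AIR}, the authors gave the following result\ldots''), and the paper then uses it to motivate Question~\ref{que:air} and to prove Theorem~\ref{thm:triangletosequence} and Corollary~\ref{cor:ques}. So there is no ``paper's own proof'' to compare against; the original proof lives entirely in \cite{AIR} and is purely module-theoretic.

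Your proposal has a genuine gap at the very first step. Theorem~\ref{thm:Bongartz} is stated for an \emph{arbitrary} finite dimensional $k$-algebra $\la$, but your argument begins by choosing a triangulated category $\D$ with Serre functor and a cluster-tilting object $T$ such that $\la=\End^{op}_\D(T)$. No such $\D$ and $T$ exist in general, and the paper is explicit about this: Corollary~\ref{cor:ques} is advertised only as a \emph{partial} answer to Question~\ref{que:air}, precisely because the triangulated machinery applies only ``when $\la$ is an endomorphism algebra of a cluster-tilting object''. So your approach cannot establish the theorem as stated; at best it reproves the special case that the paper itself treats in Theorem~\ref{thm:triangletosequence} (and there your outline is essentially the paper's argument: lift via Theorem~\ref{thm:mainthm}(c), use Proposition~\ref{prop:Boncriterion} and Lemma~\ref{lem:suffnece} to land in case~(2) of Theorem~\ref{thm:mutation}, apply $\overline{(-)}$, and check minimality and surjectivity). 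Note also a circularity risk: the paper's Theorem~\ref{thm:mutation} relies, through Corollary~\ref{cor:maincor1}(3) and Lemma~\ref{lem:bigandsmall}, on the $\tau$-tilting results of \cite{AIR}, so using it to ``prove'' Theorem~\ref{thm:Bongartz} would not be an independent argument even in the special case.
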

Furthermore, the authors posed the following question.
\begin{que}\label{que:air}
 Is $Y{_\la}$ always indecomposable in Theorem \ref{thm:Bongartz}(b)?
\end{que}
In this subsection,  we give a positive answer to this question when $\la$ is an endomorphism algebra of a cluster-tilting object. More precisely, there is a cluster-tilting object $T$ in a triangulated category $\D$ with a Serre functor $\mathbb{S}$ such that $\la=\End^{op}_{\D}(T)$.
\medskip

Since $M_{\la}=X_\la\oplus U_\la$ is a $\tau$-tilting $\la$-module, we know there exists a $T[1]$-cluster tilting object $M=X\oplus U$ in $\D$ such that $\overline{X}=X_\la$ and $\overline{U}=U_\la$ by Theorem \ref{thm:mainthm}.
Note that $M_{\la}=X_\la\oplus U_\la$ is the Bongartz completion of $U_\la$. By Proposition \ref{prop:Boncriterion}, we know $X_\la\notin \Fac U_\la$. By Lemma \ref{lem:suffnece}, we have
$X\notin U\ast[T[1]]$. Thus we can use the triangle (\ref{equi:6}) to obtain another complement $Y$ to $U$. Our main result of this subsection is the following.

\begin{thm}\label{thm:triangletosequence}
The exact sequence (\ref{equi:7}) in $\mod\la$ is induced from the triangle (\ref{equi:6}) in $\D$.
\end{thm}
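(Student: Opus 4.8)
The plan is to apply the functor $\overline{(-)}=\Hom_\D(T,-)$ to the exchange triangle (\ref{equi:6}) and to check that the resulting long exact $\Hom_\D(T,-)$-sequence collapses to precisely the four-term exact sequence (\ref{equi:7}). Recall the set-up: $M_\la=X_\la\oplus U_\la$ is the Bongartz completion of $U_\la$, so $X_\la\notin\Fac U_\la$ by Proposition \ref{prop:Boncriterion}, hence $X\notin U\ast[T[1]]$ by Lemma \ref{lem:suffnece}; thus we are in case (2) of Theorem \ref{thm:mutation}, the new complement $Y$ satisfies $Y\in U\ast[T[1]]$, and $U\oplus Y<M$ (Lemma \ref{lem:bigandsmall}).

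\textbf{Collapsing the sequence.} The first point I would establish is that the connecting morphism $h\colon Y\to X[1]$ in (\ref{equi:6}) factors through $\add T[1]$. By the construction in the proof of Theorem \ref{thm:mutation}(2), the triangle (\ref{equi:6}) is exactly the one produced by applying Theorem \ref{thm:mutation}(1) to the $T[1]$-cluster tilting object $U\oplus Y$ and its indecomposable summand $Y\in U\ast[T[1]]$ (whose other complement is $X$, by Corollary \ref{cor:maincor1}(3)); so Lemma \ref{lem:facthr} applies and gives that $h$ factors through $\add T[1]$. Consequently $\overline{h}=0$, and applying $\Hom_\D(T,-)$ to (\ref{equi:6}) degenerates to an exact sequence $\overline{X}\xrightarrow{\;\overline{g}\;}\overline{U_2}\xrightarrow{\;\overline{f}\;}\overline{Y}\longrightarrow 0$ in $\mod\la$, with $\overline{X}=X_\la$ and $\overline{U_2}\in\add U_\la$ (since $U_2\in\add U$).

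\textbf{Identifying $\overline{g}$ with $g_\la$.} Since $\overline{(-)}\colon\D\to\mod\la$ is full (Theorem \ref{cto}) and $g$ is a left $(\add U)$-approximation in $\D$ (Theorem \ref{thm:mutation}(2)), $\overline{g}$ is a left $(\add U_\la)$-approximation of $X_\la$: any $\overline{a}\colon X_\la\to\overline{U_0}$ lifts to $a\colon X\to U_0$ in $\D$, which factors through $g$, hence $\overline{a}$ factors through $\overline{g}$. By the standard comparison of left approximations, the minimal one $g_\la$ is a direct summand of $\overline{g}$: one may write $\overline{g}=\binom{g_\la}{0}\colon X_\la\to U'_\la\oplus W$ with $W\in\add U_\la$, so that $\coker\overline{g}\cong Y_\la\oplus W$; on the other hand the exact sequence above gives $\coker\overline{g}\cong\overline{Y}$. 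Now $Y$ is an indecomposable complement, so $\overline{Y}$ is either $0$ or indecomposable, and since $W\in\add U_\la$ while $Y\notin\add U$, a short case check (with $\overline{Y}=0$ exactly in case (a), and $Y_\la\neq 0$, $\overline{Y}$ indecomposable in case (b)) forces $W=0$ and $Y_\la\cong\overline{Y}$. Hence $\overline{U_2}\cong U'_\la$, $\overline{g}\cong g_\la$, $\overline{f}$ is identified with the cokernel map $f_\la$, and the exact sequence becomes $X_\la\xrightarrow{g_\la}U'_\la\xrightarrow{f_\la}Y_\la\longrightarrow 0$, i.e. (\ref{equi:7}) is induced from (\ref{equi:6}). (As a by-product $Y_\la\cong\overline{Y}$ is indecomposable, which is what is needed for the application to Question \ref{que:air}.)

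\textbf{Main obstacle.} The delicate step is transferring minimality of the left approximation across the localization $\overline{(-)}$: because $\overline{(-)}$ kills $[T[1]]$ rather than being faithful, a minimal left $(\add U)$-approximation in $\D$ need not map to a minimal one in $\mod\la$. The argument above avoids a direct comparison by computing $\coker\overline{g}$ in two ways and using indecomposability of $\overline{Y}$; the point that genuinely requires care is $\overline{h}=0$, i.e. that $h$ really factors through $\add T[1]$, which is where the symmetry between the two cases of Theorem \ref{thm:mutation} (via Lemma \ref{lem:facthr}) is used.
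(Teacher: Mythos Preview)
Your proposal is correct and follows essentially the same route as the paper. Both arguments apply $\overline{(-)}$ to the triangle (\ref{equi:6}), use Lemma~\ref{lem:facthr} (via the role-swap $X\leftrightarrow Y$ established in the proof of Theorem~\ref{thm:mutation}(2)) to get $\overline{h}=0$, observe that $\overline{g}$ is a left $(\add U_\la)$-approximation, and then establish minimality using that $Y$ is indecomposable and $Y\notin\add U$. The only cosmetic difference is that the paper proves minimality of $\overline{g}$ directly by contradiction (a nonzero split-off summand of $\overline{U_2}$ would land in $\add U_\la\cap\add\overline{Y}$), whereas you decompose $\overline{g}=\binom{g_\la}{0}$ and compare $\coker\overline{g}$ with $\overline{Y}$ to force $W=0$; these are the same argument unpacked differently.
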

\begin{proof}
Applying $\overline{(\ )}$ to (\ref{equi:6}) and using Lemma \ref{lem:facthr}, we have an exact sequence
\begin{equation}\label{equi:8}
\overline{X} \s{\overline{g}}\longrightarrow \overline{U_2} \s{\overline{f}}\longrightarrow \overline{Y} \longrightarrow 0.
\end{equation}
Since $g$ is a left $(\add U)$-approximation, we know that $\overline{g}$ is a left $(\add U_\la)$-approximation. Now we show that $\overline{g}$ is a left minimal map. If this were not true, then there would be a decomposition $\overline{U_2}=W_{1}\oplus W_{2}$ and an exact sequence
 $$\overline{X} \s{\overline{g}=\binom{g_0}{0}}\longrightarrow W_1\oplus W_2 \s{\overline{f}}\longrightarrow \overline{Y} \longrightarrow 0$$ in $\mod\la$. Thus $W_{2}$ would be a direct summand of $\overline{Y}$. Since $Y\notin \add U$, we know this is a contradiction and our claim follows. Hence the exact sequences (\ref{equi:8}) and (\ref{equi:7}) are coincident.
\end{proof}
The following consequence is direct, which gives a partial anwser to Question \ref{que:air}.
\begin{cor}\label{cor:ques}
Let $\D, \la$ be as above. Then $Y_\la$ is always indecomposable in Theorem \ref{thm:Bongartz}(b).
\end{cor}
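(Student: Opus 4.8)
The plan is to read off the result as a direct consequence of Theorem \ref{thm:triangletosequence} together with the indecomposability statement established in the mutation theorem. First I would recall the setup: since $M_\la = X_\la \oplus U_\la$ is a basic $\tau$-tilting $\la$-module which is the Bongartz completion of $U_\la$, Proposition \ref{prop:Boncriterion} gives $X_\la \notin \Fac U_\la$; lifting via Theorem \ref{thm:mainthm} we obtain a basic $T[1]$-cluster tilting object $M = X \oplus U$ in $\D$ with $\overline{X} = X_\la$, $\overline{U} = U_\la$, and by Lemma \ref{lem:suffnece} the condition $X_\la \notin \Fac U_\la$ translates into $X \notin U\ast[T[1]]$. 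This places us squarely in case (2) of Theorem \ref{thm:mutation}, so we may form the exchange triangle $(\star\star)$, i.e. (\ref{equi:6}), with $g$ a minimal left $(\add U)$-approximation, and the theorem tells us that the resulting complement $Y$ is indecomposable (this is exactly Lemma \ref{lem:complement} applied to the dual situation, or is part of the conclusion of Theorem \ref{thm:mutation}(2)).

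The second step is to invoke Theorem \ref{thm:triangletosequence}, which identifies the exact sequence (\ref{equi:7}) defining $Y_\la$ with the sequence (\ref{equi:8}) obtained by applying $\overline{(\ )} = \Hom_\D(T,-)$ to the triangle (\ref{equi:6}). Concretely, (\ref{equi:8}) reads
\[
\overline{X} \xrightarrow{\ \overline{g}\ } \overline{U_2} \xrightarrow{\ \overline{f}\ } \overline{Y} \longrightarrow 0,
\]
and the proof of Theorem \ref{thm:triangletosequence} checks that $\overline{g}$ is a minimal left $(\add U_\la)$-approximation, so this coincides with (\ref{equi:7}) and hence $Y_\la = \overline{Y}$. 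Since $Y$ is indecomposable in $\D$ and $Y \notin \add T[1]$ (it is a complement to the almost $T[1]$-cluster tilting object $U$, so it is not a summand of $M$ and in particular the lifted complement is not in $\add T[1]$ in the relevant sense), the functor $\overline{(\ )}$, being the equivalence $\D/[T[1]] \xrightarrow{\sim} \mod\la$ of Theorem \ref{cto}, sends the indecomposable $Y$ to an indecomposable $\la$-module $\overline{Y} = Y_\la$. Therefore $Y_\la$ is indecomposable, which is precisely the assertion.

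The only point requiring a little care — and the step I expect to be the main (minor) obstacle — is checking that $\overline{Y}$ is genuinely indecomposable in $\mod\la$ rather than merely indecomposable up to summands in $\add T[1]$: one must verify that $Y$ has no nonzero direct summand in $\add T[1]$, so that the equivalence $\D/[T[1]] \simeq \mod\la$ does not collapse a piece of $Y$ to zero. This follows because $Y$ is a complement to $U$ in a $T[1]$-cluster tilting object $U \oplus Y$, and summands in $\add T[1]$ would correspond, under the bijection $\widetilde{(-)}$ of (\ref{bijection}), to the projective part of the support $\tau$-tilting pair rather than to the module part $Y_\la$; since $Y_\la \neq 0$ (as $U_\la$ is sincere in case (b) of Theorem \ref{thm:Bongartz}, so $Y_\la$ is a nonzero direct sum of copies of an indecomposable), $Y$ is an honest object outside $\add T[1]$ and the equivalence preserves its indecomposability. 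Assembling these observations gives the corollary with essentially no further computation.
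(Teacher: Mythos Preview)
Your proposal is correct and follows exactly the line the paper has in mind: the paper calls the corollary a ``direct consequence'' of Theorem~\ref{thm:triangletosequence} together with the indecomposability of $Y$ from Theorem~\ref{thm:mutation}(2), and you have spelled out precisely those ingredients. The one point you handle more carefully than the paper is the verification that $Y\notin\add T[1]$ (so that the equivalence $\D/[T[1]]\simeq\mod\la$ preserves indecomposability), which is indeed needed but is left implicit in the paper.
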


\section{Examples}\label{sect:5}
\bigskip
\begin{exm}\label{exm:1}
{\upshape Let $A=kQ/I$ be a self-injective algebra given by the quiver $$Q:
\setlength{\unitlength}{0.03in}\xymatrix{1 \ar@<0.5ex>[r]^{\alpha}_{\ } & 2\ar@<0.5ex>[l]^{\beta}}$$ and $I=<\alpha\beta\alpha\beta, \beta\alpha\beta\alpha>$. Let $\D$ be the stable module category
${\rm \underline{\mod}}A$ of $A$. This is a triangulated category with a Serre functor (it is not 2-CY). We describe the AR-quiver of $\mod A$ in the following picture:}
\end{exm}
\vspace{-2ex}
\begin{center}
$
\xymatrix@!@C=0.01cm@R=0.2cm{
&\txt{1\\2\\1\\2}\ar[dr]&&\txt{2\\1\\2\\1}\ar[dr]&\\
*+[o][F-]{\txt{2\\1\\2}}\ar[ur]\ar[dr]\ar@{--}[u]&&\txt{1\\2\\1}\ar@{.>}[ll]\ar[ur]\ar[dr]&&\txt{2\\1\\2}\ar@{.>}[ll]\ar@{--}[u]\\
&\txt{2\\1}\ar[dr]\ar[ur]&&\txt{1\\2}\ar[dr]\ar[ur]\ar@{.>}[ll]&\\
 1\ar[ur]\ar@{--}[uu]&&*+[o][F-]{2}\ar@{.>}[ll]\ar[ur]&&1\ar@{.>}[ll]\ar@{--}[uu] }
$

\medskip

Figure 5
\end{center}
where the leftmost and rightmost columns are identified. Thus, we also get the AR-quiver of ${\rm \underline{\mod}}A$ by deleting the first row in
 Figure 5.{\setlength{\jot}{-3pt}
The direct sum $T=2\oplus \begin{aligned}2\\1\\2\end{aligned}$ is a cluster-tilting object in $\D$.} The opposite algebra of endomorphism algebra $\la=\End^{op}_{\D}(T)=kQ'/I'$ is given by the quiver $Q'$:
$\xymatrix{{\rm a} \ar@<0.5ex>[r]^{\gamma}_{\ } & {\rm b}\ar@<0.5ex>[l]^{\delta}}$ and $I'= <\gamma\delta, \delta\gamma>$. The AR-quiver of $\mod\la$ is
\begin{center}
$$
\xymatrix@!@C=0.01cm@R=0.5cm{
\txt{a\\b}\ar[dr]&&&&\txt{a\\b}\\
&\txt{a}\ar[dr]&&\txt{b}\ar[ur]\ar@{.>}[ll]\\
&&\txt{b\\a}\ar[ur]  }
$$%Figure 2
\end{center}
We depict $T[1]$-cluster tilting objects in $\D$ and support $\tau$-tilting modules in $\mod \la$ as follows (the encircled objects are cluster-tilting objects)
\begin{center}
\begin{tabular}{cc}
$
\xymatrix@!@C=0.6cm@R=0.6cm{
&*+[o][F-]{\txt{2\\1\\2}\oplus 2}\ar[dr]\ar[dl]& \\
\txt{1\\2}\oplus 2\ar[d]&& \txt{2\\1\\2}\oplus \txt{2\\1}\ar[d]  \\
\txt{1\\2}\oplus 1\ar[dr]&&\txt{2\\1}\oplus \txt{1\\2\\1}\ar[dl]\\
& *+[o][F-]{1\oplus \txt{1\\2\\1}} }
$
& \qquad\quad
$
\xymatrix@!@C=0.6cm@R=0.6cm{
&\txt{a\\b}\oplus\txt{b\\a}\ar[dr]\ar[dl]& \\
\txt{b\\a}\oplus \txt{b}\ar[d]&& \txt{a\\b}\oplus \txt{a}\ar[d]  \\
\txt{b} \ar[dr]&&\txt{a}\ar[dl] \\
& \txt{0} }
$

   \\
$T[1]$-\text{tilt}$\D$  &   \text{\qquad\qquad s}$\tau$-\text{tilt}$\la$
\end{tabular}
\end{center}
\medskip

\begin{exm}\label{exm:2}
{\upshape Let $Q$ be the quiver $\setlength{\unitlength}{0.03in}\xymatrix{1 \ar[r]^{\alpha} & 2}.$
Assume that $\tau_Q$ is the Auslander-Reiten translation in $D^b(kQ)$. We consider the repetitive cluster category $\D=D^b(kQ)/\langle \tau_Q^{-2}[2]\rangle$ introduced by Zhu in \cite{Z11}, whose objects are the same in $D^b(kQ)$, and whose morphisms are given by $$\Hom_{D^b(kQ)/\langle \tau_Q^{-2}[2]\rangle}(X, Y)=\bigoplus_{i\in \mathbb{Z}}\Hom_{D^b(kQ)}(X, (\tau_Q^{-2}[2])^iY).$$  }
\end{exm}
 It is shown in \cite{Z11} that $\D$ is a triangulated category with a Serre functor $\mathbb{S}$. Note that it is not 2-CY (but it is a fractional Calabi-Yau category with CY-dimension $\frac{4}{2}$). The AR-quiver of $\D$ is as follows:
\begin{center}\label{f:1}$
\xymatrix@!@C=0.5cm@R=1cm{  % @=0.01mm
&\txt{1\\2}\ar@{.}[l]\ar[dr]&&*+[o][F-]{2[1]}\ar@{.>}[ll]\ar[dr]&&1[1]\ar@{.>}[ll]\ar[dr]&&*+[o][F-]{\txt{1\\2}[2]}\ar@{.>}[ll]\ar[dr]&&2[3]\ar@{.>}[ll]\ar[dr]&&\txt{1\\2}\ar@{.>}[ll]  \\
2\ar[ur] &&*+[o][F-]{1}\ar@{.>}[ll] \ar[ur]&&\txt{1\\2}[1]\ar@{.>}[ll]\ar[ur] && 2[2]\ar@{.>}[ll] \ar[ur]&&*+[o][F-]{1[2]}\ar@{.>}[ll] \ar[ur]&&2 \ar@{.>}[ll]\ar[ur]  }$

Figure 6
\end{center}
{\setlength{\jot}{-3pt}
 The direct sum $T=1\oplus 2[1]\oplus \begin{aligned}1\\2\end{aligned}[2]\oplus 1[2]$} of the encircled indecomposable objects gives a cluster-tilting object. Note that the opposite algebra of the endomorphism algebra $\la=\End^{op}_{\D}(T)$ is not connected, it is given by the following disconnected quiver: $$\xymatrix{{a} \ar[r] & {b}} \qquad \xymatrix{{c} \ar[r] & {d}}$$ with no relations.
The AR-quiver of $\mod\la$ is
\begin{center}\label{f:3}
$
\xymatrix@!@C=0.5cm@R=1cm{  % @=0.01mm
&\txt{a\\b}\ar[dr]&&&&{d} \ar[dr]&&{c}\ar@{.>}[ll]\\  %1
{b}\ar[ur]&&{a}\ar@{.>}[ll] &&&&\txt{c\\d} \ar[ur] }$
\end{center}
We use the following picture to describe $T[1]$-cluster tilting objects in $\D$ and support $\tau$-tilting modules in $\mod \la$.
\vspace{-2ex}
\begin{center}
\scalebox{0.13}{\includegraphics{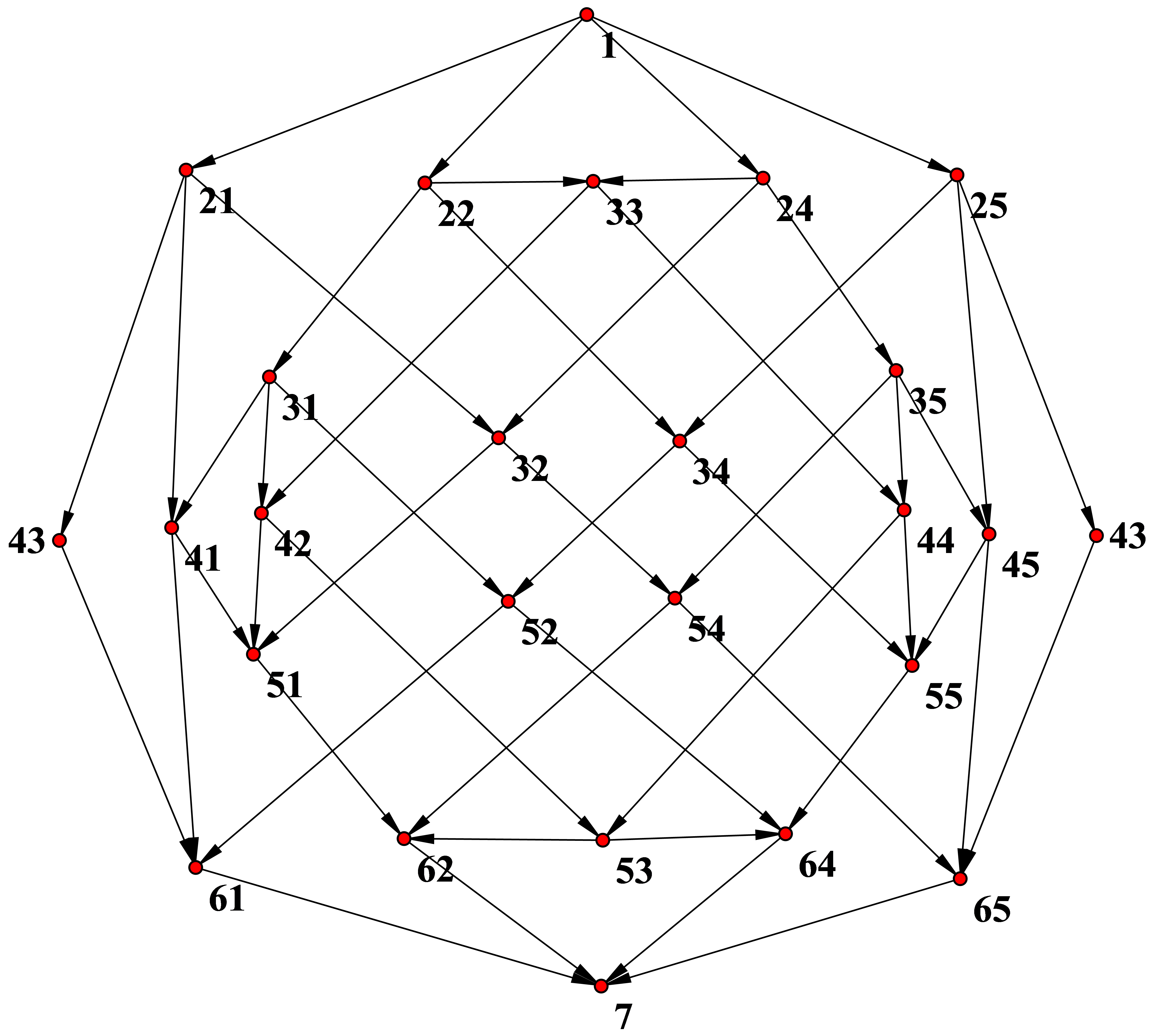}},
\end{center}
where the leftmost and rightmost points are identified, and the cluster-tilting objects are marked by $\clubsuit$.
{\setlength{\jot}{-3pt}

\begin{center}
\begin{tabular}{|c|c|c|}
\hline
Points & $T[1]$-cluster tilting objects & support $\tau$-tilting modules \\\hline
1$\clubsuit$      & $\begin{aligned}1\\2\end{aligned}[2]\oplus 1[2]\oplus 1\oplus 2[1]$ & $\la$\\\hline
21      & $\begin{aligned}1\\2\end{aligned}[2]\oplus 1[2]\oplus 1\oplus 2[2]$ & $\begin{aligned}c\\d\end{aligned}\oplus d\oplus b$\\\hline
22      & $\begin{aligned}1\\2\end{aligned}[2]\oplus 1[2]\oplus \begin{aligned}1\\2\end{aligned}[1]\oplus 2[1]$ & $\begin{aligned}c\\d\end{aligned}\oplus d\oplus a\oplus \begin{aligned}a\\b\end{aligned}$\\\hline
24      & $2[3]\oplus 1[2]\oplus 1\oplus 2[1]$ & $\begin{aligned}c\\d\end{aligned}\oplus c\oplus \begin{aligned}a\\b\end{aligned}\oplus b$\\\hline
25      & $\begin{aligned}1\\2\end{aligned}[2]\oplus \begin{aligned}1\\2\end{aligned}\oplus 1\oplus 2[1]$ & $\begin{aligned}a\\b\end{aligned}\oplus b \oplus d$\\\hline
31      & $\begin{aligned}1\\2\end{aligned}[1]\oplus 1[2]\oplus 1[1]\oplus \begin{aligned}1\\2\end{aligned}[2]$ & $\begin{aligned}c\\d\end{aligned}\oplus d\oplus a$\\\hline
32      & $2[2]\oplus 1[2]\oplus 1\oplus 2[3]$ & $\begin{aligned}c\\d\end{aligned}\oplus b\oplus c$\\\hline
33$\clubsuit$      & $\begin{aligned}1\\2\end{aligned}[1]\oplus 1[2]\oplus 2[3]\oplus 2[1]$ & $\begin{aligned}c\\d\end{aligned}\oplus c \oplus \begin{aligned}a\\b\end{aligned}\oplus a$\\\hline
34      & $\begin{aligned}1\\2\end{aligned}[1]\oplus \begin{aligned}1\\2\end{aligned}[2]\oplus \begin{aligned}1\\2\end{aligned}\oplus 2[1]$ & $\begin{aligned}a\\b\end{aligned}\oplus a\oplus d$\\\hline
35      & $1\oplus 2[1]\oplus 2\oplus 2[3]$ & $\begin{aligned}a\\b\end{aligned}\oplus b\oplus c$\\\hline
41      & $\begin{aligned}1\\2\end{aligned}[2]\oplus 1[2]\oplus 1[1]\oplus 2[2]$ & $\begin{aligned}c\\d\end{aligned}\oplus d$\\\hline
42      & $\begin{aligned}1\\2\end{aligned}[1]\oplus 1[2]\oplus 1[1]\oplus 2[3]$ & $\begin{aligned}c\\d\end{aligned}\oplus a\oplus c$\\\hline

43$\clubsuit$      & $\begin{aligned}1\\2\end{aligned}[2]\oplus 2[2]\oplus 1\oplus \begin{aligned}1\\2\end{aligned}$ & $b\oplus d$\\\hline

44      & $\begin{aligned}1\\2\end{aligned}[1]\oplus 2[3]\oplus 2\oplus 2[1]$ & $\begin{aligned}a\\b\end{aligned}\oplus a\oplus c$\\\hline

45      & $\begin{aligned}1\\2\end{aligned}\oplus 2[1]\oplus 1\oplus 2$ & $\begin{aligned}a\\b\end{aligned}\oplus b$\\\hline

51      & $2[3]\oplus 1[2]\oplus 1[1]\oplus 2[2]$ & $\begin{aligned}c\\d\end{aligned}\oplus c$\\\hline

52      & $\begin{aligned}1\\2\end{aligned}\oplus \begin{aligned}1\\2\end{aligned}[1]\oplus \begin{aligned}1\\2\end{aligned}[2]\oplus 1[1]$ & $a\oplus d$\\\hline

53$\clubsuit$      & $\begin{aligned}1\\2\end{aligned}[1]\oplus 1[1]\oplus 2\oplus 2[3]$ & $a\oplus c$\\\hline

54      & $2\oplus 2[2]\oplus 1\oplus 2[3]$ & $b\oplus c$\\\hline

55      & $\begin{aligned}1\\2\end{aligned}[1]\oplus 2[1]\oplus 2\oplus \begin{aligned}1\\2\end{aligned}$ & $\begin{aligned}a\\b\end{aligned}\oplus a$\\\hline

61      & $\begin{aligned}1\\2\end{aligned}[2]\oplus 1[1]\oplus \begin{aligned}1\\2\end{aligned}\oplus 2[2]$ & $d$\\\hline

62      & $1[1]\oplus 2[2]\oplus 2[3]\oplus 2$ & $c$\\\hline

64      & $\begin{aligned}1\\2\end{aligned}[1]\oplus 1[1]\oplus 2\oplus \begin{aligned}1\\2\end{aligned}$ & $a$\\\hline

65      & $\begin{aligned}1\\2\end{aligned}\oplus 1\oplus 2\oplus 2[2]$ & $b$\\\hline

7$\clubsuit$      & $\begin{aligned}1\\2\end{aligned}\oplus 1[1]\oplus 2\oplus 2[2]$ & $0$\\\hline
\end{tabular}
\end{center}}
\bigskip

{\bf Acknowledgments}

\medskip

The first author would like to thank Wen Chang, Dong Yang, Jie Zhang and Yu Zhou for helpful discussions on the subject. Both authors are grateful to the referee for the valuable comments
and suggestions.

\bigskip

School of Mathematics, Northwest University, Xi'’an 710127, Shaanxi, P. R. China

\textit{E-mail address: }yangwz@nwu.edu.cn

\medskip

Department of Mathematical Sciences, Tsinghua University, 100084, Beijing, P. R. China

\textit{E-mail address: }bzhu@math.tsinghua.edu.cn

\end{document}